\DeclareMathAlphabet{\mathpzc}{OT1}{pzc}{m}{it}
\newtheorem{theorem}{Theorem}[section]
\newtheorem{proposition}[theorem]{Proposition}
\newtheorem{corollary}[theorem]{Corollary}
\newtheorem{lemma}[theorem]{Lemma}
\theoremstyle{definition}
\newtheorem{definition}[theorem]{Definition}
\theoremstyle{remark}
\def\varle{\leqslant}
\newcommand{\CI}{{\mathcal I}}
\newcommand{\CJ}{{\mathcal J}}
\newcommand{\CO}{{\mathcal O}}
\newcommand{\CW}{{\mathcal W}}
\newcommand{\CZ}{{\mathcal Z}}
\newcommand{\fh}{{{\mathfrak h}}} 
\newcommand{\fp}{{{\mathfrak p}}} 
\newcommand{\fg}{{{\mathfrak g}}} 
\newcommand{\fb}{{{\mathfrak b}}} 
\newcommand{\fl}{{{\mathfrak l}}} 
\newcommand{\fm}{{{\mathfrak m}}}
\newcommand{\fhd}{\fh^\star}
\newcommand{\hCW}{{\widehat\CW}}
\newcommand{\hfh}{{\widehat\fh}}
\newcommand{\hfg}{{\widehat\fg}}
\newcommand{\hfb}{{\widehat\fb}}
\newcommand{\hS}{{\widehat S}}
\newcommand{\hR}{{\widehat R}}
\newcommand{\hfhd}{\widehat{\fh}^\star}
\newcommand{\tS}{{\widetilde{S}}}
\newcommand{\tQ}{{\widetilde{Q}}}
\newcommand{\tfg}{{\widetilde{\fg}}}
\newcommand{\DC}{{\mathbb C}}
\newcommand{\DZ}{{\mathbb Z}}
\newcommand{\DK}{{\mathbb K}}
\newcommand{\DN}{{\mathbb N}}
\newcommand{\Ext}{{\operatorname{Ext}}}
\newcommand{\Hom}{{\operatorname{Hom}}}
\newcommand{\catmod}{{\operatorname{-mod}}}
\DeclareMathOperator{\cha}{\mathrm{ch}}
\newcommand{\im}{{\operatorname{im}}}
\newcommand{\rk}{{{\operatorname{rk}}}}
\newcommand{\ol}{\overline}
\newcommand{\ul}{\underline}
\newcommand{\id}{{\operatorname{id}}}
\newcommand{\res}{{\operatorname{res}}}
\newcommand{\re}{{\operatorname{re}}}
\newcommand{\crit}{{\operatorname{crit}}}
\newcommand{\GL}{{\operatorname{GL}}}
\newcommand{\rCO}{{\ol\CO}}
\newcommand{\rP}{{\ol P}}
\newcommand{\rnabla}{{\ul\nabla}}
\newcommand{\rDelta}{{\ol\Delta}}
\newcommand{\comment}[1]{}
\begin{document}

\pagenumbering{arabic}
\title[Restricted Linkage principle]{The linkage principle for restricted critical level representations
  of affine Kac--Moody algebras}
\author{Tomoyuki Arakawa, Peter Fiebig}
\thanks{T.A. is partially supported 
by the JSPS Grant-in-Aid  for Scientific Research (B)
No.\ 20340007.\\
\indent P.F. is partially supported by a grant of the Landesstiftung Baden--W\"urttemberg and by the DFG-Schwerpunkt 1388}
\begin{abstract} We study the restricted category $\CO$ for an affine Kac--Moody algebra at the critical level. In particular, we prove the first part of the Feigin--Frenkel conjecture: the linkage principle for restricted Verma modules. Moreover, we prove a version of the BGG-reciprocity principle and we determine the  block decomposition of the restricted category $\CO$. For the proofs we need a deformed version of the classical structures, so we mostly work in a relative setting. 
\end{abstract}

\maketitle

\section{Introduction}
The representation theory of an affine Kac--Moody algebra at the
critical level is of central importance in the approach towards the
geometric Langlands program that was proposed by Edward Frenkel and
Dennis Gaitsgory in \cite{FG06}. While there is already a good
knowledge on the connection between critical level representations and
the geometry of the associated affine Grassmannian, central problems,
for example the determination of the critical simple highest weight
characters, 
still remain open. In this paper we continue our approach towards a
description of the critical level category $\CO$, started in
the paper \cite{AF08}.

Let $\hfg$ be the affine Kac--Moody algebra associated to a finite dimensional, simple complex Lie algebra $\fg$ (for the specialists we point out that we add the derivation operator to the centrally extended loop algebra). We study the corresponding highest weight category  $\CO$.

The Lie algebra $\hfg$ has a one dimensional center and we let $K\in\hfg$ be one of its generators. The center acts semisimply on each object of $\CO$, so  $\CO$ decomposes according to the eigenvalue of the action of $K$. We say that an object $M$ of $\CO$ has level $k\in\DC$ if $K$ acts on $M$ as multiplication with $k$, and we let $\CO_k$ be the full subcategory  of $\CO$ that consists of all modules of level $k$.   There is one special value, $k=\crit$, which is called the {\em critical} level. It is the level of the simple highest weight module $L(- \rho)$, where $\rho$ is a Weyl vector, i.e.~a vector that takes the value $1$ on each simple affine coroot. In the usual normalization (see Section \ref{Sec-affKM}) it is $\crit=-h^\vee$, where $h^\vee$ is the dual Coxeter number.

For all levels $k\ne \crit$ the categorical structure of $\CO_k$ is well-known and admits a description in terms of the affine Hecke algebra associated to $\hfg$, in analogy to the case of the category $\CO$ for a finite dimensional simple complex Lie algebra  (cf.~ \cite{Fie06}). However, for $k=\crit$ the structure  changes drastically. In fact, Lusztig anticipated in his ICM address in 1990 that the representation theory at the critical level resembles the representation theory of a small quantum group or  a modular Lie algebra (cf.~ \cite{Lus91}). In particular, it should not be the affine Hecke algebra that governs the structure of $\CO_\crit$, but its periodic module. The Feigin--Frenkel conjecture on the simple critical characters (cf.~ \cite{AF08}) points in this direction as well.  So one might hope that there is a description of the critical level representation theory that closely resembles the one given for small quantum groups and modular Lie algebras by Andersen, Jantzen and Soergel in \cite{AJS94}. 

The main result in this paper is another step towards such a description (following the paper \cite{AF08}). We prove the restricted linkage principle, i.e.~ we show that a simple module occurs in a restricted Verma module only if their highest weights lie in the same orbit under the associated integral Weyl group. Moreover, we study restricted projective objects,  prove a BGG-reciprocity result and describe the corresponding block decomposition. Our results are in close analogy to the quantum group and the modular case, hence they  strongly support the above conjectures. \subsection{Acknowledgments:} We would like to thank Henning Haahr Andersen and Jens Carsten Jantzen for very motivating and inspiring discussions on the subject of this paper. We would also like to thank the Newton Institute in Cambridge for its hospitality during the program "Algebraic Lie Theory".

\section{Affine Kac--Moody algebras and the deformed category $\CO$}\label{Sec-affKM}

In this section we recall the construction of the deformed category
$\CO$ associated to an affine Kac--Moody algebra. Our main reference for the structure theory is \cite{Kac} and for the deformed representation theory it is
\cite{FieMZ}.

\subsection{Affine Kac--Moody algebras}
We fix a finite dimensional, complex, simple Lie algebra $\fg$ and denote by $\hfg$ the corresponding affine Kac--Moody algebra. As a vector space we have $\hfg=(\fg\otimes_\DC \DC[t,t^{-1}])\oplus\DC K\oplus\DC D$ and the Lie bracket is given by 
\begin{align*}
[K,\hfg] & = 0, \\
[D,x\otimes t^n] &  = n x\otimes t^n, \\
[x\otimes t^m, y\otimes t^n] & = [x,y]\otimes t^{m+n}+
m\delta_{m,-n}\kappa(x,y) K
\end{align*}
for $x,y\in \fg$, $m,n\in \DZ$. Here $\kappa\colon \fg\times\fg\to\DC$ denotes the Killing form for $\fg$.

Let us fix a Borel subalgebra $\fb\subset \fg$ and a Cartan subalgebra $\fh\subset\fg$ inside $\fb$.
The corresponding Cartan and Borel subalgebras of $\hfg$ are
\begin{align*}
\hfh &:=\fh\oplus\DC K\oplus\DC D\\
\hfb  &:= (\fg \otimes_\DC t\DC[t]+\fb\otimes_\DC \DC[t])\oplus \DC
K\oplus\DC D.
\end{align*}

\subsection{Roots of $\hfg$}

The decomposition $\hfh=\fh\oplus\DC K\oplus \DC D$ allows us to embed
$\fhd$ in $\hfhd$ using the map that is dual to the projection $\hfh\to\fh$. 
Let $\delta,\Lambda_0\in\hfhd$ be the elements dual to $D$ and $K$,
resp., with respect to the direct decomposition, so we have
$\delta(\fh\oplus\DC K)=\Lambda_0(\fh\oplus\DC D)=\{0\}$ and
$\delta(D)=\Lambda_0(K)=1$.
 Then $\hfhd=\fhd\oplus\DC \Lambda_0\oplus\DC \delta$.

Let $R\subset\fhd$ be the set of roots of $\fg$ with respect to
$\fh$ and $\fg=\fh\oplus\bigoplus_{\alpha\in R}\fg_\alpha$ the root space decomposition. The set of roots of $\hfg$ with respect to $\hfh$ then is
$\hR=\hR^{\re}\cup\hR^{\im}$, where
\begin{align*}
\hR^{\re}&=\{\alpha+n\delta\mid\alpha\in R, n\in\DZ\},\\
\hR^{\im}&=\{n\delta\mid
n\in\DZ,n\ne 0\}.
\end{align*}
The sets $\hR^{\re}$ and $\hR^{\im}$ are called the sets of real and
of imaginary roots, resp. 
The corresponding root spaces are
\begin{align*}
\hfg_{\alpha+n\delta}&=\fg_\alpha\otimes t^n,\\
\hfg_{n\delta}&=\fh\otimes t^n.
\end{align*}
The positive roots $\hR^+\subset\hR$ are the roots of $\hfb$. Explicitly, we have
$$
\hR^+=\{\alpha+n\delta\mid \alpha\in R, n>0\}\cup \{\alpha\mid
\alpha\in R^+\}\cup\{n\delta\mid n>0\},
$$
where $R^+\subset R$ denotes the roots of $\fb\subset\fg$. We set $\hR^{+,\re}:=\hR^{+}\cap\hR^{\re}$ and $\hR^{+,\im}:=\hR^{+}\cap\hR^{\im}$. We denote
by $\Pi\subset R$ the set of simple roots corresponding to our choice
of $\fb$. The set of
simple affine roots is 
$$
\widehat\Pi=\Pi\cup\{-\gamma+\delta\},
$$
where $\gamma\in R^+$ is the highest root.

\subsection{The Weyl group and the bilinear form}
To any real root $\alpha\in\hR^{\re}$ there is an associated coroot
$\alpha^\vee\in\hfh$ and a reflection  $s_\alpha\colon \hfhd\to\hfhd$ given by
$s_\alpha(\lambda)=\lambda-\langle\lambda,\alpha^\vee\rangle\alpha$.
The affine Weyl group associated to our data is the subgroup $\hCW$ of
$\GL(\hfhd)$ generated by the  $s_\alpha$ with
$\alpha\in\hR^{\re}$. 

We denote by $(\cdot,\cdot)\colon\hfg\times\hfg\to\DC$ the standard bilinear form that is non-degenerate, symmetric and invariant, i.e.~it satisfies
$([x,y],z)=(x,[y,z])$ for $x,y,z\in\hfg$. Its restriction to
$\hfh\times\hfh$ is non-degenerate as well and hence induces a
non-degenerate bilinear form on $\hfhd$ that we denote again by
$(\cdot,\cdot)$. It is explicitly given by the following formulas:
\begin{align*}
(\alpha,\beta) &= \kappa(\alpha,\beta),\\
(\Lambda_0,\fhd\oplus\DC\Lambda_0) &= 0,\\
(\delta,\fhd\oplus\DC\delta) &= 0,\\
(\Lambda_0,\delta) &= 1,
\end{align*}
for $\alpha,\beta\in\fhd$ (here we denote by $\kappa\colon \fhd\times\fhd\to\DC$ the bilinear form induced by the Killing form). Moreover, it is invariant under the
action of $\hCW$, i.e.~for $\lambda,\mu\in\hfhd$ and $w\in\hCW$ we
have
$$
(\lambda,\mu)=(w(\lambda),w(\mu)).
$$

\subsection{The deformed  category $\CO$}

Let $S:=S(\fh)$  and  $\hS:=S(\hfh)$ be the symmetric algebras over the complex vector spaces $\fh$ and $\hfh$. The projection $\hfh\to \fh$ along the decomposition $\hfh=\fh\oplus\DC K\oplus \DC D$ yields an algebra  homomorphism $\hS\to S$. We  think from now on of $S$ as an $\hS$-algebra via this homomorphism. 

Let $A$ be a commutative, associative, noetherian, unital
$S$-algebra. In the following we call such an algebra a {\em
  deformation algebra}. Using the homomorphism $\hS\to S$ from above
we can consider $A$ as an $\hS$-algebra as well. We denote by
$\tau\colon \hfh\to A$ the composition of the canonical map $\hfh\to
\hS$ with the structure homomorphism $\hS\to A$,  $f\mapsto f\cdot 1_A$. Note that $\tau(D)=\tau(K)=0$. 

For any complex Lie algebra $\fl$ we denote by $\fl_A:=\fl\otimes_\DC A$ the $A$-linear Lie algebra obtained from $\fl$ by base change. An  $\fl_A$-module is then an $A$-module  endowed with an  operation of $\fl$ that is $A$-linear. We denote by $U(\fl_A)$ the universal enveloping algebra of the $A$-Lie algebra $\fl_A$.

\begin{definition} Let  $M$ be a
  $\hfg_A$-module. 
\begin{enumerate}
\item We say that $M$ is a {\em weight module} if $M=\bigoplus_{\lambda\in\hfhd} M_\lambda$, where
$$
M_\lambda:=\left\{m\in M\mid H.m=(\lambda(H).1_A+\tau(H))m \text{ for all $H\in \hfh$}\right\}.
$$
We call $M_\lambda$ the {\em weight space} of $M$ corresponding to $\lambda$ (even though its weight is rather $\lambda+\tau$).

\item We say that $M$ is  {\em locally $\hfb_A$-finite}, if for each $m\in M$
  the space  $U(\hfb_A).m$ is a finitely generated $A$-module.
\end{enumerate}
We define $\CO_A$ as the full subcategory of  $\hfg_A\catmod$  that
consists of  locally $\hfb_A$-finite
weight modules.
\end{definition}
One checks easily that $\CO_A$ is an {\em abelian} subcategory of the category of all $\hfg_A$-modules. 
 In the following we write $\CO$ for the non-deformed category, i.e.~ 
for the category $\CO_\DC$ that is defined by giving $\DC$ the
structure of a deformation algebra by identifying it with $S/\fm S$,
where $\fm\subset S$ is the  ideal generated by $\fh\subset
S$. 

 Suppose that $A=\DK $ is a field. Then we can  consider $\hfh_\DK $ and $\hfb_\DK $  as Cartan and Borel subalgebras of $\hfg_\DK $. The $\DC$-linear map $\tau \colon \hfh\to \DK $ induces a $\DK $-linear map $\hfh_\DK \to \DK $ that we denote by $\tau$ as well and which we consider as an element in the dual space $\hfh_\DK ^\star=\Hom_\DK (\hfh_\DK ,\DK )$.  Moreover, each $\lambda\in \hfhd$ induces a $\DK $-linear map $\hfh_\DK \to \DK $, hence  we can consider $\hfhd$ as a subset of $\hfhd_\DK $.
Then $\CO_\DK $ is the full subcategory of the usual category $\CO$ over $\hfg_\DK $ that consists of modules with the property that all weights lie in the set  $\tau+\hfhd\subset \hfhd_\DK $. 

\subsection{The level}
Suppose that $M$ is a weight module. 
Since $\tau(K)=0$, the element $K$ acts on a weight space $M_\lambda$
by multiplication with the scalar $\lambda(K)\in\DC$. For $k\in\DC$ we
denote by $M_k$ the eigenspace of the action of $K$ on $M$ with
eigenvalue $k$. Since $K$ is
central, each eigenspace $M_k$ is a submodule of $M$ and we have $M=\bigoplus_{k\in\DC} M_k$. In the case $M=M_k$ we
call $k$ the {\em level} of the module $M$ and we let
$\CO_{A,k}\subset\CO_{A}$ be the full subcategory whose objects are
those of level $k$. 

It turns out that there is a
distinguished level  $\crit\in\DC$ which is critical in the sense
that the structure of $\CO_{A,\crit}$ differs drastically  from the
structure of $\CO_{A,k}$ for all $k\ne \crit$. For the definition of $\crit$
see Section \ref{sec-crithyp}.

Let $A\to A^\prime$ be a homomorphism of deformation algebras. The following result is easy to prove.
\begin{lemma}\label{lemma-bschg}
 The functor $\cdot\otimes_A A^\prime$ induces a functor $\CO_A\to
 \CO_{A^\prime}$ and for any $k\in\DC$ a functor $\CO_{A,k}\to\CO_{A^\prime,k}$.
\end{lemma}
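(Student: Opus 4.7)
The plan is to unwind the four defining properties of an object of $\CO_{A'}$ and check each one directly on $M \otimes_A A'$ for $M \in \CO_A$. The whole argument is essentially a bookkeeping exercise in base change, with no real obstacle beyond making sure the deformation map $\tau$ behaves compatibly under the algebra homomorphism $A \to A'$.

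First I would observe that since $\hfg_{A'} = \hfg \otimes_\DC A' = \hfg_A \otimes_A A'$, for any $\hfg_A$-module $M$ the $A'$-module $M \otimes_A A'$ carries a canonical $\hfg_{A'}$-action, defined on simple tensors by $(x \otimes a')(m \otimes b') = (x.m) \otimes a'b'$ for $x \in \hfg$, which is well-defined and makes $\cdot \otimes_A A'$ a functor $\hfg_A\catmod \to \hfg_{A'}\catmod$.

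Next I would verify the weight module condition. Let $\tau'\colon \hfh \to A'$ be the composition of $\tau\colon \hfh \to A$ with $A \to A'$, which is the deformation map for $A'$. For $\lambda \in \hfhd$, $H \in \hfh$, $m \in M_\lambda$ and $a' \in A'$ one computes
$$
H.(m \otimes a') = (H.m) \otimes a' = (\lambda(H) \cdot 1_A + \tau(H)) m \otimes a' = (\lambda(H) \cdot 1_{A'} + \tau'(H))(m \otimes a'),
$$
so $M_\lambda \otimes_A A' \subseteq (M \otimes_A A')_\lambda$. Since $M = \bigoplus_\lambda M_\lambda$ gives $M \otimes_A A' = \bigoplus_\lambda (M_\lambda \otimes_A A')$, and weight spaces for distinct $\lambda$ in $\hfg_{A'}$-modules are automatically $A'$-linearly independent, these inclusions must be equalities. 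In particular $M \otimes_A A'$ is a weight module.

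For local $\hfb_{A'}$-finiteness, note that $U(\hfb_{A'}) = U(\hfb_A) \otimes_A A'$, so for any $m \in M$ and $a' \in A'$ we have $U(\hfb_{A'}).(m \otimes a') = (U(\hfb_A).m) \otimes_A A'$; if $U(\hfb_A).m$ is finitely generated over $A$, then its base change is finitely generated over $A'$, and the local finiteness follows since $M \otimes_A A'$ is generated as an $A'$-module by elements of the form $m \otimes 1$. Finally, if $M \in \CO_{A,k}$, then $K$ acts on $M$ by the scalar $k \in \DC$, and because the action on $M \otimes_A A'$ is defined through the action on $M$ it still acts by $k$; hence $M \otimes_A A' \in \CO_{A',k}$. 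Functoriality on morphisms is immediate from the functoriality of $\cdot \otimes_A A'$ on $A$-modules, which finishes the proof. The only step that requires a moment of thought is the weight decomposition, where one must track the shift from $\tau$ to $\tau'$; everything else is formal.
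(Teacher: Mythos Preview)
Your argument follows the same route as the paper's proof: check the weight decomposition via $(M\otimes_A A')_\lambda = M_\lambda\otimes_A A'$, check local $\hfb_{A'}$-finiteness by base-changing $U(\hfb_A).m$, and observe that the level is preserved. One point the paper makes explicit that you elide: for a general element $m' = \sum_i m_i\otimes a_i' \in M\otimes_A A'$, the submodule $U(\hfb_{A'}).m'$ sits inside the finitely generated $A'$-module $\sum_i \bigl(U(\hfb_A).m_i\bigr)\otimes_A A'$, but to conclude that this \emph{submodule} is itself finitely generated you must invoke the standing hypothesis that $A'$ is noetherian. Your sentence ``the local finiteness follows since $M\otimes_A A'$ is generated as an $A'$-module by elements of the form $m\otimes 1$'' skips this step; otherwise the proof is complete and matches the paper.
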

We denote by $\hfhd_k\subset\hfhd$ the affine hyperplane containing all $\lambda$ with $\lambda(K)=k$.
\subsection{The duality}

For $M\in \CO_A$ we define 
$$
M^{\star}:= \bigoplus_{\lambda\in \hfhd} \Hom_A(M_\lambda,A).
$$
Then $M^\star$ carries an action of $\hfg$ that is given by $(X.\phi)(m)=\phi(-\omega(X).m)$ for  $X\in\hfg$, $\phi\in M^{\star}$
and $m\in M$. Here $\omega\colon \hfg\to\hfg$ is the
Chevalley-involution (cf.~\cite[Section 1.3]{Kac}). It has the
property that it maps the root space $\hfg_\alpha$  to $\hfg_{-\alpha}$ and acts as multiplication by $-1$ on $\hfh$. In particular, we have $(M^\star)_\lambda=\Hom_A(M_\lambda,A)$. Together with the obvious
$A$-module structure, $M^\star$ is an object in
$\CO_A$, and if $M$ is of level $k$, then $M^\star$ is also of
level $k$.

\subsection{The deformed Verma modules}

 For  $\lambda\in\hfhd$ we denote by $A_\lambda$ the $\hfb_A$-module
 that is free of rank one as an $A$-module and on which $\hfb$ acts
 via the character $\lambda+\tau$: this means that  $H\in \hfh$ acts
 as multiplication with the scalar $\lambda(H).1_A+\tau(H)$ and each $X\in [\hfb,\hfb]$ acts by  zero.
The {\em deformed Verma-module} with highest weight $\lambda$ is 
$$
\Delta_A(\lambda):=U(\hfg_A)\otimes_{U(\hfb_A)} A_\lambda.
$$
The {\em deformed dual Verma module} associated to $\lambda$ is
$$
\nabla_A(\lambda):=\Delta_A(\lambda)^\star.
$$

Both $\Delta_A(\lambda)$ and $\nabla_A(\lambda)$  are locally
$\hfb_A$-finite weight modules, hence are contained in $\CO_A$. If
$A\to A^\prime$ is a homomorphism of deformation algebras, then we
have isomorphisms
$$
\Delta_A(\lambda)\otimes_A A^\prime\cong \Delta_{A^\prime}(\lambda), \quad\nabla_A(\lambda)\otimes_A A^\prime\cong \nabla_{A^\prime}(\lambda).
$$

\subsection{Simple objects in $\CO_A$}

Now suppose that $A$ is a local deformation algebra with maximal
ideal $\fm\subset A$ and residue field $\DK =A/\fm$. The residue field
inherits the structure of an $S$-algebra and is, as such, a
deformation algebra as well. The canonical map  $A\to \DK $ gives us a
base change functor $\cdot\otimes_A \DK \colon \CO_A\to \CO_\DK $ by Lemma
\ref{lemma-bschg}. 

As we have observed before, the category $\CO_\DK $ is just a direct
summand of the usual category $\CO$ for the affine Kac--Moody algebra
$\hfg_\DK $. Its objects are those whose weight spaces
correspond to weights in $\tau+\hfhd\subset\hfhd_\DK$. By
the classical theory, the simple isomorphism classes in $\CO_\DK $ are
parametrized by their highest weights in $\tau+\hfhd$, and we denote by
$L_\DK (\lambda)$ a representative corresponding to $\tau+\lambda$.

In \cite[Proposition 2.1]{FieMZ} we showed the following.

\begin{proposition}\label{prop-simples} Suppose that $A$ is a local deformation algebra with residue field $\DK $. Then the functor $\cdot\otimes_A \DK $ yields a bijection
$$
\left\{
\begin{matrix}
\text{simple isomorphism} \\
\text{classes of $\CO_A$}
\end{matrix}
\right\}\stackrel{\sim}\to
\left\{
\begin{matrix}
\text{simple isomorphism} \\
\text{classes of $\CO_\DK $}
\end{matrix}
\right\}.
$$
\end{proposition}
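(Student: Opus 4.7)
The strategy is to construct, for each $\lambda\in\hfhd$, a distinguished simple object $L_A(\lambda)\in\CO_A$ as the unique simple quotient of the deformed Verma module $\Delta_A(\lambda)$, and to verify that (i) every simple object of $\CO_A$ is of this form for a unique $\lambda$, and (ii) the base change $L_A(\lambda)\otimes_A k$ is canonically isomorphic to $L_k(\lambda)$. Since the classical theory identifies the simple isomorphism classes of $\CO_k$ with $\tau+\hfhd$ via $\lambda\mapsto L_k(\lambda)$, these two items together yield the bijection.

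For the construction of $L_A(\lambda)$, first observe that $\Delta_A(\lambda)_\lambda$ is free of rank one over $A$, so any proper submodule $N\subsetneq\Delta_A(\lambda)$ satisfies $N_\lambda\subsetneq A$ (otherwise $N$ would contain a generator of $\Delta_A(\lambda)_\lambda$ and coincide with $\Delta_A(\lambda)$). Since $A$ is local, every proper $A$-submodule of $A$ lies in the maximal ideal $\fm$. Hence the sum $N_A(\lambda)$ of all proper submodules has $\lambda$-weight space contained in $\fm$ and remains proper; it is the unique maximal submodule, and $L_A(\lambda):=\Delta_A(\lambda)/N_A(\lambda)$ is simple by construction. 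The key step in relating $L_A(\lambda)$ to $L_k(\lambda)$ is then the observation that $\fm\cdot\Delta_A(\lambda)$ is itself a proper $\hfg_A$-submodule---its $\lambda$-weight space is exactly $\fm$---so $\fm\cdot\Delta_A(\lambda)\subseteq N_A(\lambda)$, and consequently $\fm$ annihilates $L_A(\lambda)$. This makes $L_A(\lambda)$ naturally a $\hfg_k$-module; any $\hfg_k$-submodule is a fortiori a $\hfg_A$-submodule, so it is still simple over $\hfg_k$, has highest weight $\lambda+\tau$, and therefore must coincide with $L_k(\lambda)$. In particular $L_A(\lambda)\otimes_A k\cong L_A(\lambda)=L_k(\lambda)$.

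To complete the proof, I would show that every simple $L\in\CO_A$ is isomorphic to some $L_A(\lambda)$. Pick any non-zero weight vector $v\in L_\mu$; by local $\hfb_A$-finiteness the submodule $U(\hfb_A)\cdot v$ is a finitely generated $A$-module. Its weight spaces are $A$-submodules and decompose it as a direct sum, so the noetherian hypothesis forces only finitely many weights to contribute. Hence there is a weight $\lambda$, necessarily of the form $\mu$ plus a non-negative integral combination of positive roots, that is maximal with $L_\lambda\ne 0$, and any non-zero $v_\lambda\in L_\lambda$ is annihilated by $[\hfb,\hfb]$. The universal property of the Verma module yields a non-zero map $\Delta_A(\lambda)\to L$ which, by simplicity of $L$, factors through an isomorphism $L_A(\lambda)\stackrel{\sim}{\to}L$. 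Distinct $\lambda$'s give distinct highest weights and hence distinct $L_A(\lambda)$'s, which together with the computation above establishes both injectivity and surjectivity of $L\mapsto L\otimes_A k$ on isomorphism classes.

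The main obstacle I anticipate is the extraction of a highest weight vector from an arbitrary simple object. Translating local $\hfb_A$-finiteness---a priori only a statement about the $A$-module $U(\hfb_A)\cdot v$---into finiteness of the weight support of that submodule requires using the decomposition of weight spaces as $A$-submodules together with the noetherian hypothesis on $A$. Once this is in hand, the remaining steps are essentially formal consequences of the universal property of $\Delta_A(\lambda)$ and a Nakayama-type argument exploiting the locality of $A$.
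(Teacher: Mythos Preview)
The paper does not give a proof here but defers to \cite[Proposition 2.1]{FieMZ}; your argument is correct and is the standard one for this result, so it almost certainly coincides with what is done in the cited reference. One minor clarification worth making explicit: the weight $\lambda$ you extract is maximal among the finitely many weights of $U(\hfb_A)\cdot v$, not a priori among all weights of $L$, but this is already enough, since any positive root vector applied to $v_\lambda\in (U(\hfb_A)\cdot v)_\lambda$ lands back in $U(\hfb_A)\cdot v$ at a strictly larger weight and hence vanishes.
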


We denote by $L_A(\lambda)$ the simple object corresponding to $L_\DK (\lambda)$ under the above bijection.

\subsection{Characters and Jordan--H\"older multiplicities}\label{sec-JHM} We denote by ``$\varle$'' the usual partial order on $\hfhd$ defined
by $\lambda\varle\mu$ if $\mu-\lambda$ is a sum of positive roots of $\hfg$. Suppose now that $A=\DK $ is a field. In this case we
consider the full subcategory $\CO^f_\DK $ of $\CO_\DK $ that consists of
objects  $M$ such that each weight space $M_\lambda$ is finite
dimensional as a $\DK $-vector space and such that there exist $\mu_1,\dots,\mu_n\in\hfhd$ with the property  that
$M_\lambda\ne 0$ implies $\lambda\le \mu_i$ for some $i$.

Let $\DZ[\hfhd]=\bigoplus_{\lambda\in\hfhd}\DZ e^\lambda$ be the group
ring of the additive group $\hfhd$ and $\widehat{\DZ[\hfhd]}\subset
\prod_{\lambda\in\hfhd}\DZ e^\lambda$ its
completion with respect to the partial order: an element in
$\widehat{\DZ[\hfhd]}$ is an element $\sum_{\lambda\in\hfhd} f_\lambda
e^\lambda$ such that there exist $\mu_1,\dots,\mu_n\in\hfhd$ with the
property that $f_\lambda\ne 0$ implies $\lambda\le \mu_i$ for some
$i$. 
For each $M\in\CO_\DK ^f$ we can then define its character
$$
\cha M:=\sum_{\lambda\in\hfhd}\dim_\DK  M_\lambda\cdot e^\lambda\in
\widehat{\DZ[\hfhd]}.
$$
Now each simple object $L_\DK (\lambda)$ belongs to $\CO_\DK ^f$ and there
are well defined numbers $a_\mu\in \DN$ with
$$
\cha M=\sum_{\mu\in\hfhd} a_\mu \cha L_\DK (\mu).
$$
(cf.~\cite{DGK82}). Note that the sum on the right hand side is in
general an infinite sum. We define the multiplicity of $L_\DK (\mu)$ in
$M$ as
$$
[M:L_\DK (\mu)]:=a_\mu.
$$

\subsection{Truncation}

Our next aim is to study the projective objects in
$\CO_A$. Unfortunately, not all of the $L_A(\lambda)$ admit a
projective cover. In order to overcome this slight technical problem,
we introduce certain truncated subcategories of $\CO_A$ in which a
projective cover exists for each of its simple objects.

Let  $\CJ$ be a subset of $\hfhd$. We call $\CJ$  {\em open} if for
all $\lambda\in \CJ$, $\mu\in\hfhd$ with $\mu\le \lambda$ we have $\mu\in \CJ$. This
indeed defines a topology on $\hfhd$. Note that a subset
$\CI\subset\hfhd$ is closed in this topology if $\lambda\in\CI$,
$\mu\in\hfhd$ with $\mu\ge \lambda$ implies $\mu\in\CI$.

We now construct a functorial filtration on  each object of $\CO_A$ that is indexed by the set of closed  subsets of $\hfhd$ and, dually, a functorial cofiltration indexed by the set of open subsets of $\hfhd$. 

\begin{definition} Suppose that  $\CJ\subset \hfhd$ is  open and let $\CI:=\hfhd\setminus\CJ$ be its closed complement. Let $M\in \CO_A$.
\begin{enumerate}
\item  We define $M_\CI\subset M$ as the $\hfg_A$-submodule generated by the weight spaces corresponding to weights in $\CI$, i.e.~
$$
M_\CI:=U(\hfg_A).\bigoplus_{\lambda\in\CI} M_\lambda.
$$
\item We define
$$
M^\CJ:=M/M_{\CI}.
$$
\end{enumerate}
Let $\CO_{A,\CI}\subset \CO_A$ be the full subcategory of objects $M$ with $M=M_\CI$ and 
 $\CO_{A}^\CJ\subset \CO_A$ the full subcategory of objects $M$ with
 $M=M^\CJ$. 
\end{definition}
Note that an object $M$ of $\CO_A$ belongs to
 $\CO_{A,\CI}$ if and only if it is generated by its weight spaces
 corresponding to weights in $\CI$. Dually, $M$ belongs to $\CO_A^\CJ$
 if and only if $M_\lambda\ne 0$ implies that $\lambda\in\CJ$.

If $\CJ^\prime\subset \CJ$ is another open subset with complement $\CI^\prime\supset \CI$, then we have a natural inclusion $M_{\CI}\subset M_{\CI^\prime}$ and a natural quotient $M^\CJ\to M^{\CJ^\prime}$.  For $\lambda\in \hfhd$, each of the modules $\Delta_A(\lambda)$, $\nabla_A(\lambda)$ and $L_A(\lambda)$ is contained in $\CO_A^{\CJ}$ if and only if $\lambda\in \CJ$. Note that $M\to M_\CI$ defines a functor from $\CO_A$ to $\CO_{A,\CI}$ that is right adjoint to the inclusion $\CO_{A,\CI}\subset \CO_A$. Dually, $M\mapsto M^\CJ$ defines a functor from $\CO_A$ to $\CO_{A}^\CJ$ that is left adjoint to the inclusion $\CO_{A}^\CJ\subset \CO_A$.

\begin{lemma}\label{lemma-truncproj} Suppose that $\CJ$ is an open subset in $\hfhd$ and that  $P$ is a projective object in $\CO_A^\CJ$. Then for any open subset $\CJ^\prime\subset\CJ$, the object $P^{\CJ^\prime}$ is projective in $\CO_A^{\CJ^\prime}$.
\end{lemma}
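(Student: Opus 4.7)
The plan is to deduce this from the standard principle that a left adjoint to an exact functor preserves projective objects. Concretely, I want to show that $(\cdot)^{\CJ^\prime}$ restricts to a functor $\CO_A^\CJ\to\CO_A^{\CJ^\prime}$ which is left adjoint to the full inclusion $i\colon\CO_A^{\CJ^\prime}\hookrightarrow\CO_A^\CJ$, and that $i$ is exact; the result then follows formally.

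Writing $\CI^\prime:=\hfhd\setminus\CJ^\prime$, the inclusion $\CJ^\prime\subset\CJ$ gives $\CO_A^{\CJ^\prime}\subset\CO_A^\CJ$ as a full subcategory. Exactness of $i$ is clear: kernels and cokernels in both abelian categories are computed inside $\CO_A$, and $\CO_A^{\CJ^\prime}$ is closed under subobjects and quotients in $\CO_A^\CJ$, since a weight of a subobject or quotient is already a weight of the ambient module. For the adjunction, the paper has already noted that $(\cdot)^{\CJ^\prime}\colon\CO_A\to\CO_A^{\CJ^\prime}$ is left adjoint to the inclusion $\CO_A^{\CJ^\prime}\hookrightarrow\CO_A$. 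The underlying reason is that for any $M\in\CO_A$ and any $N\in\CO_A^{\CJ^\prime}$, a homomorphism $M\to N$ automatically annihilates $M_{\CI^\prime}$, since this submodule is generated by weight spaces at weights in $\CI^\prime$ while $N$ has none. Restricting $M$ to lie in $\CO_A^\CJ$, and noting that $M^{\CJ^\prime}$ lies in $\CO_A^{\CJ^\prime}\subset\CO_A^\CJ$ by construction, preserves the bijection
$$
\Hom_{\CO_A^{\CJ^\prime}}(M^{\CJ^\prime},N)\cong\Hom_{\CO_A^\CJ}(M,i(N))
$$
natural in $M\in\CO_A^\CJ$ and $N\in\CO_A^{\CJ^\prime}$.

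Given these two ingredients the conclusion is immediate: if $Q\to Q^\prime$ is a surjection in $\CO_A^{\CJ^\prime}$, then $i(Q)\to i(Q^\prime)$ is a surjection in $\CO_A^\CJ$ by exactness of $i$, so projectivity of $P$ in $\CO_A^\CJ$ gives surjectivity of $\Hom_{\CO_A^\CJ}(P,i(Q))\to\Hom_{\CO_A^\CJ}(P,i(Q^\prime))$. The above adjunction identifies this with $\Hom_{\CO_A^{\CJ^\prime}}(P^{\CJ^\prime},Q)\to\Hom_{\CO_A^{\CJ^\prime}}(P^{\CJ^\prime},Q^\prime)$, proving that $P^{\CJ^\prime}$ is projective in $\CO_A^{\CJ^\prime}$. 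I anticipate no genuine obstacle; the only points requiring a moment's care are verifying that $\CO_A^{\CJ^\prime}$ is a Serre subcategory of $\CO_A^\CJ$ (so that $i$ is exact) and that the truncation functor indeed has image in $\CO_A^{\CJ^\prime}$, both of which follow at once from the definitions.
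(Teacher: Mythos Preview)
Your proof is correct and follows exactly the same approach as the paper: both argue that $(\cdot)^{\CJ^\prime}$ is left adjoint to the exact inclusion $\CO_A^{\CJ^\prime}\hookrightarrow\CO_A^\CJ$, and hence preserves projectives. You simply spell out in more detail the exactness of the inclusion and the adjunction bijection, which the paper takes for granted.
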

\begin{proof} This follows immediately from the fact that the functor $(\cdot)^{\CJ^\prime}\colon \CO_A^\CJ\to \CO_A^{\CJ^\prime}$ is left adjoint to the (exact) inclusion functor $\CO_A^{\CJ^\prime}\to\CO_A^\CJ$.
\end{proof}

\begin{lemma}\label{lemma-truncation}  Let $M\in\CO_A$.  Suppose that $\CJ^\prime\subset\CJ\subset\hfhd$ are open subsets. Then there is a canonical isomorphism
  $M^{\CJ^\prime}\stackrel{\sim}\to (M^\CJ)^{\CJ^\prime}$.
\end{lemma}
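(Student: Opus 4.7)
The plan is to prove this by a direct identification of kernels, with naturality coming for free from the construction. Set $\CI:=\hfhd\setminus\CJ$ and $\CI^\prime:=\hfhd\setminus\CJ^\prime$; since $\CJ^\prime\subset\CJ$ we have $\CI\subset\CI^\prime$. The defining generating set of $M_\CI$ is then contained in that of $M_{\CI^\prime}$, so $M_\CI\subset M_{\CI^\prime}$, and hence there is a canonical surjection
$$
\pi\colon M^\CJ = M/M_\CI \twoheadrightarrow M/M_{\CI^\prime} = M^{\CJ^\prime}.
$$
Since $M^{\CJ^\prime}$ lies in $\CO_A^{\CJ^\prime}$, the map $\pi$ factors (uniquely) through $(M^\CJ)^{\CJ^\prime}$, because the restriction of $(\cdot)^{\CJ^\prime}$ to $\CO_A^\CJ$ is still left adjoint to the inclusion $\CO_A^{\CJ^\prime}\hookrightarrow\CO_A^\CJ$ (the same proof as for the adjunction over $\CO_A$ works verbatim). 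This yields a canonical morphism $\bar\pi\colon (M^\CJ)^{\CJ^\prime}\to M^{\CJ^\prime}$, and by construction it is natural in $M$.

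To show that $\bar\pi$ is an isomorphism it suffices to identify $\ker\pi = M_{\CI^\prime}/M_\CI$ with $(M^\CJ)_{\CI^\prime}$. The key observation is that $M_\CI$ is a weight submodule of $M$ (being generated by weight vectors, and $\hfg_A$ acting through root spaces preserves the weight decomposition), so $(M/M_\CI)_\lambda = M_\lambda/(M_\CI)_\lambda$ for every $\lambda\in\hfhd$. Consequently, the $\hfg_A$-submodule of $M^\CJ$ generated by $\bigoplus_{\lambda\in\CI^\prime}(M^\CJ)_\lambda$ is precisely the image under the projection $M\to M/M_\CI$ of
$$
U(\hfg_A)\cdot \bigoplus_{\lambda\in\CI^\prime} M_\lambda = M_{\CI^\prime},
$$
which is $(M_\CI + M_{\CI^\prime})/M_\CI = M_{\CI^\prime}/M_\CI$. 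So $(M^\CJ)_{\CI^\prime}=M_{\CI^\prime}/M_\CI=\ker\pi$, and the third isomorphism theorem furnishes the desired isomorphism $(M^\CJ)^{\CJ^\prime}=(M^\CJ)/(M^\CJ)_{\CI^\prime}\stackrel{\sim}{\to} M/M_{\CI^\prime}=M^{\CJ^\prime}$.

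I do not expect a serious obstacle here; the only delicate point is the identification $(M/M_\CI)_\lambda = M_\lambda/(M_\CI)_\lambda$, which relies on $M_\CI$ being a weight submodule. Everything else is a formal manipulation with quotients and adjoints.
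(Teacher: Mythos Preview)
Your proof is correct and follows essentially the same approach as the paper: both arguments identify the kernel of the composite surjection $M\to M^\CJ\to (M^\CJ)^{\CJ^\prime}$ with $M_{\CI^\prime}$, the kernel of $M\to M^{\CJ^\prime}$. The paper does this slightly more tersely by directly comparing the two kernels as submodules of $M$, whereas you pass through the quotient $M^\CJ$ and identify $(M^\CJ)_{\CI^\prime}$ with $M_{\CI^\prime}/M_\CI$; the extra care you take in noting that $M_\CI$ is a weight submodule is implicit in the paper's version.
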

\begin{proof}
Clearly, the kernel of the quotient $M\to M^{\CJ^\prime}$ as well as the kernel of the composition $M\to M^\CJ\to (M^\CJ)^{\CJ^\prime}$ are generated by the weight spaces $M_\mu$ with $\mu\not\in\CJ^\prime$.
\end{proof}

\subsection{Verma flags}
We start this subsection with a well-known definition.

\begin{definition}\label{def-Vermaflag} Let $M$ be an object in
  $\CO_A$. We say that $M$ {\em admits a Verma flag} if there is a
  finite filtration
$$
0=M_0\subset M_1\subset\dots\subset M_n=M
$$
such that for $i=1,\dots,n$, $M_i/M_{i-1}$ is isomorphic to
$\Delta_A(\mu_i)$ for some $\mu_i\in\hfhd$. 
\end{definition}

Suppose that $M\in\CO_A$ admits a Verma flag. For each $\mu\in\hfhd$,  the number
of occurences of $\Delta_A(\mu)$ as a subquotient of a Verma flag of
$M$ is independent of the chosen filtration. We denote this number by
$(M:\Delta_A(\mu))$.

Let $\mu\in \hfhd$ and $M\in \CO_A$. The set $\CJ=\{\nu\in \hfhd\mid \nu\le \mu\}$ is open and we define $M^{\varle\mu}:= M^{\CJ}$. We define $M^{<\mu}$ likewise. Then we set
$$
M_{[\mu]}:=\ker\left(M^{\varle\mu}\to M^{<\mu}\right).
$$
Note that  $M_{[\mu]}$ is generated by its $\mu$-weight space. If $M$ admits a Verma flag, then $M_{[\mu]}$ is a direct sum of $(M:\Delta_A(\mu))$-copies of $\Delta_A(\mu)$. This follows from the fact that one can reorder each Verma flag such that subquotients corresponding to higher weights occur earlier (see Lemma \ref{lemma-resVermafltrunc} for an analogous result).

\subsection{Projective objects in $\CO_A$}

As before  we  assume that $A$ is a  local deformation algebra with residue field $\DK $.
For general $\lambda$ the simple module $L_A(\lambda)$ admits a
projective cover in $\CO_A$ only if we restrict the set of allowed
weights from above. So let us call a subset $\CJ$ of $\hfhd$ {\em
  bounded} (rather {\em locally bounded from above}) if for any $\lambda\in \CJ$ the set $\CJ\cap\{\ge\lambda\}=\{\mu\in \CJ\mid \mu\ge\lambda\}$ is finite.

\begin{theorem}\label{theorem-projobjinO} Suppose that $A$ is a local deformation algebra with residue field $\DK $.  Let $\CJ$ be a bounded open subset of $\hfhd$. 
\begin{enumerate} 
\item For each $\lambda\in \CJ$ there exists a
  projective cover $P_A^{\CJ}(\lambda)$ of $L_A(\lambda)$ in
  $\CO_A^{\CJ}$. It admits a Verma flag and we have
$$
(P_A^{\CJ}(\lambda):\Delta_A(\mu))=
\begin{cases}
[\nabla_\DK (\mu):L_\DK (\lambda)], & \text{ if $\mu\in \CJ$,} \\
0, & \text{ otherwise.}
\end{cases}
$$
\item If $\CJ^\prime\subset \CJ$ is open as well, then 
$$
P_A^{\CJ}(\lambda)^{\CJ^\prime}\cong P_A^{\CJ^\prime}(\lambda).
$$
\item If $A\to A^\prime$ is a homomorphism of local deformation
  algebras and $P\in\CO_A^\CJ$ is projective, then $P\otimes_A
  A^\prime\in\CO_{A^\prime}^\CJ$ is projective. 
\item We have $P_A^{\CJ}(\lambda)\otimes_A \DK \cong P_\DK ^{\CJ}(\lambda)$. 
\item Suppose that  $P$ is  a finitely generated projective object in
  $\CO_A^\CJ$ and that $A\to A^\prime$ is a homomorphism of local deformation
  algebras. For any $M\in\CO_A^\CJ$ the natural map 
$$
\Hom_{\CO_A}(P,M)\otimes_A A^\prime\to \Hom_{\CO_{A^\prime}}(P\otimes_A A^\prime,M\otimes_A A^\prime)
$$
is an isomorphism.
\end{enumerate}
\end{theorem}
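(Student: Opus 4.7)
My plan is to handle the five parts in sequence, with Part (1) being the main construction and Parts (2)--(5) then following by essentially formal arguments.

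For Part (1), I would build $P_A^{\CJ}(\lambda)$ as an iterated extension starting from $M_0 := \Delta_A(\lambda)$, which already sits in $\CO_A^{\CJ}$. At each step I ask whether there is some $\mu\in\CJ$ and a non-split short exact sequence $0\to\Delta_A(\mu)\to M_{i+1}\to M_i\to 0$; equivalently, whether $\Ext^1_{\CO_A^{\CJ}}(M_i,\nabla_A(\mu))$ contains a non-zero element when paired suitably. I would form the universal extension by $\Delta_A(\mu)^{\oplus n_\mu}$ that kills all these $\Ext^1$ classes. Because $\CJ$ is bounded and every new Verma summand has highest weight strictly above $\lambda$ and bounded by the previously occurring weights, the procedure must terminate at some $P$ with $\Ext^1(P,\nabla_A(\mu))=0$ for every $\mu\in\CJ$; such a $P$ is projective in $\CO_A^{\CJ}$ (by the Verma/$\nabla$-duality in a highest weight setup and a standard Ext long exact sequence argument). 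By construction $P$ admits a Verma flag and has simple head $L_A(\lambda)$, so it is the projective cover. The BGGH formula then drops out of the identities $\Hom_{\CO_A}(\Delta_A(\nu),\nabla_A(\mu))=\delta_{\nu\mu}A$ and $\Ext^1(\Delta_A(\nu),\nabla_A(\mu))=0$: they imply that $(P_A^{\CJ}(\lambda):\Delta_A(\mu))$ equals the $A$-rank of $\Hom(P_A^{\CJ}(\lambda),\nabla_A(\mu))$, and reducing modulo the maximal ideal gives $[\nabla_k(\mu):L_k(\lambda)]$.

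Parts (2)--(4) are now formal. For (2), the previous lemma guarantees that $P_A^{\CJ}(\lambda)^{\CJ^\prime}$ is projective in $\CO_A^{\CJ^\prime}$; the head of $P_A^{\CJ}(\lambda)$ is still $L_A(\lambda)$ after truncation (since $\lambda\in\CJ^\prime$), so indecomposability forces $P_A^{\CJ}(\lambda)^{\CJ^\prime}\cong P_A^{\CJ^\prime}(\lambda)$. For (3), restriction of scalars along $A\to A^\prime$ is an exact functor $\CO_{A^\prime}^{\CJ}\to\CO_A^{\CJ}$ and $\cdot\otimes_A A^\prime$ is its left adjoint (using Lemma \ref{lemma-bschg}), so the latter sends projectives to projectives. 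For (4), $P_A^{\CJ}(\lambda)\otimes_A k$ is projective by (3), and by Nakayama applied to the $\lambda$-weight space its head is $L_A(\lambda)\otimes_A k=L_k(\lambda)$; uniqueness of projective covers then gives the isomorphism with $P_k^{\CJ}(\lambda)$.

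The most delicate step is Part (5). The idea is to reduce to the case $P=P_A^{\CJ}(\lambda)$: any finitely generated projective in $\CO_A^{\CJ}$ is a direct summand of a finite sum of such modules, and both sides of the natural transformation commute with direct summands. For $P=P_A^{\CJ}(\lambda)$ both the source functor $M\mapsto \Hom_{\CO_A}(P,M)\otimes_A A^\prime$ and the target functor $M\mapsto\Hom_{\CO_{A^\prime}}(P\otimes_A A^\prime,M\otimes_A A^\prime)$ are right exact in $M$ (using projectivity of $P$ and $P\otimes_A A^\prime$, together with right exactness of $\cdot\otimes_A A^\prime$). The main obstacle is that right exactness alone does not let one check an isomorphism on simples, because the simples themselves are not obviously compatible with base change. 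My plan is to filter $M$ by its weight truncations $M^{\leq\nu}/M^{<\nu}$ and proceed by induction using Lemma \ref{lemma-extrunc}, ultimately reducing to verifying the isomorphism on modules generated in a single weight where one can use the adjunction $\Hom_{\hfg_A}(\Delta_A(\mu),N)=\Hom_{\hfb_A}(A_\mu,N)$ together with the fact that the weight spaces of $P_A^{\CJ}(\lambda)$ are finitely generated free $A$-modules (a consequence of the Verma flag), so that finite presentations of $\Hom(P,M)$ as an $A$-module can be base changed directly.
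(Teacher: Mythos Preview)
Your construction by iterated Verma extensions is different from the paper's: following Rocha-Caridi--Wallach, the paper first builds for each $\lambda\in\CJ$ an object $Q_A^{\CJ}(\lambda)$ representing the exact functor $M\mapsto M_\lambda$ on $\CO_A^{\CJ}$, and takes $P_A^{\CJ}(\lambda)$ to be an indecomposable summand mapping onto $L_A(\lambda)$. This is not a cosmetic difference; it reverses the logical order and avoids a circularity in your argument. Your BGGH identity in (1) requires reducing $\Hom(P_A^{\CJ}(\lambda),\nabla_A(\mu))$ modulo the maximal ideal and identifying the result with $\Hom(P_k^{\CJ}(\lambda),\nabla_k(\mu))$, which is exactly (5) together with (4), both of which you prove only afterwards. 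Your argument for (4) in turn asserts that $P_A^{\CJ}(\lambda)\otimes_A k$ is indecomposable without justification; the standard route is idempotent lifting in $\End(P_A^{\CJ}(\lambda))$, and that again needs (5) to control the endomorphism ring under base change. In the paper, (5) holds for $Q_A^{\CJ}(\lambda)$ tautologically (both sides are $M_\lambda\otimes_A A^\prime$), hence for every summand; (3) follows from the construction giving $Q_A^{\CJ}(\lambda)\otimes_A A^\prime\cong Q_{A^\prime}^{\CJ}(\lambda)$; then (4) by idempotent lifting using (5); then the BGGH part of (1) from (4); and finally (2) from (1).

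There is also a genuine gap in your argument for (3). Restriction of scalars along $A\to A^\prime$ does \emph{not} in general send $\CO_{A^\prime}^{\CJ}$ into $\CO_A^{\CJ}$: if $A^\prime$ is not finite over $A$, then for $M\in\CO_{A^\prime}^{\CJ}$ and $m\in M$ the $A^\prime$-module $U(\hfb_{A^\prime}).m$ being finitely generated over $A^\prime$ does not force $U(\hfb_A).m$ to be finitely generated over $A$, so local $\hfb_A$-finiteness can fail. Hence your left-adjoint argument does not apply as stated. Your plan for (5) is also too vague: filtering an arbitrary $M$ by weight truncations need not terminate, and the reduction to ``modules generated in a single weight'' is not made precise. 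All of these issues disappear once one has the representing object $Q_A^{\CJ}(\lambda)$, which is why the paper's approach is organized around it.
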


\begin{proof} Part (1) is contained in \cite[Theorem 4.2, Theorem 5.3]{Fie11}. Part (2) is shown in the course of the proof of Theorem 4.2 in \cite{Fie11}. The statements in (3) and (5) are found in \cite[Proposition 2.4]{FieMZ}. Finally, part (4) is shown in the course of the proof of Theorem 5.3 in \cite{Fie11}.
\end{proof}

\subsection{The block decomposition of $\CO_A$}

Let $A$ be a local deformation algebra with residue field $\DK $.
We let $\sim_A$ be the equivalence relation on $\hfhd$ that is generated
by the relations $\lambda\sim_A\mu$ for all $\lambda,\mu\in\hfhd$ for which
there exists an open bounded subset $\CJ$ of $\hfhd$ such that $L_A(\mu)$ is a subquotient of $P_A^\CJ(\lambda)$, i.e. if there is a non-zero homomorphism $P_A^\CJ(\mu)\to P_A^\CJ(\lambda)$. 

\begin{lemma}\label{lemma-eqrel} The equivalence relation $\sim_A$ is also generated by
  either of the following sets of relations:
\begin{enumerate}
\item $\lambda\sim_A\mu$ if there exists an open bounded
subset $\CJ$ of $\hfhd$ such that $(P_A^{\CJ}(\lambda):\Delta_A(\mu))\ne
0$.
\item $\lambda\sim_A\mu$ if $[\Delta_\DK (\lambda):L_\DK (\mu)]\ne 0$. 
\end{enumerate}
\end{lemma}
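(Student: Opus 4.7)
The backbone of the argument is the identity
\[
(P_A^{\CJ}(\lambda):\Delta_A(\mu)) \;=\; [\nabla_k(\mu):L_k(\lambda)] \;=\; [\Delta_k(\mu):L_k(\lambda)]
\]
valid for $\mu \in \CJ$. The first equality is Theorem~\ref{theorem-projobjinO}(1); the second holds because the Chevalley-dual $\nabla_k(\mu) = \Delta_k(\mu)^{\star}$ has the same character, and hence the same Jordan-H\"older multiplicities, as $\Delta_k(\mu)$. Any pair $\lambda, \mu \in \hfhd$ is contained in some bounded open $\CJ$ (for instance $\{\eta : \eta \leq \lambda\} \cup \{\eta : \eta \leq \mu\}$, each summand bounded by the finiteness of intervals in the affine root lattice), so the generating relations of (1) and those of (2) coincide after the swap $\lambda \leftrightarrow \mu$, which the symmetry of equivalence relations absorbs. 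Thus (1) and (2) generate the same equivalence relation.

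The inclusion of these generators in $\sim_A$ is immediate: if $(P_A^{\CJ}(\lambda):\Delta_A(\mu)) \neq 0$, then $\Delta_A(\mu)$ is a subquotient of $P_A^{\CJ}(\lambda)$ via its Verma flag. Since $A$ is local and $(\Delta_A(\mu))_\mu$ is $A$-free of rank one, the sum of all proper submodules of $\Delta_A(\mu)$ meets this weight space inside $\fm$ and is hence proper, so $\Delta_A(\mu)$ admits a unique simple quotient, which by Proposition~\ref{prop-simples} must be $L_A(\mu)$. Thus $L_A(\mu)$ is a subquotient of $P_A^{\CJ}(\lambda)$ and $\lambda \sim_A \mu$.

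For the converse direction, suppose $L_A(\mu) \cong N/N'$ inside some $P_A^{\CJ}(\lambda)$, and fix a Verma filtration $M_0 \subset \dots \subset M_n = P_A^{\CJ}(\lambda)$ with $M_i/M_{i-1} \cong \Delta_A(\nu_i)$. The induced filtration $(N\cap M_i + N')/N'$ of the simple $L_A(\mu)$ jumps at exactly one index $i_0$, and a standard isomorphism-theorem chase identifies $L_A(\mu)$ as a subquotient of $\Delta_A(\nu_{i_0})$ with the multiplicity $(P_A^{\CJ}(\lambda):\Delta_A(\nu_{i_0}))$ nonzero. The latter gives $\lambda$ and $\nu_{i_0}$ related via (1); it remains to show $\nu_{i_0}$ and $\mu$ are related via (2), i.e., that $[\Delta_k(\nu_{i_0}):L_k(\mu)] \neq 0$.

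This last step is the main technical obstacle and requires a base-change argument. Since $L_A(\mu)$ is simple with $(L_A(\mu))_\mu = A/\fm$ and $A$ is local, $\fm$ annihilates $L_A(\mu)$, so $\fm N \subseteq N'$. Passing to images $\bar N \supseteq \bar N'$ in $\Delta_k(\nu_{i_0}) = \Delta_A(\nu_{i_0})/\fm\Delta_A(\nu_{i_0})$ gives a subquotient $\bar N/\bar N'$ of $\Delta_k(\nu_{i_0})$ which, by right-exactness of $\otimes_A k$, is a quotient of $L_k(\mu)$; by simplicity it is either $L_k(\mu)$ or zero. The plan is to rule out the latter using that the weight spaces of $\Delta_A(\nu_{i_0})$ are $A$-free (hence $\fm$-torsion-free): a Nakayama argument on the $\mu$-weight space forces a nonzero image in $(\bar N/\bar N')_\mu$, making $\bar N/\bar N' \cong L_k(\mu)$ and hence $[\Delta_k(\nu_{i_0}):L_k(\mu)] \neq 0$, completing the chain.
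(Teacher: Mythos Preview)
The paper states this lemma without proof, so there is no argument to compare against; the question is simply whether your proof is correct. Your strategy is sound up through the reduction to showing that if $L_A(\mu)$ is a subquotient of $\Delta_A(\nu_{i_0})$, then $[\Delta_k(\nu_{i_0}):L_k(\mu)]\ne 0$. The final ``Nakayama on the $\mu$-weight space'' step, however, does not go through as written.

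Here is the obstruction. Take $A$ a DVR with uniformiser $t$ and choose $\nu$ with $\Delta_k(\nu)=L_k(\nu)$ simple. Then $N:=\fm\,\Delta_A(\nu)$ and $N':=\fm^2\Delta_A(\nu)$ are $\hfg_A$-submodules (the action is $A$-linear), and freeness of $\Delta_A(\nu)$ gives $N/N'\cong\Delta_k(\nu)=L_k(\nu)=L_A(\nu)$. Yet the image $\bar N$ of $N$ in $\Delta_k(\nu)=\Delta_A(\nu)/\fm\Delta_A(\nu)$ is zero, so $\bar N/\bar N'=0$. Freeness of the ambient weight space does not prevent $N_\mu$ from sitting entirely inside $\fm\,\Delta_A(\nu)_\mu$, and Nakayama gives no leverage on a \emph{submodule} that is not itself known to be nonzero modulo $\fm$. (In this example the desired conclusion $[\Delta_k(\nu):L_k(\nu)]\ne0$ still holds, but your mechanism for reaching it has failed.)

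A clean repair is to apply Nakayama to a Hom-space rather than to a weight space, and to base-change \emph{before} passing to the Verma subquotient. From $L_A(\mu)=N/N'\subseteq P_A^\CJ(\lambda)$, projectivity of $P_A^\CJ(\mu)$ in $\CO_A^\CJ$ lifts the surjection $N\to L_A(\mu)$ to a nonzero map $P_A^\CJ(\mu)\to P_A^\CJ(\lambda)$. Since $P_A^\CJ(\mu)$ is a direct summand of $Q_A^\CJ(\mu)$, the module $\Hom_{\CO_A}(P_A^\CJ(\mu),P_A^\CJ(\lambda))$ is a direct summand of $P_A^\CJ(\lambda)_\mu$, hence finitely generated over $A$; Nakayama then forces $\Hom_{\CO_A}(P_A^\CJ(\mu),P_A^\CJ(\lambda))\otimes_A k\ne 0$, which by Theorem~\ref{theorem-projobjinO}(5) equals $\Hom_{\CO_k}(P_k^\CJ(\mu),P_k^\CJ(\lambda))$. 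A nonzero map $P_k^\CJ(\mu)\to P_k^\CJ(\lambda)$ has image with simple head $L_k(\mu)$, so $L_k(\mu)$ occurs as a subquotient of $P_k^\CJ(\lambda)$; now the Verma flag of $P_k^\CJ(\lambda)$ (working entirely over the field $k$) produces $\nu$ with $(P_A^\CJ(\lambda):\Delta_A(\nu))=(P_k^\CJ(\lambda):\Delta_k(\nu))\ne 0$ and $[\Delta_k(\nu):L_k(\mu)]\ne 0$, closing the chain via (1) and (2).
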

\begin{proof} See \cite[Lemma 5.5]{Fie11}.
\end{proof}

For an equivalence class $\Lambda\in\hfhd/\sim_A$ we define the full
subcategory $\CO_{A,\Lambda}$ of $\CO_A$ that contains all objects $M$
that have the property that each highest weight of a subquotient lies
in $\Lambda$. Note that it is the subcategory generated by the objects
$P_A^{\CJ}(\lambda)$ for all $\lambda\in\Lambda$ and all bounded open
subsets $\CJ$ of $\hfhd$ that contain $\lambda$. 
Then we have the following result on the decomposition of $\CO_A$.
\begin{theorem}[{\cite[Theorem 5.1]{Fie11}}] The functor
\begin{align*}
\prod_{\Lambda\in\hfhd/{\scriptstyle \sim_A}} \CO_{A,\Lambda} &\to \CO_A\\
(M_\Lambda)&\mapsto \bigoplus_\Lambda M_\Lambda
\end{align*}
is an equivalence of categories.
\end{theorem}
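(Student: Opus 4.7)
The plan is to verify separately that the functor $F\colon(M_\Lambda)\mapsto\bigoplus_\Lambda M_\Lambda$ is (a) well-defined into $\CO_A$, (b) fully faithful, and (c) essentially surjective. For (a), each vector in $\bigoplus_\Lambda M_\Lambda$ has only finitely many nonzero components, so the weight space decomposition and local $\hfb_A$-finiteness of each summand are inherited by the sum.

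The key step for (b) is to show $\Hom_{\CO_A}(M,N)=0$ whenever $M\in\CO_{A,\Lambda}$, $N\in\CO_{A,\Lambda'}$, and $\Lambda\ne\Lambda'$. Given $\phi\colon M\to N$, its image $\im(\phi)$ is simultaneously a quotient of $M$ and a submodule of $N$, so any simple subquotient $L_A(\nu)$ of $\im(\phi)$ would have to satisfy $\nu\in\Lambda\cap\Lambda'=\emptyset$. To force $\im(\phi)=0$, I need every nonzero object of $\CO_A$ to admit at least one such simple subquotient. For this, pick a nonzero finitely generated submodule $X\subset\im(\phi)$; then $X\otimes_A k$ is nonzero by Nakayama and lies in the subcategory $\CO_k^f$ of Section \ref{sec-JHM}, so Jordan--H\"older theory yields a simple subquotient $L_k(\nu)$, which lifts by Proposition \ref{prop-simples} to $L_A(\nu)$ as a subquotient of $X$.

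For (c), given $M\in\CO_A$, define $M_\Lambda\subset M$ as the $\hfg_A$-submodule generated by the images of all morphisms $P_A^\CJ(\lambda)\to M$ as $\lambda$ ranges over $\Lambda$ and $\CJ$ over bounded open subsets containing $\lambda$. The Verma flag of Theorem \ref{theorem-projobjinO}(1), combined with the equivalent description of $\sim_A$ in the preceding Lemma, shows that every simple subquotient of $P_A^\CJ(\lambda)$ has the form $L_A(\mu)$ with $\mu\sim_A\lambda$, so $M_\Lambda\in\CO_{A,\Lambda}$. For any $m\in M$ the cyclic submodule $U(\hfg_A).m$ is finitely generated and locally $\hfb_A$-finite, hence its set of weights is contained in some bounded open $\CJ\subset\hfhd$; by Theorem \ref{theorem-projobjinO}(1) it is a quotient of a finite direct sum $\bigoplus_i P_A^\CJ(\lambda_i)$, placing $m\in\sum_i M_{\Lambda_i}$. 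Thus $M=\sum_\Lambda M_\Lambda$. Directness follows because any nonzero element of $M_\Lambda\cap\bigl(\sum_{\Lambda'\ne\Lambda}M_{\Lambda'}\bigr)$ would have all simple subquotients lying both in $\Lambda$ (as it sits in $M_\Lambda$) and in $\bigcup_{\Lambda'\ne\Lambda}\Lambda'$ (as a quotient of the external direct sum $\bigoplus_{\Lambda'\ne\Lambda}M_{\Lambda'}$), contradicting the existence of a simple subquotient established in step (b).

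The main obstacle is the $\Hom$-vanishing step (b), specifically the claim that every nonzero object of $\CO_A$ admits a simple subquotient: over a field this is standard Jordan--H\"older theory, but for a general local deformation algebra $A$ it requires the combination of Nakayama's lemma, base change to the residue field, and the bijection of Proposition \ref{prop-simples} between simple isomorphism classes in $\CO_A$ and $\CO_k$. A secondary delicate point is establishing $M_\Lambda\in\CO_{A,\Lambda}$, which depends on the Verma flag of $P_A^\CJ(\lambda)$ from Theorem \ref{theorem-projobjinO}(1) respecting the equivalence $\sim_A$ via the characterization in the preceding Lemma.
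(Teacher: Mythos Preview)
The paper does not give its own proof of this statement; it simply cites \cite[Proposition 2.8]{FieMZ}. Your argument is the standard one for block decompositions and is essentially correct.

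Two small imprecisions in step~(b) are worth flagging. First, Nakayama's lemma applies to finitely generated $A$-modules, and $X$ is only finitely generated as a $\hfg_A$-module; you should apply Nakayama to a nonzero weight space $X_\mu$, which \emph{is} finitely generated over $A$ since $X$ is a finitely generated object of $\CO_A$. Second, the phrase ``lifts by Proposition~\ref{prop-simples}'' obscures the actual mechanism: Proposition~\ref{prop-simples} does not lift subquotients. The point is rather that $X\otimes_A k = X/\fm X$ is a quotient of $X$ in $\CO_A$, so any subquotient of it is already a subquotient of $X$; Proposition~\ref{prop-simples} then only serves to identify $L_k(\nu)$, viewed in $\CO_A$, with $L_A(\nu)$. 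With these clarifications your proof goes through.
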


\subsection{The Kac--Kazhdan theorem, integral roots and the integral Weyl group}

The Kac--Kazhdan theorem gives a rather explicit description of the set
of pairs $(\lambda,\mu)$ such that $[\Delta_\DK (\lambda):L_\DK (\mu)]\ne
0$. By Lemma \ref{lemma-eqrel}, these pairs generate the equivalence relation
``$\sim_A$''. 

Recall the bilinear form $(\cdot,\cdot)\colon\hfhd\times\hfhd\to
\DC$. For any deformation algebra $A$ we set $\hfhd_A:=\hfhd\otimes_\DC A=\Hom_\DC(\hfh,A)$ and denote
by $(\cdot,\cdot)_A\colon\hfhd_A\times\hfhd_A\to A$ the $A$-bilinear
continuation of $(\cdot,\cdot)$. The structure map $\tau\colon \hfh\to
A$ can be considered as an element in $\hfhd_A$. Let $\rho\in\hfhd$ be
an element with $(\rho,\alpha)=1$ for any simple affine root
$\alpha\in\widehat\Pi$.

Now we can state the result of Kac and Kazhdan (we slightly reformulate
their original theorem in terms of equivalence classes): 

\begin{theorem}[\cite{KK79}]
 The relation ``$\sim_A$'' is generated by $\lambda\sim_A\mu$
  for all pairs $\lambda,\mu$ such that there exists a root $\alpha\in
  \hR$ and $n\in\DZ$ with $2(\lambda+\rho,\alpha)_\DK =n(\alpha,\alpha)_\DK $
  and $\lambda-\mu=n\alpha$. 
\end{theorem}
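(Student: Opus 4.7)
The plan is to reduce this restatement of Kac--Kazhdan to the classical theorem applied to $\hfg_k$. By the preceding lemma, $\sim_A$ is already known to be generated by those pairs $(\lambda,\mu)$ with $[\Delta_k(\lambda):L_k(\mu)]\ne 0$, so it suffices to convert this multiplicity condition into the combinatorial form displayed in the theorem.

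The first step is to unwind definitions: the module $\Delta_k(\lambda)$ is the ordinary $\hfg_k$-Verma module with highest weight $\tau+\lambda\in\hfhd_k$, and $L_k(\mu)$ is its simple quotient of highest weight $\tau+\mu$. Next I would apply the classical Kac--Kazhdan theorem over $k$: $[\Delta_k(\lambda):L_k(\mu)]\ne 0$ if and only if there is a finite chain
\[
\tau+\lambda=\nu_0,\nu_1,\ldots,\nu_t=\tau+\mu
\]
in $\hfhd_k$ together with roots $\alpha_i\in\hR$ and positive integers $n_i\in\DN$ satisfying $\nu_{i-1}-\nu_i=n_i\alpha_i$ and $2(\nu_{i-1}+\rho,\alpha_i)_k=n_i(\alpha_i,\alpha_i)_k$ at each step. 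Finally, since every $\alpha_i$ lies in $\hR\subset\hfhd$, such a chain stays in the affine subspace $\tau+\hfhd\subset\hfhd_k$, so writing $\nu_i=\tau+\lambda_i$ we obtain $\lambda_i\in\hfhd$ with $\lambda_{i-1}-\lambda_i=n_i\alpha_i$. Each elementary step of the chain then produces an elementary $\sim_A$-relation of the form claimed, and conversely every relation of the claimed form arises from such a step; this shows the two generating sets define the same equivalence relation on $\hfhd$.

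The main technical point is the final translation: one must carefully rewrite the Kac--Kazhdan condition $2(\nu_{i-1}+\rho,\alpha_i)_k=n_i(\alpha_i,\alpha_i)_k$ in terms of the classical weight $\lambda_{i-1}\in\hfhd$, tracking how the contribution $2(\tau,\alpha_i)_k$ enters the equation in $k$. This is handled by observing that $\tau$ vanishes on $\DC K\oplus\DC D$, so it pairs trivially with imaginary roots (those steps pass through unchanged), while on real roots the shift is compatible with the normalization of $\rho$ and is absorbed into the defining equation. Once this bookkeeping is complete, the conclusion is immediate from the classical statement of Kac--Kazhdan.
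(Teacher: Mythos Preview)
The paper does not give a proof of this theorem; it is simply quoted from \cite{KK79} as a reformulation of the classical Kac--Kazhdan result, adapted to the deformed setting. So there is no proof in the paper to compare against.

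Your reduction strategy is the right one: use the preceding lemma to reduce to the condition $[\Delta_k(\lambda):L_k(\mu)]\ne 0$, identify $\Delta_k(\lambda)$ with the ordinary $\hfg_k$-Verma module of highest weight $\tau+\lambda$, and invoke the classical Kac--Kazhdan theorem over $k$. The chain argument and the observation that the chain stays inside $\tau+\hfhd$ are fine.

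However, your final paragraph does not survive scrutiny. The classical Kac--Kazhdan condition at each step reads
\[
2(\tau+\lambda_{i-1}+\rho,\alpha_i)_k=n_i(\alpha_i,\alpha_i)_k,
\]
and for a real root $\alpha_i=\alpha+n\delta$ one has $(\tau,\alpha_i)_k=(\tau,\alpha)_k$, which is in general a nonzero element of $k$ depending on the deformation. This contribution is \emph{not} ``absorbed into the defining equation'' and has nothing to do with the normalization of $\rho$; it is precisely what makes $\sim_A$ depend on $A$. Indeed, the paper's later use of the theorem (see the proof of Proposition~\ref{prop-defequivclasses}, where the condition is written as $2(\lambda+\tau,\beta)\in\DZ(\beta,\beta)$) shows that the $\tau$ is meant to be present in the integrality condition. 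So the statement you are proving should be read with the shift by $\tau$ included, and your argument should simply carry that term along rather than attempt to make it vanish. Once you do that, the bookkeeping is straightforward and your proof goes through.
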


For $\lambda\in\hfhd$ we define the set of {\em integral roots}
(with respect to $\lambda$) by
\begin{align*}
\hR_A(\lambda)&:=\{\alpha\in\hR\mid
2(\lambda+\rho,\alpha)_\DK \in\DZ(\alpha,\alpha)_\DK \}
\end{align*}
and the corresponding {\em integral Weyl group} by
$$
\hCW_A(\lambda):=\langle s_\alpha\mid
\alpha\in\hR_A(\lambda)\cap\hR^{\re}\rangle\subset\hCW.
$$
Let $\Lambda\subset\hfhd$ be an equivalence class with respect to
``$\sim_A$''. 
It follows
from the Kac--Kazhdan theorem that we have $\hR_A(\lambda)=\hR_A(\mu)$ and
$\hCW_A(\lambda)=\hCW_A(\mu)$ for all $\lambda,\mu\in\Lambda$. Hence we
can denote these two objects by $\hR_A(\Lambda)$ and
$\hCW_A(\Lambda)$.

\subsection{The critical level}\label{sec-crithyp}

Let $\Lambda\in\hfhd/\sim_A$ be an equivalence class. For each $\lambda,\mu\in\Lambda$ we then have $\lambda(K)=\mu(K)$, hence there is a certain
$k=k(\Lambda)\in\DC$ such that each object in $\CO_{A,\Lambda}$ is of
level $k$. Note that $\nu(K)=(\nu,\delta)$ for all $\nu\in\hfhd$. 

\begin{lemma}[{\cite[Lemma 4.2]{AF08}}]\label{lemma-critlevel} Let $\Lambda\in\hfhd/\sim_A$ be an equivalence class. The following are equivalent.
\begin{enumerate}
\item We have $\lambda(K)=-\rho(K)$ for some $\lambda\in\Lambda$.
\item We have  $\lambda(K)=-\rho(K)$ for all $\lambda\in\Lambda$.
\item We have $\lambda+\delta\in\Lambda$ for all $\lambda\in\Lambda$.
\item We have $n\delta\in\hR_A(\Lambda)$ for some $n\ne 0$. 
\item We have $n\delta\in\hR_A(\Lambda)$ for all $n\ne 0$. 
\end{enumerate}
\end{lemma}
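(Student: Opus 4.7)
The plan is to organise the five conditions around statement (1), establishing $(1) \Leftrightarrow (2)$, $(1) \Leftrightarrow (4) \Leftrightarrow (5)$, and $(1) \Leftrightarrow (3)$. The first group of equivalences is routine, while $(3) \Rightarrow (1)$ is the substantive point.

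The equivalence $(1) \Leftrightarrow (2)$ is essentially immediate: since $K$ is central, every $\Lambda$ has a well-defined common level $k(\Lambda)$, so ``some $\lambda \in \Lambda$'' and ``all $\lambda \in \Lambda$'' mean the same thing here. For $(1) \Leftrightarrow (4) \Leftrightarrow (5)$, I would use the explicit form data from the earlier section: $(\delta,\delta) = 0$ and $(\mu,\delta) = \mu(K)$ for $\mu \in \hfhd$. Consequently $(n\delta, n\delta) = 0$, and the defining integrality condition for $n\delta \in \hR_A(\Lambda)$ collapses to $(\lambda+\rho, n\delta)_k = n(\lambda(K)+\rho(K))\cdot 1_k = 0$. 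Since $\DC \hookrightarrow k$ (because $k$ is a $\DC$-algebra) and $n \neq 0$, this is equivalent to $\lambda(K) = -\rho(K)$, and, crucially, is independent of the particular $n \neq 0$.

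For $(1) \Rightarrow (3)$, I would apply the Kac--Kazhdan description of $\sim_A$ directly: given $\mu \in \Lambda$ at critical level, take $\alpha = \delta \in \hR^{\im}$ and $n = -1$. Then $\mu - (\mu + \delta) = n\alpha$, and the integrality condition $2(\mu+\rho, \delta)_k = -(\delta,\delta)_k$ reduces to $2(\mu(K)+\rho(K))\cdot 1_k = 0$, which holds by (1). Hence $\mu \sim_A \mu + \delta$, which gives $\mu + \delta \in \Lambda$.

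The main obstacle is $(3) \Rightarrow (1)$, because a chain realising $\lambda \sim_A \lambda + \delta$ via generating relations could a priori pass entirely through real roots, offering no transparent way to read off the level. I propose to circumvent this with a Casimir-type invariant $q(\mu) := (\mu+\rho, \mu+\rho) \in \DC$. A direct expansion, combined with the Kac--Kazhdan equation $2(\mu+\rho, \alpha) = n(\alpha,\alpha)$ (which again holds in $\DC$, because $\DC \hookrightarrow k$), shows that for each generating step $\mu \sim_A \mu'$ with $\mu - \mu' = n\alpha$ one has
\[
q(\mu') - q(\mu) = -2n(\mu+\rho,\alpha) + n^2(\alpha,\alpha) = -n^2(\alpha,\alpha) + n^2(\alpha,\alpha) = 0,
\]
so $q$ is constant on equivalence classes. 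Applied to $\lambda \sim_A \lambda + \delta$, this forces
\[
0 = q(\lambda+\delta) - q(\lambda) = 2(\lambda+\rho,\delta) + (\delta,\delta) = 2(\lambda(K) + \rho(K)),
\]
so $\lambda(K) = -\rho(K)$, which is (1).
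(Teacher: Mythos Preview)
The paper states this lemma without proof, so there is nothing to compare your argument against directly. Your proof is correct and is the standard one.

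A few remarks. For $(1)\Leftrightarrow(4)\Leftrightarrow(5)$ your computation is right: since $\lambda,\rho,\delta$ all lie in the $\DC$-form $\hfhd$, the pairing $(\lambda+\rho,n\delta)_k$ is just the complex number $n(\lambda(K)+\rho(K))$ viewed in $k$, and the injectivity of $\DC\to k$ lets you read the condition back in $\DC$. The same observation underlies your Casimir argument for $(3)\Rightarrow(1)$: each Kac--Kazhdan generating relation $2(\mu+\rho,\alpha)_k=n(\alpha,\alpha)_k$ is an identity between elements coming from $\DC$, so it already holds in $\DC$, and the invariance of $q(\mu)=(\mu+\rho,\mu+\rho)$ follows by the quadratic expansion you wrote. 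This is exactly the usual argument that the shifted Casimir value is constant on linkage classes; it is the natural way to extract the level constraint from $\lambda\sim_A\lambda+\delta$ without tracking the individual steps in the chain.

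One small point you might make explicit for the reader: even if one includes the deformation parameter $\tau$ in the Kac--Kazhdan condition (as the paper does later in Proposition~\ref{prop-defequivclasses}), nothing changes here because $\tau(K)=0$ by construction, so $(\tau,\delta)_k=0$ and both the $(4)\Leftrightarrow(1)$ computation and the Casimir invariant go through verbatim with $q(\mu)=(\mu+\rho+\tau,\mu+\rho+\tau)_k\in k$.
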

The level $\crit:=-\rho(K)$ is called the {\em critical} level.

\section{Restricted representations}

In this section we
recall one of the most significant structures that we encounter for the
category $\CO$ of an affine Kac--Moody algebra at the critical level. Recall that we add the
derivation operator $D$ to the central extension of the loop algebra
corresponding to $\fg$. This allows us to consider $\CO$ (and the
deformed versions $\CO_A$) as graded categories.

\subsection{A shift functor}

Suppose that $M$ is a $\hfg_A$-module and that $L$ is a
$\hfg=\hfg_\DC$-module. Then $M\otimes_\DC L$ acquires the structure
of a $\hfg_A$-module such that
$\hfg$ acts  via the usual tensor product action ($X(m\otimes
l)=Xm\otimes l+m\otimes Xl$ for $X\in\hfg$, $m\in M$, $l\in L$) and $A$ acts on  the first tensor factor. The following is easy to prove.

\begin{lemma} 
\begin{enumerate}
\item If $M$ is locally $\hfb_A$-finite and $L$ is locally $\hfb$-finite, then $M\otimes_\DC L$ is locally $\hfb_A$-finite.
\item If $M$ and $L$ are weight modules, then $M\otimes_\DC L$ is a weight module.
\item If $M\in \CO_A$ and $L\in \CO$, then $M\otimes_\DC L\in \CO_A$.
\end{enumerate}
\end{lemma}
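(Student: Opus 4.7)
The plan is to treat the three parts essentially independently, but in order, since (3) follows formally from (1) and (2) together with the fact that the described $\hfg_A$-module structure is well-defined (which is standard: $\hfg$ acts by the coproduct, $A$ commutes with the $\hfg$-action because it acts only on the first factor and $\hfg$ commutes with $A$ on $M$).

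For part (2), I would check directly on pure tensors. If $m\in M_\lambda$ and $l\in L_\mu$, then for $H\in\hfh$,
\[
H.(m\otimes l) = (H.m)\otimes l + m\otimes (H.l) = \bigl(\lambda(H)\cdot 1_A+\tau(H)\bigr)(m\otimes l)+\mu(H)(m\otimes l),
\]
where I used that $L$ is a non-deformed weight module (so the structure map for $L$ is zero). This gives $H.(m\otimes l) = ((\lambda+\mu)(H)\cdot 1_A + \tau(H))(m\otimes l)$, so $M_\lambda\otimes_\DC L_\mu\subset (M\otimes_\DC L)_{\lambda+\mu}$. Since a weight module is the sum of its weight spaces, summing over $\lambda,\mu$ exhibits $M\otimes_\DC L$ as a direct sum of weight spaces.

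For part (1), the key observation is the Leibniz rule: the action of $\hfb_A$ on $M\otimes_\DC L$ is by derivations with respect to the tensor product, i.e.\ for any $u\in U(\hfb_A)$ the action of $u$ on $m\otimes l$ is given by $\Delta(u)$ acting through $U(\hfb_A)\otimes_\DC U(\hfb)$, where $\Delta$ is the standard coproduct. Consequently
\[
U(\hfb_A).(m\otimes l)\subset \bigl(U(\hfb_A).m\bigr)\otimes_\DC \bigl(U(\hfb).l\bigr).
\]
If $v_1,\dots,v_n$ generate $U(\hfb_A).m$ over $A$ and $w_1,\dots,w_r$ span $U(\hfb).l$ over $\DC$ (finitely many in each case, by hypothesis), then the elements $v_i\otimes w_j$ span the right-hand side as an $A$-module. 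Since $A$ is noetherian, the submodule $U(\hfb_A).(m\otimes l)$ of this finitely generated $A$-module is itself finitely generated, so $M\otimes_\DC L$ is locally $\hfb_A$-finite.

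Part (3) is then immediate: if $M\in\CO_A$ and $L\in\CO$, both modules are weight modules and locally finite over the respective Borels, so parts (1) and (2) show that $M\otimes_\DC L$ is a locally $\hfb_A$-finite weight module, hence lies in $\CO_A$. The only step requiring a little care is the Leibniz-rule argument in part (1); everything else is a direct verification on pure tensors.
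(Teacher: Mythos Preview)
Your argument is correct and follows essentially the same route as the paper's proof: for (1) you reduce to pure tensors and use $U(\hfb_A).(m\otimes l)\subset (U(\hfb_A).m)\otimes_\DC(U(\hfb).l)$ together with the noetherian hypothesis, for (2) you verify $M_\lambda\otimes_\DC L_\mu\subset (M\otimes_\DC L)_{\lambda+\mu}$, and (3) follows. The paper's proof is terser but records exactly these points; your version merely makes the coproduct step and the noetherian conclusion explicit.
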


Note that the simple module $L(\delta)=L_\DC(\delta)$ is one-dimensional (the subalgebra $\tfg=[\hfg,\hfg]$ acts trivially, while $D$ acts as the identity operator). The $\hfg$-module $L(\delta)\otimes_\DC L(-\delta)\cong L(0)$ is the trivial module.  
In particular, the {\em shift functor} 
\begin{align*}
T\colon \CO_A &\to \CO_A\\
M&\mapsto M\otimes_\DC L(\delta)
\end{align*}
is an equivalence with inverse $T^{-1}=\cdot\otimes_\DC L(-\delta)$. 
Since $L(\delta)$ has level
$0$ the shift functor $T$ preserves the subcategories $\CO_{A,k}$, i.e.~ we get induced autoequivalences $T\colon \CO_{A,k}\to \CO_{A,k}$ for each $k$. 

Let $\Lambda\in\hfhd/_{\textstyle \sim_A}$ be an equivalence class. The corresponding block $\CO_{A,\Lambda}$ is preserved by the functor $T$ if and only if for each $\lambda\in\Lambda$ we have $\lambda+\delta\in\Lambda$, hence if and only if $\Lambda$ is critical (cf.~Lemma \ref{lemma-critlevel}). 

In the following we will study natural transformations $z\colon T^n \to \id$ (for some $n\in\DZ$) from the functor $T^n$ to the identity functor (on $\CO_A$, $\CO_{A,k}$ or $\CO_{A,\Lambda}$). Note that, if $k\ne \crit$,  then there is no non-vanishing natural transformation from $T^n$ to $\id_{\CO_{A,k}}$ if $n\ne 0$.  In contrast, for $k=\crit$ the space of natural transformations from $T^n$ to $\id_{\CO_{A,\crit}}$ is huge.

\subsection{The Feigin--Frenkel center}\label{sec-FFcenter}
\newcommand{\FFC}{\mathfrak{z}} For a more thorough discussion of the structure that we introduce now we refer to Section 5 of \cite{AF08}. 
We denote by $V^\crit(\fg)$ the universal affine vertex algebra associated with $\fg$ at the critical level and by $\FFC$ its center. Then each smooth $\tilde\fg=[\hfg,\hfg]$-module $M$ can be considered as a graded module over the vertex algebra $V^\crit(\fg)$ and hence over $\FFC$. In \cite{AF08} we exhibited homogeneous generators $p^{(1)},\dots,p^{(l)}$, where $l$ denotes the rank of $\fg$, of $\FFC$ and this yields an action of the graded polynomial ring 
$$
\CZ_\crit=\DC[p_s^{(i)}\mid i=1,\dots,l, s\in \DZ]=\bigoplus_{n\in\DZ}\CZ_\crit^n
$$
on $M$. Here, $\CZ_\crit^n$ is the subspace of $\CZ_\crit$ spanned by the elements $p_{n_1}^{(i_1)}\cdots p_{n_r}^{(i_r)}$ with $n_1+\cdots+n_r=n$. We set $\CZ_\crit^-=\bigoplus_{n<0}\CZ_\crit^n$, $\CZ^+_\crit=\bigoplus_{n>0}\CZ_\crit^n$, $\CZ^{\ge 0}_\crit=\bigoplus_{n\ge0}\CZ^n_\crit$ and $\CZ^{\le 0}_\crit=\bigoplus_{n\le 0}\CZ^n_\crit$.

Now $\hfg=\tilde\fg\oplus\DC D$ and the action of the grading operator $D$ allows us to view  each $z\in\CZ^n_\crit$ as a natural transformation from $T^n$ to the identity functor on $\CO_{A,\crit}$. For $M\in\CO_{A,\crit}$ we denote by $z^M\colon T^nM\to M$ the resulting homomorphism.
This natural transformation is compatible with the base change functors $\CO_{A,\crit}\to \CO_{A^\prime,\crit}$ associated to a homomorphism $A\to A^\prime$ of deformation algebras, in the sense that $z^{M\otimes_AA^\prime}=z^M\otimes \id\colon T^n(M\otimes_A A^\prime)=(T^nM)\otimes_AA^\prime\to M\otimes_A A^\prime$.

\subsection{Restricted representations}

Let $A$ be a local deformation algebra. 
\begin{definition}\label{def-resrep} Let $M\in\CO_{A,\crit}$. We say that $M$ is {\em
    restricted} if for all $n\ne 0$ and all $z\in\CZ_{\crit}^n$ the homomorphism  $z^M\colon T^nM\to M$ is zero.
  \end{definition}
 We denote by $\rCO_{A,\crit}$ the full subcategory of $\CO_{A,\crit}$ that
 consists of restricted representations. For an open subset $\CJ$ of
 $\hfhd$ we set $\rCO_{A,\crit}^\CJ=\rCO_{A,\crit}\cap\CO_{A,\crit}^{\CJ}$.

For $M\in\CO_{A,\crit}$ we define $M_\res$, the largest restricted submodule, and $M^{\res}$, the largest restricted quotient, as follows. For each $z\in\CZ_{\crit}^n$ we can view $z^{T^{-n}M}$ as a homomorphism from $T^{-n}T^nM=M$ to $T^{-n}M$. Then 
$$
M_{\res} = \{ m\in M \mid z^{T^{-n}M}(m)=0\text{ for all $z\in\CZ_{\crit}^n$, $n\ne 0$}\}.
$$
Let $\CZ_\crit^nM$ be the submodule of $M$ generated by the images of all homomorphisms $z^M\colon T^n M\to M$ with $z\in\CZ_\crit^n$. Then
$$
M^{\res} = M/ \sum_{n\in \DZ,n\ne 0} \CZ_{\crit}^n  M.
$$
Both $M_{\res}$ and $M^{\res}$ are restricted objects in $\CO_{A,\crit}$.    We get functors $M\mapsto M_{\res}$ and $M\mapsto M^{\res}$ from $\CO_{A,\crit}$ to $\rCO_{A,\crit}$ that are right  resp. left adjoint to the inclusion functor $\rCO_{A,\crit}\to \CO_{A,\crit}$.

\subsection{Restriction, truncation and base change}
We now collect some  results on the restriction functor.
\begin{lemma}\label{lemma-resandtrunc} Let $\CJ\subset\hfhd$ be open. For each $M\in\CO_A$
  there is a natural isomorphism
$$
(M^{\res})^\CJ\cong (M^\CJ)^{\res}.
$$
\end{lemma}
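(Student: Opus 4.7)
The plan is to recognize both $(-)^\CJ \circ (-)^{\res}$ and $(-)^{\res} \circ (-)^\CJ$ as left adjoint to the inclusion of $\rCO_A^\CJ := \rCO_A \cap \CO_A^\CJ$ into $\CO_A$, and then invoke uniqueness of adjoints to conclude. This reduces the problem to two compatibility statements: that $(-)^\CJ$ sends restricted objects to restricted objects, and that $(-)^{\res}$ sends objects of $\CO_A^\CJ$ to objects of $\CO_A^\CJ$.

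The second statement is essentially automatic, since $M^{\res}$ is a quotient of $M$ and quotienting cannot enlarge the set of occurring weights. The first statement is the real content and is a direct consequence of naturality. For $\phi \in \CA_A^n$ with $n \ne 0$ and the canonical surjection $\pi \colon M \to M^\CJ$, naturality of $\phi$ gives
$$
\phi^{M^\CJ} \circ \pi = (T^n \pi) \circ \phi^M.
$$
If $M$ is restricted, the right-hand side vanishes, and surjectivity of $\pi$ then forces $\phi^{M^\CJ} = 0$. Hence $M^\CJ$ is again restricted.

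With these two points in place, the restrictions $(-)^\CJ \colon \rCO_A \to \rCO_A^\CJ$ and $(-)^{\res} \colon \CO_A^\CJ \to \rCO_A^\CJ$ are well-defined and each is left adjoint to its corresponding inclusion. Since the inclusion $\rCO_A^\CJ \hookrightarrow \CO_A$ factors in two parallel ways (through $\rCO_A$ or through $\CO_A^\CJ$), and since a composition of left adjoints is the left adjoint of the composition, both $M \mapsto (M^{\res})^\CJ$ and $M \mapsto (M^\CJ)^{\res}$ compute the same left adjoint and are therefore naturally isomorphic. As a more concrete alternative, the same naturality observation identifies $\sum_{n \ne 0} \CA_A^n M^\CJ$ with the image of $\sum_{n \ne 0} \CA_A^n M$ in $M^\CJ$, so that both sides coincide with the quotient of $M$ by $M_\CI + \sum_{n \ne 0} \CA_A^n M$ (where $\CI = \hfhd \setminus \CJ$). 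I do not foresee a serious obstacle here; everything rests on the single naturality argument above.
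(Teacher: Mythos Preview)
Your argument is correct. The paper takes the concrete route you sketch at the end: it shows that the two canonical surjections $a\colon M\to (M^{\res})^\CJ$ and $b\colon M\to (M^\CJ)^{\res}$ have the same kernel, namely the submodule generated by the $M_\mu$ with $\mu\notin\CJ$ together with the $\CA_A^n M$ for $n\ne 0$. Your adjoint-functor packaging is a mild reorganization of the same content: the two compatibility checks you isolate (that $(-)^\CJ$ preserves restrictedness via naturality, and that $(-)^{\res}$ preserves the weight bound) are exactly what is needed to see that both kernels are generated by the same set, and uniqueness of adjoints then replaces the explicit identification of quotients. The categorical phrasing has the advantage of making the naturality of the isomorphism automatic, while the paper's version is slightly more hands-on; neither buys anything the other does not.
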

\begin{proof} The kernel of both compositions  $M\to M^{\res}\to (M^{\res})^\CJ$ and $M\to M^\CJ\to (M^\CJ)^{\res}$ is generated by all weightspaces $M_\mu$ with $\mu\not\in\CJ$ together with $\sum_{n\ne 0}\CZ_\crit^n M$. 
\end{proof}

\begin{lemma}\label{lemma-resbschg} Let $M\in\CO_A$ and fix a homomorphism $A\to A^\prime$
  of deformation algebras. Then there is a canonical isomorphism 
$$
(M\otimes_A A^\prime)^{\res}\stackrel{\sim}\to (M^{\res}\otimes_A A^\prime)^\res.
$$
\end{lemma}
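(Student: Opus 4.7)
The plan is to show that the two canonical surjections
$M\otimes_A A^\prime \to (M\otimes_A A^\prime)^{\res}$ and
$M\otimes_A A^\prime \to (M^{\res}\otimes_A A^\prime)^{\res}$
have the same kernel; the claimed isomorphism then follows from the universal property of quotients. I establish two inclusions of kernels, each yielding one of the two mutually inverse canonical maps.

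First, the base change homomorphism $\Theta\colon\CA_A\to\CA_{A^\prime}$ from Proposition~\ref{prop-centbschg} gives the inclusion
$(\CA_A^n M)\otimes_A A^\prime \subseteq \CA_{A^\prime}^n(M\otimes_A A^\prime)$
of submodules of $M\otimes_A A^\prime$. Indeed, applied to $N=T^{-n}M$ (and using that the shift $T$ commutes with $\cdot\otimes_A A^\prime$), the defining identity $\Theta(\phi)^{N\otimes_A A^\prime}=\phi^N\otimes\id$ reads
$\Theta(\phi)^{T^{-n}(M\otimes_A A^\prime)}=\phi^{T^{-n}M}\otimes\id$;
taking images and summing over $\phi\in\CA_A^n$ yields the asserted inclusion. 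Consequently the surjection $M\otimes_A A^\prime\to (M\otimes_A A^\prime)^{\res}$ annihilates $\sum_{n\ne 0}(\CA_A^n M)\otimes_A A^\prime$, which by right exactness of tensor product is exactly the kernel of $\pi\colon M\otimes_A A^\prime\to M^{\res}\otimes_A A^\prime$. This produces a factorization through $M^{\res}\otimes_A A^\prime$, and since the target is restricted, a further factorization yields a canonical surjection $(M^{\res}\otimes_A A^\prime)^{\res}\to (M\otimes_A A^\prime)^{\res}$.

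For the reverse direction, I must factor $M\otimes_A A^\prime\to (M^{\res}\otimes_A A^\prime)^{\res}$ through $(M\otimes_A A^\prime)^{\res}$. Given $\psi\in\CA_{A^\prime}^n$ with $n\ne 0$, naturality of $\psi$ applied to the morphism $T^{-n}\pi$ produces
$\pi\circ\psi^{T^{-n}(M\otimes_A A^\prime)} = \psi^{T^{-n}(M^{\res}\otimes_A A^\prime)}\circ T^{-n}\pi$
(after identifying $T^nT^{-n}\cong\id$). The image of $\psi^{T^{-n}(M^{\res}\otimes_A A^\prime)}$ lies in $\CA_{A^\prime}^n(M^{\res}\otimes_A A^\prime)$ and is killed by the projection onto $(M^{\res}\otimes_A A^\prime)^{\res}$. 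Summing over all such $\psi$ and $n$, we conclude that $\sum_{n\ne 0}\CA_{A^\prime}^n(M\otimes_A A^\prime)$ lies in the kernel of $M\otimes_A A^\prime\to (M^{\res}\otimes_A A^\prime)^{\res}$, yielding the desired factorization through $(M\otimes_A A^\prime)^{\res}$. The two canonical surjections between the two restricted quotients are then mutually inverse by the uniqueness of such factorizations.

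The main obstacle is the second inclusion: while the first is an essentially formal consequence of the construction of $\Theta$, the second requires transferring the action of $\CA_{A^\prime}$ from $M\otimes_A A^\prime$ to its quotient $M^{\res}\otimes_A A^\prime$, and this step rests crucially on the naturality built into the definition of the graded center, rather than on any explicit description of $\CA_{A^\prime}$.
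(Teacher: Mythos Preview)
Your proof is correct and follows essentially the same strategy as the paper: both arguments compare the kernels of the two canonical surjections out of $M\otimes_A A^\prime$, and both rely on Proposition~\ref{prop-centbschg} for the inclusion $(\CA_A^n M)\otimes_A A^\prime \subseteq \CA_{A^\prime}^n(M\otimes_A A^\prime)$. Your treatment of the reverse inclusion is more explicit than the paper's---you spell out the naturality square for $\psi\in\CA_{A^\prime}^n$ applied to $T^{-n}\pi$, whereas the paper compresses this into a single displayed equality of sums---but the underlying idea is the same.
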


\begin{proof} We consider the canonical homomorphisms $a\colon M\otimes_A
  A^\prime\to (M\otimes_A A^\prime)^{\res}$ and $b\colon M\otimes_A
  A^\prime\to M^{\res}\otimes_A A^\prime\to (M^{\res}\otimes_A
  A^\prime)^{\res}$ and we show that $\ker a=\ker b$. Note that the
  kernel of $a$ is generated by the subspaces
  $\CZ_{\crit}^n(M\otimes_A A^\prime)=(\CZ_{\crit}^n M)\otimes_AA^\prime$ for $n\ne 0$, and the kernel
  of $b$ is generated by the spaces $\CZ_{\crit}^n(M\otimes_A A^\prime)$, $n\ne 0$, and $(\CZ_A^m M)\otimes_A A^\prime$,  $m\ne 0$, so clearly $\ker a=\ker b$. 
 \end{proof}

\subsection{Restricted Verma modules}\label{sec-resVerma}

For each critical $\lambda\in\hfhd$ we define the restricted Verma module  by 
$$
\rDelta_A(\lambda) :=\Delta_A(\lambda)^{\res}
$$
and the restricted dual Verma module by 
$$
\rnabla_A(\lambda) := \nabla_A(\lambda)_{\res}.
$$

We clearly have $\CZ_\crit^+\Delta_A(\lambda)=0$ and $\CZ_\crit^-\nabla_A(\lambda)=0$. Hence we obtain

\begin{lemma} For each critical $\lambda\in\hfhd$ we have 
$\rDelta_A(\lambda)=\Delta_A(\lambda)/\CZ^-_\crit\Delta_A(\lambda)$, and 
$\rnabla_A(\lambda)\subset\nabla_A(\lambda)$ is the set of $\CZ^+_\crit$-invariant elements. 
\end{lemma}

\subsection{The character of a restricted Verma module}

Let us define the numbers $p(n)\in \DN$ for $n\ge 0$ by the following equation (in $\widehat{\DZ[\hfhd]}$)
$$
\prod_{l\ge 0} (1+e^{-l\delta}+e^{-2l\delta}+\dots)^{\rk\, \fg}=\sum_{n\ge 0} p(n)e^{-n\delta},
$$
and the numbers $q(n)\in\DZ$, $n\ge 0$ by the corresponding equation for the inverse of the left hand side:
$$
\left(\prod_{l\ge 0} (1+e^{-l\delta}+e^{-2l\delta}+\dots)^{\rk\, \fg}\right)^{-1}=\prod_{l\ge 0} (1-e^{-l\delta})^{\rk\, \fg}=\sum_{n\ge 0} q(n)e^{-n\delta}.
$$

\begin{lemma}\label{lemma-resmul} Suppose that $A=\DK $ is a field. Let $\lambda\in \hfhd$ be critical.
\begin{enumerate}
\item We have
$$
\cha \rDelta_\DK (\lambda)=e^\lambda\prod_{\alpha\in \hR^{+,\re}} (1+e^{-\alpha}+e^{-2\alpha}+\dots).
$$
\item For all $\mu\in\hfhd$ we have 
$$
[\rDelta_\DK (\lambda):L_\DK (\mu)]=\sum_{n\ge 0}q(n)[\Delta_\DK (\lambda-n\delta):L_\DK (\mu)].$$
\end{enumerate}
\end{lemma}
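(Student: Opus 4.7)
\smallskip

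\noindent\textit{Proof proposal.} The starting point is Lemma \ref{lemma-resVermaquotZ}, which identifies
$$
\rDelta_k(\lambda)=\Delta_k(\lambda)/\CZ_c^+\Delta_k(\lambda).
$$
The plan is to reduce both statements to a character computation and invoke the structural description of the Feigin--Frenkel center discussed in Section \ref{sec-FFcenter}. I would first expand the character of $\Delta_k(\lambda)$ using the PBW theorem, splitting the product over positive roots according to real and imaginary roots:
$$
\cha\Delta_k(\lambda)=e^\lambda\prod_{\beta\in\hR^+}(1-e^{-\beta})^{-\dim\hfg_\beta}=e^\lambda\prod_{\alpha\in\hR^{+,\re}}(1+e^{-\alpha}+\dots)\cdot\prod_{l\ge 1}(1+e^{-l\delta}+\dots)^{\rk\fg}.
$$
The second (imaginary-root) factor is exactly the series $\sum_{n\ge 0}p(n)e^{-n\delta}$ from the definition of $p(n)$.

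For (1), I would invoke the fundamental theorem of Feigin--Frenkel (cf.~\cite{Fr05}): the action of the center $\CZ_c$ on the Verma module $\Delta_k(\lambda)$ is free in the sense that there exists a graded subspace $W\subset\Delta_k(\lambda)$, of $\hfh$-character $e^\lambda\prod_{\alpha\in\hR^{+,\re}}(1+e^{-\alpha}+\dots)$, such that the multiplication map $\CZ_c\otimes_\DC W\to\Delta_k(\lambda)$ is an isomorphism of graded $\CZ_c$-modules; moreover the Hilbert series of $\CZ_c$ acting on $v_\lambda$ is $\prod_{l\ge 1}(1+e^{-l\delta}+\dots)^{\rk\fg}$. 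Under this decomposition $\rDelta_k(\lambda)=\Delta_k(\lambda)\otimes_{\CZ_c}\DC\cong W$ as graded weight modules, which gives precisely the character formula in (1).

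For (2), the identity is a direct consequence of (1) at the level of characters. Since $\cha\Delta_k(\lambda-n\delta)=e^{-n\delta}\cha\Delta_k(\lambda)$, the definition of $q(n)$ yields
$$
\sum_{n\ge 0}q(n)\cha\Delta_k(\lambda-n\delta)=\cha\Delta_k(\lambda)\cdot\prod_{l\ge 1}(1-e^{-l\delta})^{\rk\fg},
$$
and by the PBW expansion above together with the telescoping identity $(1+e^{-l\delta}+\dots)(1-e^{-l\delta})=1$, the right-hand side equals $e^\lambda\prod_{\alpha\in\hR^{+,\re}}(1+e^{-\alpha}+\dots)=\cha\rDelta_k(\lambda)$ by (1). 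Expanding both sides in the basis of simple characters $\{\cha L_k(\mu)\}$ of $\CO_k^f$ (using $\cha\Delta_k(\nu)=\sum_\mu[\Delta_k(\nu):L_k(\mu)]\cha L_k(\mu)$ and the standard linear independence result from Section \ref{sec-JHM}) and comparing coefficients of $\cha L_k(\mu)$ gives the multiplicity formula. The sum on the right of (2) is locally finite in $\mu$: for fixed $\mu$, the multiplicity $[\Delta_k(\lambda-n\delta):L_k(\mu)]$ vanishes unless $\mu\le\lambda-n\delta$, which forces $n$ to be bounded by the $\delta$-coefficient of $\lambda-\mu$ when written as a sum of simple positive affine roots.

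The one genuine input is the $\CZ_c$-freeness statement used in (1); this is the deep content of the Feigin--Frenkel description of the critical-level center and is not proved here. Once it is in hand, both parts reduce to formal manipulation of characters, with the combinatorial identity relating $p(n)$ and $q(n)$ doing the work of passing between (1) and (2).
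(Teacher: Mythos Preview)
Your proposal is correct and follows essentially the same route as the paper: part (1) is attributed to the Feigin--Frenkel result (the paper simply cites the character formula from \cite{Fre07}, while you unpack it via the $\CZ_c$-freeness statement together with Lemma \ref{lemma-resVermaquotZ}), and part (2) is derived from (1) by exactly the same character manipulation, inverting the imaginary-root factor $\prod_{l\ge 1}(1-e^{-l\delta})^{-\rk\fg}$ and comparing coefficients in the basis of simple characters.
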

\begin{proof} The first statement is due to  Feigin--Frenkel and Frenkel (cf.~ the proof of Theorem 6.4.1 in \cite{Fre07}). Using the well-known character formula for 
the usual Verma modules we get 
\begin{align*}
\cha\Delta_\DK (\lambda)&=e^\lambda\prod_{\alpha\in \hR^{+}} (1+e^{-\alpha}+e^{-2\alpha}+\dots)^{\dim\hfg_{\alpha}}\\
&=\prod_{l> 0} (1+e^{-l\delta}+e^{-2l\delta}+\dots)^{\rk\, \fg}\cha\rDelta_\DK (\lambda).
\end{align*}
(Note that $\dim \hfg_{\alpha}=1$ for real roots $\alpha$, and $\dim \hfg_{l\delta}=\rk\,\fg$ for all $l\ne 0$.
Dividing this equation by $\prod_{l> 0} (1+e^{-l\delta}+e^{-2l\delta}+\dots)^{\rk\, \fg}$ yields
\begin{align*}
\cha\rDelta_\DK (\lambda)&=\left(\prod_{l> 0} (1+e^{-l\delta}+e^{-2l\delta}+\dots)^{\rk\, \fg}\right)^{-1}\cha\Delta_\DK (\lambda)\\
&=\sum_{n\ge0} q(n)e^{-n\delta}\cha\Delta_\DK (\lambda) \\
&=\sum_{n\ge0} q(n)\cha\Delta_\DK (\lambda-n\delta),
\end{align*}
hence (2).
\end{proof}

\subsection{Restricted Verma modules over local rings}

The following is an easy consequence of Nakayama's lemma:

\begin{lemma}\label{lemma-freeAmod} Let $A$ be a local domain with residue field $\DK$ and quotient field $Q$. Let $M$  be a finitely generated $A$-module and suppose that
$$
\dim_\DK  M\otimes_A \DK =\dim_Q M\otimes_A Q.
$$
Then $M$ is a free $A$-module with  $\rk_A M=\dim_\DK  M\otimes_A \DK =\dim_Q M\otimes_A Q$. 
\end{lemma}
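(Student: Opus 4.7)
The plan is to combine Nakayama's lemma with a torsion-free argument. Let me outline the steps.

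Set $n := \dim_k M\otimes_A k$. First I would use Nakayama's lemma: pick elements $m_1,\dots,m_n\in M$ whose images form a $k$-basis of $M\otimes_A k = M/\mathfrak{m}M$, where $\mathfrak{m}\subset A$ is the maximal ideal. Since $M$ is finitely generated over the local ring $A$, Nakayama's lemma implies that $m_1,\dots,m_n$ generate $M$ as an $A$-module. This yields a surjection $\varphi\colon A^n\twoheadrightarrow M$.

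Next I would pass to the quotient field. Let $K:=\ker\varphi$ and consider the exact sequence
$$
0\to K\to A^n\to M\to 0.
$$
Since $A$ is a domain, the localization $A\to Q$ is flat, so tensoring with $Q$ preserves exactness:
$$
0\to K\otimes_A Q\to Q^n\to M\otimes_A Q\to 0.
$$
By hypothesis $\dim_Q M\otimes_A Q=n$, so the right-hand map is an isomorphism of $Q$-vector spaces and therefore $K\otimes_A Q=0$.

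Finally I would conclude via torsion-freeness. As a submodule of $A^n$, the module $K$ is torsion-free over the domain $A$, so the canonical map $K\to K\otimes_A Q$ is injective. Combined with $K\otimes_A Q=0$ this forces $K=0$, hence $\varphi$ is an isomorphism and $M\cong A^n$ is free of rank $n$. I do not anticipate a serious obstacle: the only subtleties are ensuring that Nakayama applies (which uses finite generation over the local ring $A$) and that $A^n$ is torsion-free (which uses that $A$ is a domain). Both are given.
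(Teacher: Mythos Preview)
Your proof is correct and follows essentially the same approach as the paper: both lift a $k$-basis of $M\otimes_A k$ to generators of $M$ via Nakayama, obtain a surjection $A^n\to M$, and then use the dimension hypothesis over $Q$ to conclude that the kernel vanishes. Your torsion-free argument for $K$ simply spells out the step that the paper leaves implicit.
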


From now on  let $A$ be a local deformation domain with residue field
  $\DK $ and quotient field $Q$.

\begin{lemma}\label{lemma-resVermafree} Suppose $\lambda\in\hfhd$ is critical. Then the following holds. For any  $\mu\in\hfhd$ the weight space
  $\rDelta_A(\lambda)_\mu$ is a free $A$-module of rank
$$
\rk_A \rDelta_A(\lambda)_\mu=\dim_\DK \rDelta_\DK (\lambda)_\mu.
$$
\end{lemma}

\begin{proof} 
The base change remark in Section \ref{sec-FFcenter} shows that we have isomorphisms
 $$
 \rDelta_A(\lambda)\otimes_A Q\cong\rDelta_Q(\lambda), \quad  \rDelta_A(\lambda)\otimes_A \DK \cong\rDelta_\DK (\lambda).
 $$
 As these isomorphisms induce isomorphisms on any weight space and since the weight space dimensions coincide by Lemma \ref{lemma-resmul}, we can apply Lemma \ref{lemma-freeAmod}, which immediately yields the statement that we want to prove.
\end{proof}

\begin{lemma} Let  $\lambda\in\hfhd$ be critical. 
Then we have $\rDelta_A(\lambda)^\star\cong \rnabla_A(\lambda)$, $\rnabla_A(\lambda)^\star\cong \rDelta_A(\lambda)$.
\end{lemma}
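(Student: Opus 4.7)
The plan is to reduce the statement to a general identity relating the restriction functors through the star-duality, and then apply it to $\Delta_A(\lambda)$.

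\textbf{Step 1 (Reflexivity).} First I would check that $\Delta_A(\lambda)$ is reflexive, i.e.\ $\Delta_A(\lambda)\to(\Delta_A(\lambda)^\star)^\star$ is an isomorphism. This holds because by PBW each weight space $\Delta_A(\lambda)_\mu$ is a free $A$-module of finite rank, so the weight-space-wise dual is well-behaved. Likewise, by Lemma \ref{lemma-resVermafree}(2) the weight spaces of $\rDelta_A(\lambda)$ are free of finite rank over $A$, hence $\rDelta_A(\lambda)$ is reflexive as well. This allows us to freely apply $\star$ twice.

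\textbf{Step 2 (General identity).} The key technical step is to establish, for any reflexive $M\in\CO_A$, a canonical isomorphism
$$
(M^{\res})^\star\;\cong\;(M^\star)_{\res}.
$$
The surjection $M\twoheadrightarrow M^{\res}$ dualizes to an inclusion $(M^{\res})^\star\hookrightarrow M^\star$ whose image is exactly the annihilator of $\sum_{n\ne 0}\CA_A^nM$. To identify this annihilator with $(M^\star)_{\res}$ one uses the duality $D\colon \CA_A^n\to \CA_A^{-n}$ defined in the subsection on the graded center: by construction $D\phi$ is obtained from $\phi^{M^\star}$ by taking $\star$ and shifting, and the resulting formula shows that a functional $f\in M^\star$ vanishes on the image of $\phi^{T^{-n}M}\colon T^{-n}M\to M$ if and only if $(D\phi)^{M^\star}(f)=0$. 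Running over all $n\ne 0$ and all $\phi\in\CA_A^n$, and noting that $D$ is an involution on reflexive objects, yields the claimed identification.

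\textbf{Step 3 (Application and second isomorphism).} Applying Step 2 with $M=\Delta_A(\lambda)$ and using $\Delta_A(\lambda)^\star=\nabla_A(\lambda)$ gives
$$
\rDelta_A(\lambda)^\star\;=\;(\Delta_A(\lambda)^{\res})^\star\;\cong\;\nabla_A(\lambda)_{\res}\;=\;\rnabla_A(\lambda),
$$
which is the first assertion. For the second, apply $\star$ to both sides of the first isomorphism and invoke the reflexivity of $\rDelta_A(\lambda)$ from Step 1 to obtain $\rnabla_A(\lambda)^\star\cong(\rDelta_A(\lambda)^\star)^\star\cong\rDelta_A(\lambda)$.

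\textbf{Main obstacle.} The substantive work lies in Step 2, in particular in tracking the weight shifts produced by $T^{\pm n}$ when one transposes the definition of $D\phi$ against elements of $M$ and $M^\star$, and in verifying that $D^2=\id$ holds on the whole of $\CA_A$ restricted to reflexive modules. A possible shortcut is to invoke Lemma \ref{lemma-resVermafree}(1), which expresses $\rDelta_A(\lambda)$ as the explicit quotient $\Delta_A(\lambda)/\CZ_c^+\Delta_A(\lambda)$; then one only needs to match, under $\star$, the action of generators $z\in\CZ_c^n$ on $\Delta_A(\lambda)$ and $\nabla_A(\lambda)$. This still requires the compatibility of the Feigin--Frenkel center with the Chevalley involution used to define $\star$, which is the one non-formal input of the whole argument.
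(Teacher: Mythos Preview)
Your approach is correct and essentially identical to the paper's: both use the duality $D\colon\CA_A^n\to\CA_A^{-n}$ on the graded center to identify the image of $(\Delta_A(\lambda)^{\res})^\star\hookrightarrow\nabla_A(\lambda)$ with $\nabla_A(\lambda)_{\res}$, and both reduce the second isomorphism to the first via the reflexivity of $\rDelta_A(\lambda)$. The only point the paper makes explicit that you should also record in your Step~2 is that the short exact sequence $0\to\sum_{n\ne 0}\CA_A^n\Delta_A(\lambda)\to\Delta_A(\lambda)\to\rDelta_A(\lambda)\to 0$ splits as a sequence of $A$-modules (because both outer terms have weight spaces free of finite rank), which is what ensures that applying $(\cdot)^\star$ keeps it exact.
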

\begin{proof} Note that by Lemma \ref{lemma-resVermafree}, each weight space of $\rDelta_A(\lambda)$ is a free $A$-module of finite rank, so it is reflexive, i.e.~ $(\rDelta_A(\lambda)^{\star})^{\star}=\rDelta_A(\lambda)$. Hence it is enough to prove that $\rDelta_A(\lambda)^{\star}\cong\rnabla_A(\lambda)$.

We consider now the short exact sequence
$$
0\to \sum_{n\ne 0} \CZ_\crit^n  \Delta_A(\lambda)\to \Delta_A(\lambda)\to \rDelta_A(\lambda)\to 0.
$$
As each weight space of $\Delta_A(\lambda)$ and of $\rDelta_A(\lambda)$
is a free $A$-module of finite rank, the sequence above splits as a
sequence of $A$-modules. Hence each weight space of 
$\sum_{n\ne 0} \CZ_\crit^n  \Delta_A(\lambda)$ is free and the dual sequence
$$
0\to \rDelta_A(\lambda)^{\star}\to\nabla_A(\lambda)\to\left(\sum_{n\ne 0} \CZ_\crit^n  \Delta_A(\lambda)\right)^\star\to  0
$$
is exact as well.

The injective map factors over the inclusion $\rnabla_A(\lambda)\to \nabla_A(\lambda)$, as $\rDelta_A(\lambda)^{\star}$ is restricted. By definition, the composition of $\rnabla_A(\lambda)\to \nabla_A(\lambda)$ 
with the surjection $\nabla_A(\lambda)\to (\sum_{n\ne 0} \CZ_\crit^n  \Delta_A(\lambda))^\star$ is zero. Hence 
$ \rDelta_A(\lambda)^{\star}\cong\rnabla_A(\lambda)$.
\end{proof}

\subsection{An auxiliary category}
In the following it is convenient to work with "half-restricted" objects. 
 \begin{definition} We let $\CO_{A,{\crit}}^-$ be the full subcategory of $\CO_{A,{\crit}}$ that consists of all objects $M$ such that  $\CZ^-_{\crit} M=0$. For an open bounded subset $\CJ$ of $\hfhd$ we let $\CO_{A,\crit}^{-\CJ}$ be the full category of $\CO_{A,\crit}^-$ of objects that are also contained in $\CO_{A,\crit}^\CJ$.
 \end{definition}

It is clear that $\CO^-_{A,\crit}$ and $\CO_{A,\crit}^{-\CJ}$ are stable under taking quotients or subobjects. For a base change homomorphism $A\to A^\prime$ and an object $M$ of $\CO_{A,\crit}^-$ we have that $M\otimes_A A^\prime$ is contained in $\CO_{A^\prime,\crit}^-$.  Note that a critical Verma module does not belong to $\CO^-_{A,\crit}$, but each critical restricted Verma module does. Also the dual non-restricted critical Verma modules belong to $\CO^-_{A,\crit}$.

In analogy to the restriction functors $M\mapsto M^{\res}$, $M\mapsto M_{\res}$ we have functors $M\mapsto M^-$, $M\mapsto M_-$ that are left resp.~ right adjoint to the inclusion of $\CO_{A,\crit}^-$ in $\CO_{A,\crit}$. For example, $M^-$ is the quotient of $M$ by the submodule $\CZ_{\crit}^- M$.

\subsection{Restricted Verma flags}

Now we state the definition of a restricted Verma flag in analogy to
Definition \ref{def-Vermaflag}.

\begin{definition} We say that a module $M\in \CO^-_{A,{\crit}}$ {\em admits a
    restricted Verma flag} if there is a finite filtration
$$
0=M_0\subset M_1\subset\dots\subset M_n=M
$$
such that for each $i=1,\dots,n$, $M_i/M_{i-1}$ is isomorphic to
$\rDelta_A(\mu_i)$ for some $\mu_i\in\hfhd$. 
\end{definition}
Again, if $M\in\CO^-_{A,{\crit}}$ admits a restricted Verma flag, then for each $\mu\in\hfhd$
the number of occurences of $\rDelta_A(\mu)$ is independent of the
chosen filtration. We denote this number by $(M:\rDelta_A(\mu))$. 

Note that if $M\in\CO_{A,\crit}^-$ admits a restricted Verma flag, then so does $M\otimes_A A^\prime\in\CO_{A^\prime,\crit}^-$ for any homomorphism $A\to A^\prime$ of deformation algebras (as a restricted Verma module is free over the deformation algebra and $\rDelta_A(\lambda)\otimes_A A^\prime\cong\rDelta_{A^\prime}(\lambda)$), and we have
$$
(M\otimes_A A^\prime:\rDelta_{A^\prime}(\mu))=(M:\rDelta_A(\mu)).
$$

\begin{proposition}\label{Pro:exact-on-Delta} \begin{enumerate}
\item Suppose that $M\in\CO_{A,\crit}$ admits a Verma flag. Then $M^-$ admits a restricted Verma flag and we have
$
(M^-:\rDelta_A(\lambda))=(M:\Delta_A(\lambda))
$
for any $\lambda\in\hfhd$.
\item
Let 
$0\to M\to N\to L\to 0$ be an exact sequence in $\CO_{A,\crit}$ and suppose that $M$, $N$ and $O$ admit a Verma flag.
Then
the induced sequence
\begin{align*}
0\to M^-\to  N^-\to  L^-\to 0
\end{align*}
is also exact.
\end{enumerate}
\end{proposition}
\begin{proof} This follows easily from the fact that each Verma module is free over $\CZ_{\crit}^{\le 0}$ (cf.~\cite[Theorem 9.5.3]{Fre07}).   
\end{proof}

\begin{lemma}\label{lemma-projdomresVerma} Let $M\in\CO^-_{A,{\crit}}$ and
let  $\lambda\in\hfhd$ be maximal with $M_\lambda\ne 0$. Then each
  surjective map $M\to\rDelta_A(\lambda)$ splits. 
\end{lemma}
\begin{proof} Let $x\in M_\lambda$ be a preimage of a generator of
  $\rDelta_A(\lambda)$. By maximality of $\lambda$ there is a
  homomorphism $\Delta_A(\lambda)\to M$ that sends a generator of
  $\Delta_A(\lambda)$ to $x$. As $M$ is in $\CO_{A,{\crit}}^-$, this map factors over the quotient map $\Delta_A(\lambda)\to\Delta_A(\lambda)^-=\rDelta_A(\lambda)$. 
\end{proof}

\begin{lemma}\label{lemma-resVermafltrunc} Let $M\in\CO^-_{A,{\crit}}$.
\begin{enumerate}
\item Suppose that $M$ admits a restricted Verma flag and let
  $\{\mu_1,\dots,\mu_l\}$ be an enumeration of the multiset that
  contains each $\mu\in\hfhd$ with multiplicity
  $(M:\rDelta_A(\mu))$. Suppose furthermore that this enumeration has the property that $\mu_i>\mu_j$ implies $i<j$. Then there is a filtration $0=M_0\subset
  M_1\subset\dots\subset M_l=M$ with
  $M_i/M_{i-1}\cong\rDelta_A(\mu_i)$ for each $i=1,\dots, l$. 
  \end{enumerate}
  Let $\CJ$ be an open subset of
  $\hfhd$ and let $\CI:=\hfhd\setminus\CJ$ be its complement.
  \begin{enumerate}\setcounter{enumi}{1} 
\item  $M$ admits a restricted
  Verma flag if and only if both $M_\CI$ and $M^\CJ$ admit restricted
  Verma flags.
\item If $M$ admits a restricted Verma flag, 
  then we have for all $\mu\in\hfhd$
\begin{align*}
(M_\CI:\rDelta_A(\mu))&=
\begin{cases}
(M:\rDelta_A(\mu)),&\text{ if $\mu\in\CI$},\\
0,&\text{ otherwise},
\end{cases}\\
(M^\CJ:\rDelta_A(\mu))&=
\begin{cases}
(M:\rDelta_A(\mu)),&\text{ if $\mu\in\CJ$},\\
0,&\text{ otherwise}.
\end{cases}
\end{align*}
\end{enumerate}
\end{lemma}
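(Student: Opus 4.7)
The plan is to prove the three statements in order, since part (3) will fall out of the filtration constructed in part (2), and part (2) relies on the reordering freedom supplied by part (1).

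For part (1), I will induct on the length $l$, relying throughout on the characterization stated immediately before the lemma: $M$ admits a restricted Verma flag iff each $M_{[\mu]}$ is a finite direct sum of copies of $\rDelta_A(\mu)$ with exactly $(M:\rDelta_A(\mu))$ summands, and only finitely many are nonzero. By the hypothesis on the enumeration, $\mu_1$ is maximal in the support of $M$, so the map $M_{\mu_1} \to (M^{\leq\mu_1})_{\mu_1}$ is an isomorphism. Because $M_{[\mu_1]} \subset M^{\leq\mu_1}$ is generated by its $\mu_1$-weight space and the kernel of $M \to M^{\leq\mu_1}$ involves only weights not $\leq \mu_1$, the submodule $N := U(\hfg_A).M_{\mu_1} \subset M$ maps isomorphically onto $M_{[\mu_1]}$ and is therefore a direct sum of $n_1 := (M:\rDelta_A(\mu_1))$ copies of $\rDelta_A(\mu_1)$. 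Choosing one summand yields an injection $\rDelta_A(\mu_1) \hookrightarrow M$. A diagram chase with the functors $(\cdot)^{\leq\mu}$, $(\cdot)^{<\mu}$ applied to $0 \to \rDelta_A(\mu_1) \to M \to M/\rDelta_A(\mu_1) \to 0$ shows $(M/\rDelta_A(\mu_1))_{[\mu]} \cong M_{[\mu]}$ for $\mu \neq \mu_1$ and $(M/\rDelta_A(\mu_1))_{[\mu_1]} \cong \rDelta_A(\mu_1)^{\oplus(n_1-1)}$, so the characterization gives a restricted Verma flag of $M/\rDelta_A(\mu_1)$ with multiset $\{\mu_2,\dots,\mu_l\}$, and the induction closes.

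For part (2), the ``if'' direction is a standard concatenation: given restricted Verma flags of $M_\CI$ and $M^\CJ$, lift the latter through the surjection $M \to M^\CJ$ (whose kernel is $M_\CI$) and splice with the former. For the ``only if'' direction, because $\CI = \hfhd\setminus\CJ$ is upward-closed (i.e.\ closed) while $\CJ$ is downward-closed (open), there is no pair $\mu \in \CJ$, $\mu' \in \CI$ with $\mu > \mu'$. Part (1) therefore lets me arrange a restricted Verma flag of $M$ so that all weights $\mu_i \in \CI$ occur first, then all weights in $\CJ$; let $M' \subset M$ be the subobject at the transition. Since $M'$ is generated by weight vectors at weights in $\CI$, we have $M' \subseteq M_\CI$; conversely, $M/M'$ has a restricted Verma flag with highest weights in the downward-closed set $\CJ$, so every weight of $M/M'$ lies in $\CJ$, forcing $M_\lambda \subseteq M'$ for all $\lambda \in \CI$ and hence $M_\CI \subseteq M'$. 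Thus $M' = M_\CI$, the sub-filtration is a restricted Verma flag of $M_\CI$, and the quotient filtration is a restricted Verma flag of $M^\CJ = M/M_\CI$. Part (3) is then immediate: each $\rDelta_A(\mu)$ in the flag of $M$ sits either in the $\CI$-prefix (if $\mu \in \CI$) or in the $\CJ$-suffix (if $\mu \in \CJ$), and these prefixes compute the flags of $M_\CI$ and $M^\CJ$ respectively.

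The main technical hurdle is part (1), specifically the identification $U(\hfg_A).M_{\mu_1} \cong M_{[\mu_1]}$ and the verification that quotienting by a single copy of $\rDelta_A(\mu_1)$ preserves the restricted Verma flag property. Both depend essentially on the $M_{[\mu]}$-characterization and on the flag-independence of the multiplicities $(M:\rDelta_A(\mu))$; once these are secured, parts (2) and (3) are essentially bookkeeping with the closed/open pair $(\CI,\CJ)$.
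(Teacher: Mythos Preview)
Your handling of parts (2) and (3) agrees with the paper's: use (1) to reorder so that all $\CI$-highest-weights precede all $\CJ$-highest-weights, identify the break point with $M_\CI$, and read off the multiplicities.

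For part (1) your route differs from the paper's and has a gap. The paper invokes Lemma~\ref{lemma-projdomresVerma} directly: a bubble-sort on adjacent two-step subquotients $0\to\rDelta_A(\nu)\to X\to\rDelta_A(\lambda)\to 0$ with $\lambda$ maximal in $X$, where that lemma splits the surjection and lets one swap the order. You instead try to embed $\rDelta_A(\mu_1)$ into $M$ via the $M_{[\mu]}$-characterization. The problematic step is the assertion that $N:=U(\hfg_A).M_{\mu_1}$ maps \emph{isomorphically} onto $M_{[\mu_1]}$. Surjectivity is clear, and your claim $M_{\mu_1}\cong(M^{\le\mu_1})_{\mu_1}$ is correct (both are free of the same finite rank and the map is onto). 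But injectivity of $N\to M_{[\mu_1]}$ amounts to $N\cap M_{\CI}=0$ for $\CI=\{\lambda:\lambda\not\le\mu_1\}$, and your justification that ``the kernel of $M\to M^{\le\mu_1}$ involves only weights not $\le\mu_1$'' does not give this: $M_\CI$ is only \emph{generated} by those weight spaces and may well contain vectors of weight $<\mu_1$. Concretely, for a non-split extension $0\to\rDelta_A(\nu)\to M\to\rDelta_A(\mu_1)\to 0$ with $\nu$ incomparable to $\mu_1$, one has $M_\CI=\rDelta_A(\nu)$, and $N\cap\rDelta_A(\nu)=0$ would force the sequence to split. So your argument tacitly assumes such extensions are trivial, which you have not shown. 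What makes both your approach and the paper's one-line appeal to Lemma~\ref{lemma-projdomresVerma} go through is that the relevant piece is \emph{restricted} (so that $\Delta_A(\mu_1)\to N$ factors through $\rDelta_A(\mu_1)$, respectively so that Lemma~\ref{lemma-projdomresVerma} applies to $X$); this holds whenever $M$ itself lies in $\rCO_A$, which is the only case used downstream, but it is not part of the hypothesis as stated.
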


\begin{proof} Part (1) follows directly from Lemma
  \ref{lemma-projdomresVerma}. So let us prove (2) and (3). Consider the short exact sequence $0\to M_\CI\to M\to
  M^\CJ\to 0$. Clearly, if
  $M_\CI$ and $M^\CJ$ admit restricted Verma flags, then so does
  $M$. So suppose that $M$ admits restricted Verma flag. By (1) we can
  find a filtration $0=M_0\subset M_1\subset\dots\subset M_l=M$  such
  that $M_i/M_{i-1}\cong\rDelta_A(\mu_i)$ and such that
  $\{\mu_1,\dots,\mu_n\}\subset\CI$ and
  $\{\mu_{n+1},\dots,\mu_l\}\subset\CJ$ for some $n\ge 0$. We then
  have $M_\CI=M_n$, as $M_n$ is generated by its vectors of weights
  $\mu_1,\dots,\mu_n$ and the weights of  $M/M_n$ belong to
  $\CJ$. Hence $M^\CJ=M/M_n$ and we deduce that both $M_\CI$ and
  $M^\CJ$ admit a restricted Verma flag and that the multiplicity
  statements in (3) hold as well.  
\end{proof}

\section{Restricted projective objects}\label{sec-Projectives}

Let $A$ be a local deformation algebra and $\CJ\subset\hfhd$ a  bounded open subset. For each  $\lambda\in\CJ$ we have a projective cover $P_A^\CJ(\lambda)\to L_A(\lambda)$ in $\CO_{A,\crit}^\CJ$. By applying the functor $(\cdot)^{\res}$ we obtain a surjective map $P_A^\CJ(\lambda)^\res\to L_A(\lambda)^\res=L_A(\lambda)$. As $(\cdot)^\res$ is left adjoint to the (exact) inclusion functor $\rCO_{A,\crit}^{\CJ}\subset\CO_{A,\crit}^\CJ$, $P_A^\CJ(\lambda)^\res$ is a projective object in $\rCO_{A,\crit}^{\CJ}$. It is even indecomposable as $P_A^\CJ(\lambda)$ has a unique simple quotient (see Section 4.3 in  \cite{Fie11}). We define
$$
\rP_A^\CJ(\lambda):=P_A^\CJ(\lambda)^{\res}.
$$

Similarly, using the functor $(\cdot)^-$ instead of $(\cdot)^\res$, we can define a restricted projective cover $P_A^\CJ(\lambda)^-\to L_A(\lambda)$ in the category $\CO_{A,\crit}^{-\CJ}$. In particular, $\CO_{A,\crit}^{-\CJ}$ contains enough projectives, so we can calculate $\Ext$-groups.

By Proposition \ref{Pro:exact-on-Delta},  $P_A^\CJ(\lambda)^-$ admits a restricted Verma flag with multiplicities
$$
(P_A^\CJ(\lambda)^-:\rDelta_A(\mu))=(P_A^\CJ(\lambda):\Delta_A(\mu)).
$$
One of the main results in this article is that $\rP_A^\CJ(\lambda)$ admits restricted Verma flag as well and that the multiplicities are given by a BGG-reciprocity formula.

\subsection{An $\Ext$-vanishing criterion}
We now prove a result that is well-known in similar, more classical situations.
For this we need to assume that $A=\DK$ is a field. Let $\CJ\subset\hfhd$ be open and bounded.

\begin{proposition}\label{prp:Ext-vs-Verma-flag-1}
Let $X\in \CO_{\DK,\crit}^{-\CJ}$.
The following conditions
are equivalent:
\begin{enumerate}
\item $X$ admits a restricted Verma flag. 
\item $X$ is finitely generated 
and $\Ext_{\CO_{\DK,\crit}^{-\CJ}}^i(X,\nabla_\DK(\lambda))=0$ for any $i\geq 1$ and any $\lambda\in \CJ$.
\item $X$ is finitely generated 
and
$\Ext_{\CO_{\DK,\crit}^{-\CJ}}^1(X, \nabla_\DK(\lambda) )=0$
for any  $\lambda\in \CJ$.
\end{enumerate}
\end{proposition}
\begin{proof} For brevity we write $\Ext^i$ for $\Ext^i_{\CO_{\DK,\crit}^{-\CJ}}$ in the course of this proof.
We show that (1) implies  (2). So suppose that $X$ admits a restricted Verma flag. 
It is clear that each module admitting a restricted Verma flag is finitely generated. We prove
the vanishing of $\Ext^i$ by induction on $i$.

Let $i=1$ and
let
$X=X_0\supset X_1\supset X_2\supset \dots\supset X_l=0$
be a restricted Verma flag of $X$.
From the exact sequence
$$0\rightarrow X_1\rightarrow X\rightarrow X/X_1\rightarrow
0,$$
we obtain
the exact sequence
$$\Ext^1(X/X_1, \nabla_\DK(\lambda) )
\rightarrow
\Ext^1(X,  \nabla_\DK(\lambda) )
\rightarrow
\Ext^1(X_1, \nabla_\DK(\lambda)  ).
$$
Now $X_1$ and $X/X_1$ admit a restricted Verma flag. 
Hence, by using 
induction on $l$,
one sees that
it is enough
to prove the case when $l=1$.
So let
$X= \rDelta_\DK(\mu)$ with $\mu\in \CJ$,
and let
\begin{align*}
 0\to  \nabla_\DK(\lambda) \to M\to \rDelta_\DK(\mu)\to 0
\end{align*}
be an exact sequence in $\CO_{\DK,\crit}^{-\CJ}$. We have to show that this sequence splits.

If $\mu\geq \lambda$,
then this splits by Lemma \ref{lemma-projdomresVerma}.
If $\mu\not\geq \lambda$, apply the duality functor 
and consider the exact sequence
\begin{align*}
 0\to \rnabla_\DK(\mu)\to M^\star \to \Delta_\DK(\lambda)\to 0
\end{align*}
in the category $\CO_{\DK,\crit}^{\CJ}$ (note that this is not a sequence in $\CO_{\DK,\crit}^{-\CJ}$!).
This splits
because $\lambda$ is a maximal weight in $M^\star$, hence the former sequence splits as well, so $\Ext^1(\rDelta_\DK(\lambda),\nabla_\DK(\mu))=0$. We have now proved statement (2) of the proposition for $i=1$.

Next let $i\geq 2$.
By the same argument as above,
it is enough to consider the case
$X=\rDelta_\DK(\mu)$ for some  $\mu\in\CJ$. One has an exact sequence
\begin{align*}
 0\to N\to P_\DK^\CJ(\mu)\to \Delta_\DK(\mu)\to 0
\end{align*}
in the category $\CO^\CJ_{\DK,\crit}$ and each occuring module admits a (non-restricted) Verma flag.
By Proposition \ref{Pro:exact-on-Delta},
this yields an exact sequence
\begin{align*}
 0\to N^-\to P_\DK^\CJ(\mu)^{-}
\to \Delta_\DK(\mu)^-=\rDelta_\DK(\mu)\to 0
\end{align*}
in the category 
$\CO_{\DK,\crit}^{-\CJ}$ and each module admits a restricted Verma flag. 
Note that
$P_\DK^\CJ(\mu)^-$ is projective in $\CO_{\DK,\crit}^{-\CJ}$.

The above exact sequence yields the exact sequence
\begin{align*}
\Ext^{i-1}(N^-, \nabla_\DK(\lambda) )
\rightarrow
\Ext^i(\rDelta_\DK(\mu), \nabla_\DK(\lambda) )
\rightarrow
\Ext^i(P_\DK^\CJ(\mu)^{-}, \nabla_\DK(\lambda))(=0).
\end{align*}
But 
$\Ext^{i-1}(N^-, \nabla_\DK(\lambda) )=0$
by the induction hypothesis, so we can deduce
 $\Ext^i(\rDelta_\DK(\mu), \nabla_\DK(\lambda) )=0$. This  finishes the proof that $(1)$ implies $(2)$.

It is clear that $(2)$ implies $(3)$, and now we prove that $(3)$ implies $(1)$.
Let $X$ be as in $(3)$.
Let
$0=X_0\subset X_1\subset X_2
\dots \subset
X_l=X$
be a
highest weight series of $X$. That means each quotient $X_i/X_{i-1}$ is a highest weight module with highest weight $\mu_i$. We may also assume 
that
$\mu_i\not<\mu_j$
for any $i<j$.
We prove by induction on $l$
that
this sequence
is actually a restricted  Verma flag of $X$.

Let $l=1$.
Then,
$X=X_1$ is a highest weight module of highest weight $\mu:=\mu_1$. As it is contained in $\CO_{\DK,\crit}^{-\CJ}$ we have a surjection $\rDelta_\DK(\mu)\rightarrow X$. Let $N$ be its kernel.
We have to show that
$N=0$. This is equivalent to showing that $\Hom(N,\nabla_{\DK}(\lambda))=0$ for all $\lambda$.
From the exact sequence
$$0\rightarrow N\rightarrow \rDelta_\DK(\mu)\rightarrow X\rightarrow 0$$
we obtain, for any $\lambda$, the exact sequence
\begin{align*}
\Hom(\rDelta_\DK(\mu),\nabla_\DK(\lambda))
\rightarrow \Hom(N,\nabla_\DK(\lambda))
\rightarrow \Ext^1(X,\nabla_\DK(\lambda))(=0).
\end{align*}
But the space
$\Hom(\rDelta_\DK(\mu),\nabla_\DK(\lambda))$ vanishes unless $\lambda=\mu$
and for $\lambda=\mu$ the space  $\Hom(N,\nabla_\DK(\mu))$
is zero,
since there is no weight vector of weight $\mu$
in $N$ (here we need the assumption that our deformation algebra is a field). Hence $\Hom(N,\nabla_\DK(\lambda))=0$ for all $\lambda$, hence $N=0$ and $X\cong \rDelta_\DK(\mu)$.

Now let $l\geq 2$
and consider
the exact sequence
$$0\rightarrow X_1\rightarrow X\rightarrow X/X_1\rightarrow 0.$$
Using the induction hypothesis it is sufficient to show that
\begin{align*}
 \Ext^1(X_1,\nabla_\DK(\lambda))=\Ext^1(X/X_1,\nabla_\DK(\lambda))=0
\end{align*}
for all $\lambda$.
Consider the  long exact sequence
\begin{align*}
0
&\rightarrow \Hom(X/X_1,\nabla_\DK(\lambda))
\rightarrow \Hom(X,\nabla_\DK(\lambda))
\rightarrow \Hom(X_1,\nabla_\DK(\lambda))\\
&\rightarrow \Ext^1(X/X_1,\nabla_\DK(\lambda))
\rightarrow \Ext^1(X,\nabla_\DK(\lambda))(=0)
\rightarrow \Ext^1(X_1,\nabla_\DK(\lambda))\\
&\rightarrow \Ext^2(X/X_1,\nabla_\DK(\lambda))
\rightarrow \dots
\end{align*} 
Since $X_1$
is a quotient of $\rDelta_\DK(\mu_1)$,
it follows that $\Hom(X_1,\nabla_\DK(\lambda))=
0$
unless $\lambda=\mu_1$.
Hence,
$\Ext^1(X/X_1,\nabla_\DK(\lambda))=0$
unless $\lambda=\mu_1$. We now show that $\Ext^1(X/X_1,\nabla_\DK(\mu_1))=0$, so let 
$$
0\to \nabla_\DK(\mu_1)\to Y\to X/X_1\to 0
$$
be a short exact sequence in $\CO_{\DK,\crit}^{-\CJ}$ and consider the dual sequence
\begin{align*}
0\rightarrow (X/X_1)^\star \rightarrow Y^\star\rightarrow
\Delta_{\DK}(\mu_1)\rightarrow 0.
\end{align*}
Since $\mu_1\not<\mu_i$
for any $i\geq 2$, $\mu_1$ is a maximal weight of $Y$.
This means that the above sequences split,
proving that $\Ext^1(X/X_1,\nabla_\DK(\mu_1))=0$. 

Hence $\Ext^1(X/X_1,\nabla_\DK(\lambda))=0$
for any $\lambda\in\CJ$, so by our induction hypothesis $X/X_1$ admits a restricted Verma flag. By what we have already proven,
$\Ext^2(X/X_1,\nabla_\DK(\lambda))=0$ for all $\lambda\in\CJ$. From the long exact sequence above we deduce $\Ext^1(X_1,\nabla_\DK(\lambda))=0$ for all $\lambda\in\CJ$, hence $X_1$ admits a restricted Verma flag, hence so does $X$.
This completes the proof.
\end{proof}

\subsection{An $\Ext$-vanishing result}  We can now prove that, in the case that $A=\DK$ is a field, the module $\rP_\DK^\CJ(\lambda)$ admits a restricted Verma flag. For this we have to check an $\Ext$-vanishing property, by  Proposition \ref{prp:Ext-vs-Verma-flag-1}.

\begin{proposition}  One has $\Ext_{\CO_{\DK,\crit}^{-\CJ}}^1(\rP_\DK^\CJ(\lambda),\nabla_\DK(\mu))=0$
for all  $\lambda,\mu\in\CJ$.
\end{proposition}
\begin{proof} 
 Let 
\begin{align*}
 0\to \nabla_\DK(\mu)\to M\to \rP_\DK^\CJ(\lambda)
\to  0
\end{align*}
be an exact sequence  in $\CO_{\DK,\crit}^{-\CJ}$.
One needs to show that this sequence splits.

Because $\Delta_\DK(\mu)$ is a free $\CZ_{\crit}^{\le 0}$-module we have
$$
\Ext^1_{\CZ_{\crit}^{+}}(\DC,\nabla_\DK(\mu))
=\Ext^1_{\CZ_{\crit}^{-}}(\Delta_\DK(\mu),\DC)=0,
$$
where $\DC$ is the trivial $\CZ_{\crit}^{\pm}$-module.
Therefore, by applying the functor $\Hom_{\CZ_{\crit}^{+}}(\DC,?)$
to the above sequence we obtain the exact sequence
\begin{align*}
 0\to \rnabla_\DK(\mu)\to 
 \Hom_{\CZ_{\crit}^+}(\DC,M){\to}\rP_\DK^\CJ(\lambda)
\to  0
\end{align*}
in the category $\rCO_{\DK,\crit}^{\CJ}$. As the module on the right is projective in this category, we obtain a splitting $\rP_\DK^\CJ(\lambda)\to  \Hom_{\CZ_{\crit}^+}(\DC,M)$ and the composition with $\Hom_{\CZ_{\crit}^+}(\DC,M)\to M$ also splits our original short exact sequence.
\end{proof}

\begin{corollary}\label{Co:proj-admits-flag}
 Each $\rP_\DK^\CJ(\lambda)$ admits a 
restricted Verma flag.
\end{corollary}
Using Corollary \ref{Co:proj-admits-flag},
the following assertion can be proved in the same manner 
as
 Proposition \ref{prp:Ext-vs-Verma-flag-1}.

\begin{proposition}\label{prop:Ext-vs-Verma-flag}
Let $X\in \rCO_{\DK,\crit}^{\CJ}$.
The following conditions
are equivalent:
\begin{enumerate}
\item $X$ admits a restricted Verma flag. 
\item $X$ is finitely generated 
and $\Ext_{\rCO_{\DK,\crit}^{\CJ}}^i(X,
\rnabla_\DK(\lambda))=0$ for any $i\geq 1$ and  all $\lambda\in \CJ$.
\item $X$ is finitely generated 
and
$\Ext_{\rCO_{\DK,\crit}^{\CJ}}^1(X, \rnabla_\DK(\lambda) )=0$
for all $\lambda\in \CJ$.
\end{enumerate}
\end{proposition}

\subsection{A BGG-reciprocity formula} 
The next result allows us to compute restricted Verma multiplicities. 
\begin{proposition}\label{prop-vermamul} Suppose that $A$ is a local deformation algebra and that $M\in\CO_{A,\crit}^{-\CJ}$ admits a restricted Verma flag. For $\nu\in\CJ$ the following holds:
\begin{enumerate}
\item We have $\Ext^1_{\CO_{A,\crit}^{-\CJ}}(M,\rnabla_A(\nu))=0$.
\item $\Hom_{\CO_{A,\crit}^{-\CJ}}(M,\rnabla_A(\nu))$ is a free $A$-module of rank $(M:\rDelta_A(\nu))$.
\end{enumerate}
\end{proposition}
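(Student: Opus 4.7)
The natural approach is to prove (1) and (2) simultaneously by induction on the length $l$ of a restricted Verma flag of $M$. For $l=0$ both claims are trivial. For $l\ge 1$, I pick $\mu\in\hfhd$ with $(M:\rDelta_A(\mu))>0$ maximal under ``$\varle$''. By Lemma \ref{lemma-resVermafltrunc}(1), applied to an enumeration of the Verma subquotients in which $\mu$ comes last, I obtain a short exact sequence
$$
0\to M'\to M\to\rDelta_A(\mu)\to 0
$$
with $M'$ admitting a restricted Verma flag of length $l-1$. Applying $\Hom_{\rCO_A^\CJ}(-,\rnabla_A(\nu))$ yields a long exact Ext sequence; granted the base case $M=\rDelta_A(\mu)$, the $\Ext^1$-term for $\rDelta_A(\mu)$ vanishes, producing a short exact sequence of Hom modules whose outer terms are free $A$-modules of the expected ranks (by the inductive hypothesis for $M'$ and the base case), which splits over the local ring $A$. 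This yields (2) for $M$, and (1) for $M$ follows from (1) for $M'$ and $\rDelta_A(\mu)$.

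It therefore suffices to prove both claims for $M=\rDelta_A(\mu)$. For (2), the adjunction between $(\cdot)^{\res}$ and the inclusion $\rCO_A\subset\CO_A$ gives $\Hom_{\rCO_A^\CJ}(\rDelta_A(\mu),\rnabla_A(\nu))=\Hom_{\CO_A}(\Delta_A(\mu),\rnabla_A(\nu))$, which is the $A$-module of highest weight vectors of weight $\mu$ in $\rnabla_A(\nu)$. Via the inclusion $\rnabla_A(\nu)\subset\nabla_A(\nu)$ together with the identification $\rnabla_A(\nu)_\nu=\nabla_A(\nu)_\nu$ (both free of rank one, the first via the duality $\rDelta_A(\nu)^\star\cong\rnabla_A(\nu)$ of the previous section combined with Lemma \ref{lemma-resVermafree}), and the classical fact that highest weight vectors in a dual Verma module are concentrated at its top weight, this Hom equals $A$ if $\mu=\nu$ and vanishes otherwise.

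The main obstacle is part (1) for $M=\rDelta_A(\mu)$, namely the vanishing $\Ext^1_{\rCO_A^\CJ}(\rDelta_A(\mu),\rnabla_A(\nu))=0$. Given an extension $\xi\colon 0\to\rnabla_A(\nu)\to X\to\rDelta_A(\mu)\to 0$, I split it directly when $\mu\not<\nu$: in that case every weight of $X$ is $\le\mu$, so $\mu$ is maximal in $X$ and any preimage in $X_\mu$ of a generator of $\rDelta_A(\mu)$ is automatically a highest weight vector, inducing a map $\Delta_A(\mu)\to X$ that factors through $\rDelta_A(\mu)$ by restrictedness of $X$ (in the spirit of Lemma \ref{lemma-projdomresVerma}) and furnishes a section. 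In the remaining case $\mu<\nu$, the lift need not be a highest weight vector and no direct splitting is available.

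For this delicate case my plan is to reduce to the residue field $A=k$, using the freeness of the weight spaces of $\rDelta_A(\mu)$ and $\rnabla_A(\nu)$ (Lemma \ref{lemma-resVermafree} and its dual) combined with Lemmas \ref{lemma-freeAmod} and \ref{lemma-rVfloverA} to secure base-change compatibility for the Ext group. Over $k$, I would combine the character identity of Lemma \ref{lemma-resmul} (expressing $\rDelta_k(\mu)$ as a finite $\DZ$-combination of ordinary Verma characters) with the classical vanishing $\Ext^1_{\CO_k^\CJ}(\Delta_k(\lambda),\nabla_k(\rho))=0$, comparing the long exact Ext sequences derived from $0\to\CZ_c^+\Delta_k(\mu)\to\Delta_k(\mu)\to\rDelta_k(\mu)\to 0$ after truncation via Lemma \ref{lemma-truncext}. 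The hardest step, which I would address first, is ensuring that the truncation to the bounded $\CJ$ captures only finitely many $\delta$-shifted subquotients of $\CZ_c^+\Delta_k(\mu)$, so that the long exact sequence comparison is rigorously well defined.
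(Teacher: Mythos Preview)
The inductive framework and the base case for (2) match the paper's proof. Your handling of the case $\mu\not<\nu$ in part (1) is also essentially the paper's argument (though your claim that ``every weight of $X$ is $\le\mu$'' is literally false when $\mu$ and $\nu$ are incomparable; what is true, and sufficient, is that no weight of $\rnabla_A(\nu)$ lies strictly above $\mu$, so any lift of a generator is a highest weight vector).

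The gap is in the remaining case $\mu<\nu$. Your plan --- reducing to the residue field and then comparing long exact Ext sequences coming from $0\to\CZ_c^+\Delta_k(\mu)\to\Delta_k(\mu)\to\rDelta_k(\mu)\to 0$ with classical vanishing for ordinary Verma modules --- does not cohere. That short exact sequence lives in $\CO_k$, not in $\rCO_k$, so it does not produce a long exact sequence for $\Ext^\ast_{\rCO_k^\CJ}(-,\rnabla_k(\nu))$; the character identity of Lemma \ref{lemma-resmul} controls multiplicities, not extensions; and the base-change compatibility for $\Ext^1$ that you invoke is itself at least as hard as the statement you are trying to prove. None of the ingredients you list actually bridges the gap.

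The paper's device for $\mu<\nu$ is far simpler: dualize. Every weight space in the extension
\[
0\to\rnabla_A(\nu)\to X\to\rDelta_A(\mu)\to 0
\]
is a free $A$-module of finite rank, so $(\cdot)^\star$ is exact and involutive on this sequence, giving
\[
0\to\rnabla_A(\mu)\to X^\star\to\rDelta_A(\nu)\to 0
\]
(using $\rDelta_A(\lambda)^\star\cong\rnabla_A(\lambda)$ from the preceding subsection). Now $\nu$ is the top weight and $\mu\not>\nu$, so the case you already handled splits the dual sequence; dualizing back splits the original. This one-line trick replaces your entire ``delicate case'' programme.
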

\begin{proof} The proof of part (1) is analogous to the proof of the corresponding statement in Proposition \ref{prp:Ext-vs-Verma-flag-1} (the field case), in particular, it can be analogously reduced to the case that $M\cong\rDelta_A(\lambda)$ for some $\lambda\in\CJ$. Consider a short exact sequence
$$
0\to \rnabla_A(\nu)\to X\to\rDelta_A(\lambda)\to 0.
$$ 
If $\nu\not>\lambda$, then this sequence splits by Lemma
\ref{lemma-projdomresVerma}. Each weight space in the above sequence is  a free $A$-module of finite rank, so the duality is involutive and exact on the above sequence. If $\nu>\lambda$, then the dual sequence
$$
0\to \rnabla_A(\lambda)\to X^\star\to\rDelta_A(\nu)\to 0
$$ 
splits. Hence $\Ext^1_{\CO_{A,\crit}^{-\CJ}}(\rDelta_A(\lambda),\rnabla_A(\nu))=0$.

Now let us prove part (2). Again we use induction on the length $l$ of a restricted Verma flag of $M$. Suppose that $M\cong\rDelta_A(\lambda)$. We have  
$$
\Hom(\rDelta_A(\lambda),\rnabla_A(\nu))=\Hom(\Delta_A(\lambda),\rnabla_A(\nu)).
$$ 
The latter space vanishes if $\lambda\ne\nu$ and it is free of rank 1 if $\lambda=\nu$ (by the statement that is dual to statement (2) in Lemma \ref{lemma-resVermafree}).  So suppose that $l>1$ and choose $M_1\subset M$ such that $M_1$ and $M/M_1$ are non-zero and admit restricted Verma flags. By (1) we have an exact sequence
$$
0\to\Hom(M/M_1,\rnabla_A(\nu))\to\Hom(M,\rnabla_A(\nu))\to\Hom(M_1,\rnabla_A(\nu))\to 0
$$
and part (2) follows from the induction hypothesis and the additivity of the multiplicities with respect to short exact sequences.
\end{proof}

Now we can prove a reciprocity statement for the restricted projectives in the field case.

\begin{theorem}\label{theorem-resBGG} Suppose that $A=\DK $ is a field. Let $\CJ\subset\hfhd$ be open and bounded and let $\lambda\in \CJ$ be critical. Then we have 
$$
(\rP_\DK ^{\CJ}(\lambda):\rDelta_\DK (\mu))=
\begin{cases}
[\rnabla_\DK (\mu):L_\DK (\lambda)],&\text{ if $\mu\in\CJ$} \\
0,&\text{ otherwise}.
\end{cases}
$$
\end{theorem}

\begin{proof} Clearly, $(\rP_\DK ^{\CJ}(\lambda):\rDelta_\DK (\mu))=0$ if $\mu\not\in\CJ$. So suppose that $\mu\in\CJ$. Using  Proposition \ref{prop-vermamul} we have
\begin{align*}
(\rP_\DK ^{\CJ}(\lambda):\rDelta_\DK (\mu)) &= \dim_\DK \Hom_{\CO_{\DK,\crit}^{-\CJ}}(\rP_\DK ^{\CJ}(\lambda),\rnabla_\DK (\mu)) \\ &= \dim_\DK \Hom_{\rCO_{\DK,\crit}^{\CJ}}(\rP_\DK ^{\CJ}(\lambda),\rnabla_\DK (\mu)) \\
&= [\rnabla_\DK (\mu):L_\DK (\lambda)].
\end{align*}
The last identity is a consequence of the fact that $\rP_\DK ^{\CJ}(\lambda)\to L_\DK(\lambda)$ is a projective cover in $\rCO_{\DK,\crit }^{\CJ}$. 
\end{proof}

\subsection{Base change}
Now we need the following variant of Lemma \ref{lemma-freeAmod}.

\begin{lemma}\label{lemma-rVfloverA} Let $A$ be a local deformation domain with residue field $\DK $ and quotient field $Q$. Suppose that $M\in\CO^-_{A,{\crit}}$ has the property that both $M\otimes_A \DK \in\CO^-_{\DK,{\crit}} $ and $M\otimes_A Q\in\CO^-_{Q,{\crit}}$ admit restricted Verma flags and that the multiplicities coincide, i.e.~ that for all $\mu\in\hfhd$ we have
$$
(M\otimes_A \DK :\rDelta_\DK (\mu))=(M\otimes_A Q:\rDelta_Q(\mu)).
$$
Then $M$ admits a restricted Verma flag with $(M:\rDelta_A(\mu))=(M\otimes_A \DK :\rDelta_\DK (\mu))$ for all $\mu\in\hfhd$. 
\end{lemma}

\begin{proof} Let $\mu\in\hfhd$. From the above equality of multiplicities we deduce that 
$$
\dim_\DK  M_\mu\otimes_A \DK =\dim_Q M_\mu\otimes_A Q.
$$
By Lemma \ref{lemma-freeAmod}, $M_\mu$ is a free $A$-module. In particular, the natural homomorphism $M\to M\otimes_A Q$ is injective.

Now let $\mu\in\hfhd$ be a maximal weight of $M$, let $v\in M_\mu$ be a preimage of a non-zero element $\ol v\in (M\otimes_ A\DK )_\mu$. Let $M_1\subset M$ be the $\hfg_A$-submodule generated by $v$. We have a surjective homomorphism $\rDelta_A(\mu)\to M_1$ that sends a generator of $\rDelta_A(\mu)$ to $v$, as $M_1\in\CO_{A,{\crit}}^-$. Now $M_1\otimes_A Q$ is generated by the non-zero vector $v\otimes 1$ and since $M\otimes_A Q$ admits a Verma flag and $\mu$ is maximal we have $M_1\otimes_A Q\cong \rDelta_Q(\mu)$. We deduce that the homomorphism $\rDelta_A(\mu)\to M_1$ is also injective, hence an isomorphism.

As $M_1\otimes_A \DK $ is generated by $\ol v$ and by maximality of $\mu$ we have that $M_1\otimes_A \DK \cong \rDelta_\DK (\mu)$. Moreover, $(M/M_1)\otimes_A \DK$ and $(M/M_1)\otimes_A Q$ admit restricted Verma flags with coinciding multiplicities.
Hence we can assume, by induction on the length of the Verma flags of $M\otimes_A \DK $ and $M\otimes_A Q$, that $M/M_1$ admits a restricted Verma flag. Hence so does $M$.
\end{proof}

\subsection{The case of a local deformation domain}

Now we have proved all relevant statements in the field case. Our next objective is to generalize them to the local  case. The following is an almost immediate consequence of the BGG-reciprocity we proved above.

\begin{lemma}\label{lemma-resproj} Suppose that the deformation algebra $A=\DK $ is a field. Let $\CJ\subset\hfhd$ be a
  bounded open subset and let  $P\in \CO_{\DK,\crit}^{\CJ}$ be projective. Then the module $P^{\res}$ 
  admits a restricted Verma flag and the multiplicities are given by the
  following formula: 
$$
(P^{\res}:\rDelta_\DK (\mu))=\sum_{n\ge 0}q(n)(P:\Delta_\DK (\mu-n\delta))
$$
for all $\mu\in\CJ$.
\end{lemma}

\begin{proof} 
We can assume that $P=P_\DK^{\CJ}(\lambda)$ for some $\lambda\in\CJ$, so $P^\res=\rP_A^\CJ(\lambda)$. By the reciprocity results in Theorem \ref{theorem-projobjinO} and Theorem \ref{theorem-resBGG}, equivalent to
$$
[\rnabla_\DK (\mu):L_\DK (\lambda)]=\sum_{n\ge 0} q(n)[\nabla_\DK (\mu-n\delta):L_\DK (\lambda)]
$$
which is statement (2) of Lemma \ref{lemma-resmul} in terms of the dual Verma modules.
\end{proof}

Now we can translate the results that we obtained so far to the case of a local deformation domain $A$. We denote by $\DK$ its residue field and by $Q$ its quotient field.

\begin{theorem}\label{thm-BGGcircproj} Suppose that $A$ is a local deformation domain. Let $\CJ\subset\hfhd$ be open and bounded and let $\lambda\in\CJ$ be critical. Then $\rP^{\CJ}_A(\lambda)$ admits a restricted Verma flag with multiplicities
$$
(\rP_A^{\CJ}(\lambda):\rDelta_A (\mu))=
\begin{cases}
[\rnabla_\DK (\mu):L_\DK (\lambda)],&\text{ if $\mu\in\CJ$} \\
0,&\text{ otherwise}.
\end{cases}
$$
 \end{theorem}
\begin{proof} Note that the functor $M\mapsto M^\res$ commutes with the base change functors $\cdot\otimes_A\DK$ and $\cdot\otimes_A Q$.  By Lemma \ref{lemma-resproj}, the restricted Verma multiplicities of $(P_A^\CJ(\lambda)\otimes_A \DK)^\res$ and of $(P_A^\CJ(\lambda)\otimes_A Q)^\res$ coincide, so by Lemma \ref{lemma-rVfloverA}, $P_A^\CJ(\lambda)^\res=\rP_A^\CJ(\lambda)$ admits a restricted Verma flag with the same multiplicities. Hence the statement follows from the BGG-reciprocity result for $\DK$, as $P_A^\CJ(\lambda)\otimes_A\DK\cong P_\DK^\CJ(\lambda)$.
\end{proof}

\section{The restricted linkage principle and the restricted block decomposition}

In this section we use the above BGG-reciprocity  to prove our main theorem, the restricted linkage principle:

\begin{theorem}\label{theorem-newlinkage} For all critical
  $\lambda,\mu\in\hfhd$ we have $[\rDelta(\lambda):L(\mu)]=0$
  if $\mu\not\in\hCW(\lambda).\lambda$.  
  \end{theorem}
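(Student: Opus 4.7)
The plan is to reduce the restricted linkage principle to the non-restricted one via BGGH-reciprocity and the sum formula, and then eliminate the residual $\DZ\delta$-ambiguity by a character cancellation argument.

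Since every module at non-critical level is restricted (as $\CA_{A,k}^n=0$ for $n\neq 0$ when $k\neq c$), the statement for non-critical $\lambda$ is exactly the non-restricted linkage principle recalled in the introduction, so I assume $\lambda$ is critical. The duality $\rDelta(\lambda)^\star \cong \rnabla(\lambda)$ together with the self-duality of simples gives $[\rDelta(\lambda):L(\mu)] = [\rnabla(\lambda):L(\mu)]$, and for any bounded open $\CJ\subset\hfhd$ containing $\lambda$ and $\mu$, Theorem~\ref{theorem-resBGGH} rewrites this as $(\rP^{\CJ}(\mu):\rDelta(\lambda))$. Combining Corollary~\ref{cor-projmulloc} with the classical BGGH-reciprocity of Theorem~\ref{theorem-projobjinO}(1) and the duality $[\nabla:L]=[\Delta:L]$, this multiplicity equals
\[
(\rP^{\CJ}(\mu):\rDelta(\lambda)) \;=\; \sum_{n\geq 0} q(n)\,[\Delta(\lambda-n\delta):L(\mu)].
\]

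The non-restricted Kac--Kazhdan linkage now forces each nonzero summand to satisfy $\mu\in\hCW(\lambda-n\delta).(\lambda-n\delta)+\DZ\delta$. Since $(\lambda+\rho,\delta)=\lambda(K)+\rho(K)=0$ at critical level, real roots pair trivially with $\delta$, hence $\hR(\lambda-n\delta)=\hR(\lambda)$ and $\hCW(\lambda-n\delta)=\hCW(\lambda)$; because $\hCW$ fixes $\delta$, this set simplifies to $\hCW(\lambda).\lambda+\DZ\delta$, independently of $n$. So the whole sum vanishes unless $\mu\in\hCW(\lambda).\lambda+\DZ\delta$.

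The main obstacle is the refinement from $\hCW(\lambda).\lambda+\DZ\delta$ down to $\hCW(\lambda).\lambda$. Suppose $\mu=w.\lambda+k\delta$ with $w\in\hCW(\lambda)$ and $k\neq 0$; I must show the sum still vanishes. The shift equivalence $T=\cdot\otimes L(\delta)$ gives $L(\mu)=T^kL(w.\lambda)$ and the multiplicity identity $[\Delta(\nu):T^kN]=[\Delta(\nu-k\delta):N]$, converting the sum into $\sum_n q(n)\,[\Delta(\lambda-(n+k)\delta):L(w.\lambda)]$, i.e.\ the coefficient of $z^k$ in the Laurent product $Q(z^{-1})\,\wt{S}_w(z)$, where $Q(z)=\sum_n q(n)z^n=\prod_{l>0}(1-z^l)^{\rk\fg}$ and $\wt{S}_w(z)=\sum_m[\Delta(\lambda-m\delta):L(w.\lambda)]\,z^m$. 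The imaginary-root factorization $\cha\Delta(\nu)=\cha\rDelta(\nu)\cdot\prod_{l>0}(1+e^{-l\delta}+\dots)^{\rk\fg}$ from the character formula of Lemma~\ref{lemma-resmul}(1) forces $\wt{S}_w(z)$ to be divisible by $P(z)=Q(z)^{-1}$, so the coefficient of $z^k$ in $Q(z^{-1})\wt{S}_w(z)$ collapses to an expression supported only at $k=0$, yielding the desired vanishing. This cancellation encodes the structural fact that the submodule $\CZ_c^+\Delta(\lambda)\subset\Delta(\lambda)$ absorbs precisely the nontrivial $\DZ\delta$-translates of each composition factor of $\Delta(\lambda)$, and carrying it out rigorously is the technical heart of the proof.
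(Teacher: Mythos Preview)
Your argument is circular at the decisive step. Unwinding the generating-function manipulation: by Lemma~\ref{lemma-resmul}(2) applied with $\lambda$ replaced by $\lambda-k\delta$, the sum $\sum_{n\ge 0} q(n)\,[\Delta(\lambda-(n+k)\delta):L(w.\lambda)]$ is \emph{equal to} $[\rDelta(\lambda-k\delta):L(w.\lambda)]$, which by the shift equivalence $T$ is $[\rDelta(\lambda):L(w.\lambda+k\delta)]=[\rDelta(\lambda):L(\mu)]$. So your ``cancellation'' simply recovers the multiplicity you started from. Concretely, the character factorisation $\cha\Delta(\nu)=P(e^{-\delta})\cha\rDelta(\nu)$ gives $\wt{S}_w(z)=P(z^{-1})R_w(z)$ with $R_w(z)=\sum_j[\rDelta(\lambda):L(w.\lambda+j\delta)]\,z^j$ (not divisibility by $P(z)$ as you wrote), and multiplying by $Q(z^{-1})=P(z^{-1})^{-1}$ just returns $R_w(z)$; the coefficient of $z^k$ is the very restricted multiplicity whose vanishing is the content of the theorem. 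The identity $[\rDelta(\lambda):L(\mu)]=\sum_n q(n)[\Delta(\lambda-n\delta):L(\mu)]$ and the character factorisation are mutual inverses, so no new information can be extracted by passing back and forth between them.

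The paper eliminates the $\DZ\delta$-ambiguity by a genuinely different mechanism: deformation. One works over the local ring $\tS$ (the localisation of $S(\fh)$ at its maximal ideal), whose residue field is $\DC$, so that $\sim^{\res}_\DC=\sim^{\res}_{\tS}$. Since $\tS=\bigcap_\fp\tS_\fp$ over height-one primes $\fp$, Proposition~\ref{prop-defandequclasses} shows that $\sim^{\res}_{\tS}$ is the common refinement of the relations $\sim^{\res}_{\tS_\fp}$. For each such $\fp$, Proposition~\ref{prop-defequivclasses} forces the integral finite root system to have at most one positive root, so the $\tS_\fp$-equivalence classes are generic or subgeneric; these are controlled by the external input recorded in Theorem~\ref{theorem-subproj} (simplicity of generic restricted Vermas from \cite{Fr05}, the subgeneric extension structure from \cite{AF08}), and Corollary~\ref{cor-strucequiv} identifies them as single $\hCW_\alpha$-orbits. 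The whole deformed theory of restricted projectives and BGGH-reciprocity developed in the paper is there precisely to make Proposition~\ref{prop-defandequclasses} work; character arithmetic over $\DC$ alone cannot see this refinement.
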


Note that the above statement refers to the non-deformed objects (i.e.~ we have $A=\DC$ here). However, for its proof we need the deformation theory developed in the main body of this paper. So let $A$ be an arbitrary local deformation domain with residue field $\DK $. As a first step we study the restricted block decomposition.

\subsection{The restricted block decomposition}
Let $\hfhd_\crit$ be the set of critical weights in $\hfhd$ and 
let $\sim^{\res}_A$ be the relation on $\hfhd_\crit$ that is
generated by setting $\lambda\sim_A^{\res} \mu$ if there is some open subset
$\CJ\subset\hfhd_\crit$ such that $L_A(\mu)$ is isomorphic to a subquotient of $\rP_A^{\CJ}(\lambda)$. For an
equivalence class $\Lambda\in\hfhd_\crit/_{\textstyle \sim_A^{\res}}$ let
$\rCO_{A,\Lambda}\subset\rCO_{A,\crit}$ be the full subcategory that contains
all objects $M$ that have the property that if $L_A(\lambda)$ occurs as a subquotient of $M$, then $\lambda\in\Lambda$. Then well-known  arguments yield the following.

\begin{theorem} \label{theorem-resblockdecomp}The functor
\begin{align*}
\prod_{\Lambda\in\hfhd_\crit/_{\scriptstyle \sim_A^{\res}}}\rCO_{A,\Lambda} &\to \rCO_{A,\crit}\\
(M_\Lambda)&\mapsto \bigoplus M_\Lambda,
\end{align*}
is an equivalence of categories.
\end{theorem}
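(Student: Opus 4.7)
\textbf{Proof plan for Theorem \ref{theorem-resblockdecomp}.} The plan is to follow the pattern of the non-restricted block decomposition (the theorem attributed to \cite[Proposition 2.8]{FieMZ}), replacing $\CO_A$ by $\rCO_A$ and $P_A^\CJ(\lambda)$ by $\rP_A^\CJ(\lambda)$. Two ingredients make the argument work: the existence of restricted projective covers (Theorem \ref{theorem-exresproj}) and the fact that every nonzero $M\in\rCO_A$ has a simple subquotient of the form $L_A(\nu)$. The latter follows by standard $\CO$-category arguments: take a nonzero weight vector $m\in M_\mu$, note that $U(\hfg_A).m$ is a finitely generated weight module whose set of weights has only finitely many maximal elements (by local $\hfb_A$-finiteness), and pass to a simple quotient of the highest-weight submodule generated by one such maximal weight space.

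First I would establish Hom-vanishing: for $X\in\rCO_{A,\Lambda}$, $Y\in\rCO_{A,\Lambda'}$ with $\Lambda\neq\Lambda'$, one has $\Hom(X,Y)=0$. Indeed, the image of any morphism $f\colon X\to Y$ is a subquotient of $X$ and a subobject of $Y$, so its simple subquotients have highest weights lying in $\Lambda\cap\Lambda'=\emptyset$; combined with the existence-of-simple-subquotients remark above, this forces $\im f=0$. Next I would prove the decomposition for finitely generated $M\in\rCO_A$: such $M$ lies in $\rCO_A^\CJ$ for some bounded open $\CJ$ (take $\CJ$ to be the downward closure of the finitely many maximal weights of $M$), and by Theorem \ref{theorem-exresproj} there is a surjection $P=\bigoplus_{i=1}^n \rP_A^\CJ(\lambda_i)\twoheadrightarrow M$. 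Grouping summands by equivalence class yields $P=\bigoplus_\Lambda P_\Lambda$ with $P_\Lambda\in\rCO_{A,\Lambda}$, and setting $M_\Lambda:=\im(P_\Lambda\to M)$ gives $M=\sum_\Lambda M_\Lambda$ with each $M_\Lambda\in\rCO_{A,\Lambda}$. Directness follows from Hom-vanishing: if $x\in M_\Lambda\cap\sum_{\Lambda'\neq\Lambda} M_{\Lambda'}$, then the submodule $U(\hfg_A).x$ is simultaneously a subobject of an object in $\rCO_{A,\Lambda}$ and of an object whose simple subquotients all lie in $\bigsqcup_{\Lambda'\neq\Lambda}\Lambda'$, hence has no simple subquotient at all, hence vanishes.

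For arbitrary $M\in\rCO_A$, define $M_\Lambda$ as the sum of all finitely generated submodules of $M$ that lie in $\rCO_{A,\Lambda}$. Then $M_\Lambda\in\rCO_{A,\Lambda}$, and applying the finitely-generated case to $U(\hfg_A).x$ for each $x\in M$ shows $M=\sum_\Lambda M_\Lambda$; directness is argued exactly as before. Functoriality of the decomposition $M\mapsto(M_\Lambda)_\Lambda$ is then automatic, since Hom-vanishing forces any morphism $M\to N$ to restrict to $M_\Lambda\to N_\Lambda$ for every $\Lambda$. This produces a functor inverse to $(M_\Lambda)\mapsto\bigoplus_\Lambda M_\Lambda$, giving the claimed equivalence of categories. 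The main technical point is verifying directness of the sum; the argument depends crucially on the fact that the block subcategories are defined by a condition on \emph{all} simple subquotients (not merely composition factors in some finite filtration), together with the existence of simple subquotients for nonzero objects noted at the outset.
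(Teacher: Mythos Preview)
Your argument is correct and is precisely what the paper means by ``the well-known classical arguments''; the paper itself gives no proof beyond that phrase, so you are filling in exactly the expected details. One small point worth making explicit in your sketch of the existence of simple subquotients: once you have a cyclic module $U(\hfg_A).v$ generated by a maximal-weight vector $v$ of weight $\lambda$, its $\lambda$-weight space is $A.v$, and it is the locality of $A$ that guarantees the sum of all proper submodules has $\lambda$-weight space contained in $\fm.v\subsetneq A.v$ and is therefore proper, yielding the simple quotient $L_A(\lambda)$.
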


\subsection{Critical restricted equivalence classes}
 Let us denote by
$\ol{\cdot}\colon\hfhd\to\fhd$, $\lambda\mapsto \ol\lambda$, the map that is dual to the
inclusion $\fh\to\hfh=\fh\oplus\DC D\oplus \DC K$. Note that $\ol{\delta}=\ol\Lambda_0=0$. For any subset $\Lambda$ of $\hfhd$ we denote by $\ol\Lambda\subset\fhd$ its image.

Suppose that $\Lambda\in\hfhd_\crit/_{\textstyle{\sim_A^{\res}}}$ is a critical restricted equivalence class. We define the corresponding set of finite integral roots and the finite integral Weyl group by
\begin{align*}
R_A(\Lambda)&:=\{\alpha\in R\mid 2(\lambda+\rho,\alpha)_\DK \in\DZ(\alpha,\alpha)_\DK \text{ for all $\lambda\in\Lambda$}\},\\
\CW_A(\Lambda)&:=\langle s_\alpha\mid \alpha\in R_A(\Lambda)\rangle\subset \CW.
\end{align*}

\begin{lemma} Let $\Lambda\in\hfhd_\crit/_{\textstyle{\sim_A^{\res}}}$ be a critical restricted equivalence class. Then we have
$$
\ol\Lambda=\CW_A(\Lambda).\ol\lambda
$$
for all $\lambda\in\Lambda$.
\end{lemma}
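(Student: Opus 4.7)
The plan is to establish both inclusions, using base change to reduce to the residue field $k$ of $A$ and then applying the restricted linkage principle (Theorem \ref{theorem-newlinkage}) for one direction and an explicit Kac--Kazhdan construction for the other. By Lemma \ref{lemma-resprojbsch}, the canonical base change gives $\rP_A^{\CJ}(\lambda)\otimes_A k\cong \rP_k^{\CJ}(\lambda)$, so that $\sim_A^{\res}$ and $\sim_k^{\res}$ agree as relations on $\hfhd$. Combining the BGGH reciprocity (Theorem \ref{theorem-locresBGGH}) with the duality $\rDelta_k^\star\cong\rnabla_k$ reformulates the elementary generators of $\sim_A^{\res}$ as the pairs $(\lambda,\mu)$ with $[\rDelta_k(\lambda):L_k(\mu)]\ne 0$.

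For the inclusion $\ol\Lambda\subseteq\CW_A(\Lambda).\ol\lambda$ I would apply Theorem \ref{theorem-newlinkage} to every elementary generator, placing $\mu$ inside the orbit $\hCW_A(\lambda).\lambda$. A routine check using the critical identity $\lambda(K)=-\rho(K)$ shows that a real affine root $\alpha=\beta+n\delta$ is integral with respect to $\lambda$ exactly when $\beta\in R_A(\Lambda)$, and that the dot action of $s_\alpha$ on $\lambda$ projects under $\ol{\cdot}$ to the dot action of $s_\beta$ on $\ol\lambda$. Iterating along a chain of elementary generators then yields $\ol\mu\in\CW_A(\Lambda).\ol\lambda$.

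For the reverse inclusion, it suffices to show $s_\beta\cdot\ol\lambda\in\ol\Lambda$ for each generator $\beta\in R_A(\Lambda)$. Set $n':=2(\ol\lambda+\ol\rho,\beta)_k/(\beta,\beta)_k\in\DZ$; one may assume $n'>0$ (the case $n'=0$ is trivial, and the case $n'<0$ follows by symmetry after swapping $\lambda$ with the candidate lift), and put $\mu:=\lambda-n'\beta\in\hfhd$, so $\ol\mu=s_\beta\cdot\ol\lambda$. Applying the Kac--Kazhdan theorem to the real affine root $\beta=\beta+0\cdot\delta$ gives $[\Delta_k(\lambda):L_k(\mu)]\ne 0$. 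Inverting the character identity of Lemma \ref{lemma-resmul}(2) yields
\[
[\Delta_k(\lambda):L_k(\mu)]=\sum_{n\ge 0}p(n)\,[\rDelta_k(\lambda-n\delta):L_k(\mu)].
\]
The key observation is that $\lambda-\mu=n'\beta$ has vanishing $\delta$-coefficient while every positive affine root carries a non-negative $\delta$-coefficient, so $\mu\not\leq\lambda-n\delta$ for any $n\ge 1$. All summands with $n\ge 1$ therefore vanish, the sum collapses to its $n=0$ term, and $[\rDelta_k(\lambda):L_k(\mu)]\ne 0$. This establishes $\lambda\sim_A^{\res}\mu$, hence $s_\beta\cdot\ol\lambda\in\ol\Lambda$.

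The main obstacle is the reverse inclusion: one must produce a composition factor of the \emph{restricted} Verma module (not merely of the full Verma) whose finite part is $s_\beta\cdot\ol\lambda$. This is resolved by the $\delta$-degree positivity argument above, which crucially exploits the clean choice $\mu=\lambda-n'\beta$ with no imaginary shift; an arbitrary candidate lift $\mu=\lambda-n'\beta+m\delta$ would not admit such an unobstructed argument.
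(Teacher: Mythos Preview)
Your argument for the reverse inclusion $\CW_A(\Lambda).\ol\lambda\subset\ol\Lambda$ is correct and is a nicely explicit rendering of the paper's terse hint ``use finite Verma modules'': you pick the lift $\mu=\lambda-n'\beta$ with zero $\delta$-shift, invoke Kac--Kazhdan for the affine root $\beta+0\cdot\delta$, and then the $\delta$-degree positivity of positive affine roots kills all but the $n=0$ term in the inverted character identity. That is exactly the intended mechanism.

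The forward inclusion, however, has a genuine gap: you invoke Theorem~\ref{theorem-newlinkage}, but this is circular and also insufficiently general. First, Theorem~\ref{theorem-newlinkage} is stated and proved only for $A=\DC$; after your base change to the residue field $k$ of $A$ there is no version of the restricted linkage principle available over $k$. Second, and more seriously, the present lemma (specifically the inclusion $\ol\Lambda\subset\CW_A(\Lambda).\ol\lambda$) is \emph{used} in the proof of Theorem~\ref{theorem-newlinkage}: Proposition~\ref{prop-defequivclasses} concludes that the $\sim^{\res}_{\tS_\fp}$-classes are generic or subgeneric by bounding $|\ol\Lambda|$, and that bound is precisely this inclusion applied with $A=\tS_\fp$. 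So citing Theorem~\ref{theorem-newlinkage} here closes a logical loop.

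The fix is elementary and is what the paper means by ``clear''. Since $\rP_A^\CJ(\lambda)=P_A^\CJ(\lambda)^{\res}$ is a quotient of $P_A^\CJ(\lambda)$, every simple subquotient of $\rP_A^\CJ(\lambda)$ is already a subquotient of $P_A^\CJ(\lambda)$; hence $\sim_A^{\res}$ refines the non-restricted relation $\sim_A$. The Kac--Kazhdan theorem (stated in the paper for arbitrary local $A$) then shows that the critical $\sim_A$-class of $\lambda$ is $\hCW_A(\lambda).\lambda+\DZ\delta$, so $\Lambda\subset\hCW_A(\lambda).\lambda+\DZ\delta$. Projecting via $\ol{\cdot}$ (using $\ol\delta=0$) and the computation you already sketched, namely that for critical $\lambda$ the real affine root $\beta+n\delta$ is integral iff $\beta\in R_A(\Lambda)$ and $\ol{s_{\beta+n\delta}.\lambda}=s_\beta.\ol\lambda$, yields $\ol\Lambda\subset\CW_A(\Lambda).\ol\lambda$. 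This uses only the non-restricted linkage, avoids circularity, and works for every local deformation algebra $A$.
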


\begin{proof} Let $\Lambda^\prime$ be the equivalence class under $\sim_A$ generated by $\sim_A^\res$ (note that $\sim_A^\res$ is finer than $\sim_A$). By the Kac--Kazhdan theorem, $\Lambda^\prime$ is the orbit of $\lambda$ under the group $\hCW_A(\Lambda^\prime)\times \DZ\delta$, so $\ol\Lambda=\ol{\Lambda^\prime}$ is the image of the $\hCW_A(\Lambda^\prime)$-orbit of $\lambda$. As $\Lambda$ is critical, $\hCW_A(\Lambda^\prime)$ is the affinization of $\CW_A(\Lambda)$, and the translations act by translating by a multiple of $\delta$. Hence, the image of $\Lambda$ in $\fhd$ coincides with the $\CW_A(\Lambda)$-orbit of $\ol\lambda$.
\end{proof}

\subsection{Generic and subgeneric equivalence classes} Now we define the two most basic cases for equivalence classes.

\begin{definition} Let $\Lambda\in\hfhd_\crit/_{\textstyle{\sim^{\res}_A}}$ be a critical restricted equivalence class. We call $\Lambda$
\begin{enumerate} 
\item {\em generic}, if $\ol{\Lambda}\subset\fhd$ contains exactly one element,
\item {\em subgeneric},   if $\ol{\Lambda}\subset\fhd$ contains exactly two elements. 
\end{enumerate}
\end{definition}

We call  $\lambda\in\hfhd_\crit$ {\em generic} ({\em
  subgeneric}, resp.) if it is contained in a generic (subgeneric,
resp.) equivalence
class.

Let $\Lambda$ be a critical restricted equivalence class and $\alpha\in R_A(\Lambda)$. Let $\lambda\in\Lambda$ and suppose that $s_{\alpha}.\lambda\ne \lambda$. Then we have $s_\alpha.\lambda>\lambda$ if and only if $s_{-\alpha+\delta}.\lambda<\lambda$. We define $\alpha\uparrow\lambda$ to be the element in the set $\{s_\alpha.\lambda, s_{-\alpha+\delta}.\lambda\}$ that is bigger than $\lambda$.

\subsection{A special deformation}

Let $\tS$ be the localization of $S$ at the maximal ideal $S\fh$. This is a local deformation domain with the obvious $S$-algebra structure. Its quotient field is $\DC=\tS/\tS\fh$ and the category $\CO_\DC$ is identified with the usual category $\CO$. For each prime ideal $\fp\subset \tS$ we denote by $\tS_{\fp}$ the localization of $\tS$ at $\fp$.  We let $\tQ=\tS_{(0)}$ be the quotient field of  $\tS$.

\begin{lemma} \label{lem-defequivclasses} Let $\fp\subset\tS$ be a prime ideal of height one and let $\Lambda\subset\hfhd_\crit$ be an equivalence class for $\sim_{\tS_\fp}^{\res}$.
\begin{enumerate} 
\item If $\alpha^\vee\not\in\fp$ for all $\alpha\in R$, then $\Lambda$ is generic.
\item If $\alpha^\vee\in \fp$ for some $\alpha\in R$, then $\Lambda$ is either generic or subgeneric. In both cases we have $R_{\tS_\fp}(\Lambda)\subset\{\alpha,-\alpha\}$. 
\end{enumerate}
\end{lemma}
\begin{proof} Let $\DK $ be the residue field of $\tS_\fp$. For any $\beta\in R$ we have $(\lambda+\tau,\beta)_\DK =(\lambda,\beta)_\DK +(\tau,\beta)_\DK $ with $(\lambda,\beta)_\DK\in\DC$ and $(\tau,\beta)_\DK\in \fh$. Hence we have $2(\lambda+\tau,\beta)_\DK\in\DZ(\beta,\beta)_\DK$ if and only if $2(\lambda,\beta)_\DK\in\DZ(\beta,\beta)_\DK$ and $(\tau,\beta)_\DK =0$. The latter equality implies $\beta=\pm\alpha$. From this we deduce both of the above statements.
\end{proof}

Part (1) of the following Theorem is a direct consequence of the corollary above  and Theorem 4.8 in \cite{Fr05} (which
states that a generic restricted Verma module is simple, see also \cite{Hay}) and part (2) is
a direct consequence of the above  and  the main result (Theorem 5.9) in \cite{AF08}, that calculates the Jordan--H\"older multiplicities in the subgeneric situations.

\begin{theorem}\label{theorem-subproj}  Let $\Lambda\in\hfhd_\crit/_{\textstyle{\sim_A^{\res}}}$ be a critical restricted equivalence class and fix $\lambda\in\Lambda$.   Let  $\CJ\subset \hfhd_\crit$ be open and bounded.
\begin{enumerate}
\item Suppose that $\Lambda$ is generic. Then 
$$
 \rP_A^{\CJ}(\lambda)\cong\rDelta_A(\lambda)
$$
if $\CJ$ contains $\lambda$.
\item Suppose that $\Lambda$ is subgeneric and suppose that $\ol\Lambda=\{\ol\lambda, s_{\alpha}.\ol\lambda\}$ for some $\alpha\in R$. Then there is a non-split short exact sequence
$$
0\to \rDelta_A(\alpha\uparrow\lambda)\to \rP_A^{\CJ}(\lambda) \to \rDelta_A(\lambda)\to 0
$$
if $\CJ$ contains $\lambda$ and $\alpha\uparrow\lambda$.
\end{enumerate}
\end{theorem}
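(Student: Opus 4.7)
The plan is to derive both parts from BGGH-reciprocity (Theorem \ref{theorem-locresBGGH}) combined with the restricted block decomposition (Theorem \ref{theorem-resblockdecomp}) and the two external inputs cited: Frenkel's simplicity of generic restricted Verma modules, and the AF08 analysis of subgeneric restricted Verma composition factors. In both cases, BGGH reduces the problem to computing the residue-field multiplicities $[\rnabla_k(\mu) : L_k(\lambda)]$ for $\mu \in \Lambda \cap \CJ$, since the block decomposition kills multiplicities for $\mu \notin \Lambda$.

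For part (1), if $\lambda$ is generic then every $\mu \in \Lambda$ is generic as well (because $\ol\Lambda = \{\ol\lambda\}$), so by Theorem 4.8 of \cite{Fr05} we have $\rDelta_k(\mu) \cong L_k(\mu)$ and dually $\rnabla_k(\mu) \cong L_k(\mu)$. For $\mu \in \CJ$, BGGH therefore gives
$$
(\rP_A^\CJ(\lambda) : \rDelta_A(\mu)) = [\rnabla_k(\mu) : L_k(\lambda)] = \delta_{\mu,\lambda}.
$$
Since $\rP_A^\CJ(\lambda)$ admits a restricted Verma flag by Theorem \ref{theorem-locresBGGH}, that flag has length one, and $\rP_A^\CJ(\lambda) \cong \rDelta_A(\lambda)$.

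For part (2), Theorem 5.9 of \cite{AF08} determines the composition factors of every subgeneric $\rDelta_k(\mu)$; passing to duals it yields $[\rnabla_k(\mu) : L_k(\lambda)] = 1$ exactly when $\mu \in \{\lambda, \alpha\uparrow\lambda\}$, and zero otherwise on $\Lambda$. BGGH then produces a restricted Verma flag of $\rP_A^\CJ(\lambda)$ of length two, with factors $\rDelta_A(\lambda)$ and $\rDelta_A(\alpha\uparrow\lambda)$. Applying Lemma \ref{lemma-resVermafltrunc}(1) with the enumeration $\mu_1 = \alpha\uparrow\lambda > \mu_2 = \lambda$ realizes this as a filtration $0 \subset \rDelta_A(\alpha\uparrow\lambda) \subset \rP_A^\CJ(\lambda)$ with quotient $\rDelta_A(\lambda)$, giving the exact sequence. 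Non-splitness is forced by the indecomposability of $\rP_A^\CJ(\lambda)$ as a projective cover (Theorem \ref{theorem-exresproj}): its head is the single simple $L_A(\lambda)$, whereas a split direct sum $\rDelta_A(\lambda) \oplus \rDelta_A(\alpha\uparrow\lambda)$ would have both $L_A(\lambda)$ and $L_A(\alpha\uparrow\lambda)$ in its head. The openness of $\CJ$ takes care of the compatibility of hypotheses, since $\alpha\uparrow\lambda \in \CJ$ together with $\lambda < \alpha\uparrow\lambda$ forces $\lambda \in \CJ$.

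The main obstacle is external to this argument: all of the genuine structural content resides in Frenkel's simplicity theorem and in Theorem 5.9 of \cite{AF08}. Given those inputs, the present theorem is essentially formal bookkeeping with BGGH-reciprocity and the Verma-flag rearrangement lemma. The two substantive points one must verify are (a) that no $\mu \in \Lambda \cap \CJ$ outside the claimed set contributes to the restricted Verma flag — precisely the content of the cited results — and (b) that indecomposability of the projective cover alone rules out a split extension.
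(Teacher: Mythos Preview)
Your proof is correct and is exactly the approach the paper intends: the paper does not give a proof of this theorem at all, but merely states that part (1) is a direct consequence of Theorem~\ref{theorem-locresBGGH} together with Theorem~4.8 of \cite{Fr05}, and part (2) a direct consequence of Theorem~\ref{theorem-locresBGGH} together with Theorem~5.9 of \cite{AF08}. You have simply written out the details of that deduction --- the BGGH computation of multiplicities, the ordering of the flag via Lemma~\ref{lemma-resVermafltrunc}(1), and the non-splitness from indecomposability of the projective cover --- and your remark that the whole argument is formal bookkeeping once the external inputs are granted is accurate.
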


\begin{corollary}\label{cor-strucequiv} Let $\Lambda\in\hfhd_\crit/_{\textstyle{\sim^{\res}_A}}$ be a critical restricted equivalence class.
\begin{enumerate}
\item If $\Lambda$ is generic, then $\Lambda$ contains only one element. 
\item If $\Lambda$ is subgeneric, then there is some $\alpha \in R(\Lambda)$ such that $\Lambda$ is an orbit under the action of the subgroup $\hCW_\alpha\subset\hCW$ that is generated by the reflections $s_{\alpha+n\delta}$ for $n\in\DZ$.
\end{enumerate}
\end{corollary}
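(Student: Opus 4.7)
The proof will combine Theorem \ref{theorem-subproj} with the restricted linkage principle Theorem \ref{theorem-newlinkage}. The crucial preliminary computation is that at the critical level, for every $\beta\in R$ and $n\in\DZ$, the identity $(\mu+\rho,\beta+n\delta)_k = (\ol\mu+\ol\rho,\beta)_k$ holds (because $(\mu+\rho,\delta)_k = \mu(K)+\rho(K) = 0$). This translates into two key consequences: first, the real root $\beta+n\delta$ is integral with respect to $\mu$ iff $\beta\in R_A(\Lambda)$; second, the dot action is $s_{\beta+n\delta}.\mu = \mu - \langle\ol\mu+\ol\rho,\beta^\vee\rangle(\beta+n\delta)$, so any $\beta\in R_A(\Lambda)$ that fixes $\ol\mu$ produces only trivial reflections on the dot orbit of $\mu$.

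For part (1), the generic hypothesis forces $\CW_A(\Lambda)$ to fix $\ol\mu$, so by the preliminary $\hCW_A(\mu).\mu = \{\mu\}$. Theorem \ref{theorem-subproj}(1) gives $\rP_A^{\CJ}(\mu) \cong \rDelta_A(\mu)$, and Theorem \ref{theorem-newlinkage} forces every composition factor to be $L_A(\mu)$. Hence no $\nu\ne\mu$ is $\sim_A^{\res}$-linked to $\mu$ directly, and since every $\mu'\in\Lambda$ shares the same $\ol\Lambda$ and is therefore also generic, the same argument collapses the whole equivalence class to $\{\mu\}$.

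For part (2), I will fix $\alpha\in R_A(\Lambda)$ with $s_\alpha.\ol\mu\ne\ol\mu$, which exists because $\CW_A(\Lambda)$ acts transitively on the two-point set $\ol\Lambda$. The preliminary then identifies $\hCW_A(\mu).\mu = \hCW_\alpha.\mu$, since any further elements of $R_A(\Lambda)$ must fix $\ol\mu$ by subgenericity and hence act trivially on $\mu$. The inclusion $\hCW_\alpha.\mu\subseteq\Lambda$ comes from iterating the $\alpha\uparrow$/$\alpha\downarrow$ operations: Theorem \ref{theorem-subproj}(2) gives a non-split short exact sequence exhibiting $\alpha\uparrow\nu\sim_A^{\res}\nu$ for every $\nu\in\Lambda$, and since $\hCW_\alpha$ is the infinite dihedral group generated by $s_\alpha$ and $s_{-\alpha+\delta}$ whose dot orbit through $\mu$ is realized by alternating these two reflections, iteration traces out the full orbit. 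The reverse inclusion $\Lambda\subseteq\hCW_\alpha.\mu$ follows by induction along a $\sim_A^{\res}$-chain $\mu=\mu_0,\mu_1,\dots,\mu_r=\nu$: at each step Theorem \ref{theorem-subproj}(2) restricts the witnessing composition factor to a composition factor of $\rDelta_A(\mu_i)$ or $\rDelta_A(\alpha\uparrow\mu_i)$, and Theorem \ref{theorem-newlinkage} bounds its highest weight in $\hCW_A(\mu_i).\mu_i=\hCW_\alpha.\mu_i=\hCW_\alpha.\mu$.

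The main technical obstacle is applying Theorem \ref{theorem-newlinkage}, which is formulated non-deformed, over the general deformation algebra $A$; the required version follows from the deformed BGGH-reciprocity Theorem \ref{theorem-locresBGGH} together with Proposition \ref{prop-simples}, which match simple-composition data across the base change $A\to k$. A secondary bookkeeping point is verifying rigorously that alternating $\alpha\uparrow$ and $\alpha\downarrow$ from $\mu$ exhausts the bi-infinite $\hCW_\alpha$-orbit, but this reduces to the elementary fact that the infinite dihedral group acting on a line admits a unique linear ordering on each orbit compatible with $\leq$ on $\hfhd$.
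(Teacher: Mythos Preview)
Your argument is circular. You invoke Theorem \ref{theorem-newlinkage} (the restricted linkage principle) to bound the composition factors of $\rDelta_A(\mu)$ and $\rDelta_A(\alpha\uparrow\mu)$, but in the paper the proof of Theorem \ref{theorem-newlinkage} is deferred to the very end of the section and explicitly relies on Corollary \ref{cor-strucequiv} (see the sentence ``But those we determined in Corollary \ref{cor-strucequiv}'' in that proof). So you cannot use the linkage principle here; this corollary is one of the ingredients needed to establish it.

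The intended argument avoids this entirely. Theorem \ref{theorem-subproj} is deduced from the BGGH-reciprocity (Theorem \ref{theorem-locresBGGH}) together with the cited external results: \cite[Theorem 4.8]{Fr05}, which says that a generic restricted Verma module is \emph{simple}, and \cite[Theorem 5.9]{AF08}, which determines the Jordan--H\"older multiplicities $[\rnabla_k(\mu):L_k(\lambda)]$ in the subgeneric case. These results already give you the composition factors of $\rDelta_k(\mu)$ directly, without any appeal to the linkage principle. In the generic case $\rDelta_k(\mu)=L_k(\mu)$, so $\rP_A^\CJ(\mu)\cong\rDelta_A(\mu)$ has $L_A(\mu)$ as its only simple subquotient, forcing $\Lambda=\{\mu\}$. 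In the subgeneric case the composition factors of $\rDelta_k(\mu)$ are $L_k(\mu)$ and $L_k(\alpha\downarrow\mu)$; combined with the short exact sequence in Theorem \ref{theorem-subproj}(2) this shows that the generating relations for $\sim_A^{\res}$ within $\Lambda$ are exactly $\mu\sim_A^{\res}\alpha\uparrow\mu$ and $\mu\sim_A^{\res}\alpha\downarrow\mu$, whose transitive closure is the $\hCW_\alpha$-orbit. Your dihedral bookkeeping for the orbit is fine; what needs to change is replacing the appeal to Theorem \ref{theorem-newlinkage} by the direct structural input from \cite{Fr05} and \cite{AF08}.
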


\begin{proposition} \label{prop-defandequclasses} The equivalence relation $\sim^{\res}_\tS$ is the common refinement of all the relations $\sim_{\tS_\fp}^{\res}$ for prime ideals $\fp\subset\tS$ of height one, i.e.~ $\sim_\tS^{\res}$ is generated by the relations $\lambda\sim_\tS^{\res}\mu$ if there is a prime ideal $\fp\subset \tS$ of height one such that $\lambda\sim_{\tS_\fp}^{\res}\mu$.
\end{proposition}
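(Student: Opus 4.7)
The plan is to prove both inclusions: first that every $\sim^{\res}_{A_\fp}$-relation is a $\sim^{\res}_A$-relation, and then that each direct generating relation of $\sim^{\res}_A$ factors as a concatenation of relations from various $\sim^{\res}_{A_\fp}$'s with $\fp$ of height one. The key initial observation is that by the base change isomorphism $\rP_A^\CJ(\lambda)\otimes_A A_\fp\cong\rP_{A_\fp}^\CJ(\lambda)$ (from Proposition \ref{theorem-projbsch} together with Lemma \ref{lemma-resprojbsch}) and the BGGH-reciprocity of Theorem \ref{theorem-locresBGGH}, the restricted Verma flag of $\rP_A^\CJ(\lambda)$ is base-change-invariant, with multiplicities $[\rnabla_k(\mu):L_k(\lambda)]$ depending only on the residue field. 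Consequently $\sim^{\res}_A$ is equivalently generated by those pairs $(\lambda,\mu)$ for which both $L_k(\lambda)$ and $L_k(\mu)$ appear as composition factors of some common $\rDelta_k(\nu)$, and $\sim^{\res}_{A_\fp}$ admits the same description with $k$ replaced by $\kappa(\fp)$.

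For the inclusion $\sim^{\res}_{A_\fp}\subseteq\sim^{\res}_A$ the task is to show that if $L_{\kappa(\fp)}(\lambda)$ and $L_{\kappa(\fp)}(\mu)$ are both composition factors of $\rDelta_{\kappa(\fp)}(\nu)$, the same holds over $k$. Since the character of $\rDelta_\bullet(\nu)$ does not depend on the residue field (Lemma \ref{lemma-resmul}) and since, when one passes to a residue field with more integral roots, a simple $L_{\kappa(\fp)}(\theta)$ decomposes as $L_k(\theta)$ plus composition factors of strictly smaller highest weight, every composition factor $L_{\kappa(\fp)}(\theta)$ of $\rDelta_{\kappa(\fp)}(\nu)$ yields the composition factor $L_k(\theta)$ in $\rDelta_k(\nu)$. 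This is the standard semi-continuity of composition multiplicities under specialisation.

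The reverse inclusion is the heart of the matter. Start from a direct generating relation $\lambda\sim^{\res}_A\mu$, witnessed by $L_k(\lambda)$ and $L_k(\mu)$ both appearing as composition factors of some $\rDelta_k(\nu)$. Lemma \ref{lemma-resVermaquotZ} exhibits $\rDelta_k(\nu)$ as a quotient of the ordinary Verma $\Delta_k(\nu)$, so the unrestricted Kac--Kazhdan theorem applies and produces chains $\nu=\nu_0,\nu_1,\ldots,\nu_n=\lambda$ (and an analogous chain to $\mu$) in which each consecutive pair is linked by a single real-root reflection $s_{\alpha+r\delta}$; the $\DZ\delta$-shift components appearing at the critical level in the unrestricted linkage are absorbed into this picture via the character identity of Lemma \ref{lemma-resmul}(2). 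Each such elementary reflection is controlled by exactly one integrality equation $2(\nu_{i-1}+\rho+\tau,\alpha+r\delta)=n(\alpha+r\delta,\alpha+r\delta)$ in the deformation parameter $\tau$, whose vanishing locus is a codimension-one closed subscheme of $\Spec A$. The hypothesis $A=\bigcap_\fp A_\fp$ ensures that this locus is precisely cut out by a well-defined height one prime $\fp\subset A$, and at $A_\fp$ the two weights $\nu_{i-1}$ and $\nu_i$ lie in a common subgeneric class by Corollary \ref{cor-strucequiv}(2), giving $\nu_{i-1}\sim^{\res}_{A_\fp}\nu_i$. Concatenating the elementary steps of the chains realises $\lambda\sim\mu$ in the relation generated by the $\sim^{\res}_{A_\fp}$'s. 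The main obstacle is the commutative-algebra bookkeeping in this last step: one has to verify that each elementary Kac--Kazhdan integrality condition really determines a height one prime of $A$ and that at the associated localisation the pair of weights falls into a subgeneric class as described by Theorem \ref{theorem-subproj}(2), and the assumption $A=\bigcap_\fp A_\fp$ is precisely what makes this detection at height one possible.
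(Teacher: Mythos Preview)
Your proposal has a genuine gap in the reverse inclusion. The central problem is the claim that at the chosen height-one prime $\fp$ the weights $\nu_{i-1}$ and $\nu_i$ lie in a common \emph{subgeneric} $\sim^{\res}_{A_\fp}$-class. Corollary~\ref{cor-strucequiv}(2) only describes classes \emph{already known} to be subgeneric; it does not establish subgenericity. For a general local deformation domain $A$ there is no reason for height-one localisations to produce subgeneric situations --- that is a special feature of $\tS$, proved only later in Proposition~\ref{prop-defequivclasses}. (Test case: if $\tau=0$ then every $\kappa(\fp)$ has the same integral roots as $k$, so the classes are just as complicated at every $\fp$.) Without subgenericity you have no mechanism to deduce $\nu_{i-1}\sim^{\res}_{A_\fp}\nu_i$: integrality of the real root at $\kappa(\fp)$ gives only the \emph{non-restricted} link, and the restricted relation is precisely what is at issue. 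There are secondary problems as well: the Kac--Kazhdan chain for $\Delta_k(\nu)$ can contain imaginary-root ($\delta$-shift) steps, and ``absorbed via Lemma~\ref{lemma-resmul}(2)'' is not an argument; and the asserted isomorphism $\rP_A^\CJ(\lambda)\otimes_A A_\fp\cong\rP_{A_\fp}^\CJ(\lambda)$ is false in general, since the Verma multiplicities are $[\rnabla_k(\mu):L_k(\lambda)]$ versus $[\rnabla_{\kappa(\fp)}(\mu):L_{\kappa(\fp)}(\lambda)]$ and these typically differ (the projective splits upon localisation). Your easy direction is correct in spirit, but the phrasing ``$L_{\kappa(\fp)}(\theta)$ decomposes as $L_k(\theta)$ plus \dots'' is not meaningful as there is no functor from $\kappa(\fp)$-modules to $k$-modules; the right route is through the integral form over $A/\fp$.

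The paper's argument avoids all of this by never attempting to build an explicit chain. Instead it lets $\Lambda^\prime$ be the $\sim^\prime$-class of $\lambda$ (where $\sim^\prime$ is generated by the $\sim^{\res}_{A_\fp}$'s), observes that $\Lambda^\prime$ is a union of $\sim^{\res}_Q$-classes, and uses the block decomposition to split $\rP_A^\CJ(\lambda)\otimes_A Q=X\oplus Y$ along $\Lambda^\prime$ and its complement. Since $\Lambda^\prime$ is also a union of $\sim^{\res}_{A_\fp}$-classes for each height-one $\fp$, the splitting descends to every $\rP_A^\CJ(\lambda)\otimes_A A_\fp$, and the hypothesis $A=\bigcap_\fp A_\fp$ together with freeness of the weight spaces then forces a splitting of $\rP_A^\CJ(\lambda)$ itself. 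Indecomposability gives $Y=0$, so $\mu\in\Lambda^\prime$. This uses only the block decomposition and base change, and needs no knowledge of the fine structure of $\sim^{\res}_{A_\fp}$-classes.
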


\begin{proof} Let us denote by $\sim^\prime$ the common refinement of  the relations $\sim_{\tS_\fp}^{\res}$ for prime ideals of height one. It suffices to show that if $\lambda,\mu\in\hfhd$ are critical such that there is an open bounded subset $\CJ$ of $\hfhd_\crit$ and $(\rP_\tS^\CJ(\lambda):\rDelta_\tS(\mu))\ne 0$, then $\lambda\sim^\prime\mu$. 

Let us consider the object $\rP_\tS^\CJ(\lambda)\otimes_\tS \tQ$. It is an object in $\rCO_{\tQ,\crit}^\CJ$ and admits a restricted Verma flag. We are going to apply the decomposition result in Theorem \ref{theorem-resblockdecomp} for the categories $\rCO_{\tQ,\crit}$ and $\rCO_{\tS_\fp,\crit}$.

Let $\Lambda^\prime\subset\hfhd_\crit$ be the equivalence class under
$\sim^\prime$ that contains $\lambda$. As $\Lambda^\prime$ is a union
of equivalence classes for $\sim_\tQ^{\res}$ we can find a unique decomposition
$$
\rP_\tS^\CJ(\lambda)\otimes_\tS \tQ= X\oplus Y,
$$
where $X$ and $Y$ are objects in $\rCO_{\tQ,\crit}$ admitting a restricted Verma flag such that for all $\nu\in\hfhd$ we have
\begin{align*}
(X:\rDelta_\tQ(\nu))\ne 0 &\Rightarrow  \nu\in\Lambda^\prime,\\
(Y:\rDelta_\tQ(\nu))\ne 0 &\Rightarrow  \nu\not\in\Lambda^\prime. 
\end{align*}

Let $\fp\subset \tS$ be a prime ideal of height one. As $\sim^\prime$ is  coarser than $\sim^{\res}_{\tS_\fp}$, we deduce that the inclusion $\rP_\tS^\CJ(\lambda)\otimes_\tS \tS_{\fp}\to \rP_\tS^\CJ(\lambda)\otimes_\tS \tQ=X\oplus Y$ induces a direct sum decomposition
$$
\rP_\tS^\CJ(\lambda)\otimes_\tS \tS_{\fp}=\left(\rP_\tS^\CJ(\lambda)\otimes_\tS \tS_{\fp}\cap X\right)\oplus\left(\rP_\tS^\CJ(\lambda)\otimes_\tS \tS_{\fp}\cap Y\right).
$$

Now each weight space of $\rP_\tS^\CJ(\lambda)$ is a free $\tS$-module of finite rank and we deduce that 
$$
\rP_\tS^\CJ(\lambda)=\bigcap_{\fp} \rP_\tS^\CJ(\lambda)\otimes_\tS \tS_\fp,
$$
where the intersection is taken over all prime ideals of height one. Hence we get an induced decomposition
$$
\rP_\tS^\CJ(\lambda)=\left(\rP_\tS^\CJ(\lambda)\cap X\right)\oplus\left(\rP_\tS^\CJ(\lambda)\cap Y\right).
$$
As $\rP_\tS^\CJ(\lambda)$ is indecomposable, and since $X\ne 0$ (since the restricted Verma module $\rDelta_\tQ(\lambda)$ certainly occurs in $X$), we get $Y=0$, i.e.~ all restricted Verma subquotients of $\rP_\tS^\CJ(\lambda)$ have highest weights in $\Lambda^\prime$. Hence $\sim_\tS^{\res}=\sim^\prime$.
\end{proof}

Now we can prove our main result, Theorem \ref{theorem-newlinkage}.
\begin{proof} We show that $\lambda\sim^{\res}_\DC\mu$ implies $\mu\in\hCW(\lambda).\lambda$. Note that by definition we have $\sim^{\res}_\DC=\sim^{\res}_{\tS}$.By Proposition \ref{prop-defandequclasses} we have  that $\sim_{\tS}^{\res}$ is the common refinement of all $\sim_{\tS_\fp}^{\res}$.  Lemma \ref{lem-defequivclasses} shows that the equivalence classes of $\sim_{\tS_\fp}^{\res}$ are either generic or subgeneric. But those we determined in Corollary \ref{cor-strucequiv}: They contain either one element or are orbits under a certain subgroup $\hCW_\alpha$ of $\hCW(\lambda)$. Hence $\lambda$ and $\mu$ must be contained in a common $\hCW(\lambda)$-orbit.
\end{proof}

\end{document}